%%%%%%	PREAMBLE	%%%%%

%%% declare %%%

\documentclass[a4paper,12pt]{article}

\usepackage{amscd}
\usepackage{amssymb,amsfonts,amsmath,amsthm}
\usepackage{verbatim}

\usepackage{color}
\usepackage{amsmath}
\usepackage{amsthm}
\usepackage{amssymb}

\usepackage{latexsym}
\usepackage{array}
\usepackage{enumerate}

\pagestyle{plain}

\setlength{\topmargin}{-45pt}
\setlength{\oddsidemargin}{0cm}
\setlength{\evensidemargin}{0cm}
\setlength{\textheight}{23.5cm}
\setlength{\textwidth}{16cm}

\numberwithin{equation}{section}

%%%%%% ito's declare %%%%%
\usepackage{amsmath,amssymb,mathrsfs,amsthm}
\usepackage[all]{xy}
\usepackage{graphicx}
\usepackage{ulem}
\usepackage{ascmac}
\usepackage{mathtools}

%%% macro %%%

\newcommand{\BDC}{{\mathbf{D}}^{\mathrm{b}}}

\newcommand{\Mod}{\mathrm{Mod}}
\newcommand{\Hom}{\mathrm{Hom}}

\newcommand{\CC}{\mathbb{C}}

\newcommand{\RR}{\mathbb{R}}

\newcommand{\ZZ}{\mathbb{Z}}
\newcommand{\D}{\mathcal{D}}

\newcommand{\M}{\mathcal{M}}

\renewcommand{\(}{\left(}
\renewcommand{\)}{\right)}

\newcommand{\an}{{\rm an}}

\newcommand{\id}{{\rm id}}

\newcommand{\Sol}{{\rm Sol}}

\newcommand{\sub}{{\rm sub}}

\newcommand{\tl}[1]{\widetilde{#1}}

\newcommand{\simto}{\overset{\sim}{\longrightarrow}}

%%%%%%%%%%% itoh's macro %%%%%%%%%%%
\newcommand{\op}{{\mbox{\scriptsize op}}}
\newcommand{\SD}{\mathcal{D}}

\newcommand{\SO}{\mathcal{O}}
\newcommand{\Ob}{\SO b}

\newcommand{\SF}{\mathcal{F}}

\newcommand{\SH}{\mathcal{H}}
\newcommand{\calI}{\mathcal{I}}

\newcommand{\Op}{\SO p}

\newcommand{\BDChol}{{\mathbf{D}}^{\mathrm{b}}_{\mbox{\rm \scriptsize hol}}}
\newcommand{\BDCrh}{{\mathbf{D}}^{\mathrm{b}}_{\mbox{\rm \scriptsize rh}}}

\newcommand{\Potimes}{\overset{+}{\otimes}}

\newcommand{\rhom}{{\bfR}{\mathcal{H}}om}
\newcommand{\rihom}{{\bfR}{\mathcal{I}}hom}

\newcommand{\Prihomsub}{{\bfR}{\mathcal{I}}hom^{+, \sub}}

\newcommand{\I}{{\rm I}}
\newcommand{\che}[1]{\check{#1}}
\newcommand{\var}[1]{\overline{#1}}
\newcommand{\BEC}{{\mathbf{E}}^{\mathrm{b}}}

\newcommand{\ZEC}{{\mathbf{E}}^{\mathrm{0}}}

\newcommand{\Q}{\mathbf{Q}}
\newcommand{\q}{\mathbf{q}}
\newcommand{\bfr}{\mathbf{r}}
\newcommand{\bfl}{\mathbf{l}}
\newcommand{\EE}{\mathbb{E}}
 
\newcommand{\bs}{\backslash}

\newcommand{\T}{{\mathsf{T}}}
\newcommand{\bfR}{\mathbf{R}}
\newcommand{\bfL}{\mathbf{L}}
\newcommand{\bfD}{\mathbf{D}}

\newcommand{\rmR}{{\rm R}}
\newcommand{\rmE}{{\rm E}}

\newcommand{\bfE}{\mathbf{E}}

\renewcommand{\Re}{\operatorname{Re}}

\newcommand{\ord}{\operatorname{ord}}

\newcommand{\AC}{(\mathbf{AC})}
\newcommand{\ACsub}{(\mathbf{AC})_{\rm sub}}
\newcommand{\rami}{{\rm rm}}
\newcommand{\modi}{{\rm md}}
\newcommand{\blow}{{\rm bl}}

%%% theorem style %%%

\newtheorem{theorem}{Theorem}[section]

\newtheorem{corollary}[theorem]{Corollary}
\newtheorem{lemma}[theorem]{Lemma}

\newtheorem{proposition}[theorem]{Proposition}

\theoremstyle{definition}
\newtheorem{definition}[theorem]{Definition}
\theoremstyle{remark}

%\theoremstyle{remark}

%\theoremstyle{remark}

%\theoremstyle{question}

%\theoremstyle{remark}

%\theoremstyle{remark}

%%%%%    TEXT START    %%%%%
\title{Irregular Riemann--Hilbert Correspondence and Enhanced Subanalytic Sheaves\footnote{{\bf 2020 Mathematics 
Subject Classification: }18F10, 32C38, 35Q15, 32S60}}

\author{Yohei ITO\footnote{Department of Mathematics, Faculty of Science Division II, Tokyo University of Science, 1-3, Kagurazaka, Shinjuku-ku, Tokyo, 162-8601, Japan. E-mail: yitoh@rs.tus.ac.jp }}

\date{}

\sloppy

\begin{document}
\maketitle

\begin{abstract}
In \cite{Ito21b}, the author explained a relation between enhanced ind-sheaves and enhanced subanalytic sheaves.
In this paper, we shall define $\CC$-constructability for enhanced subanalytic sheaves 
which was announced in \cite[Rem.\:3.42]{Ito21b},
and show that there exists an equivalence of categories
between the triangulated category of $\CC$-constructible enhanced subanalytic sheaves
and the one of holonomic $\D$-modules.
%Although it may be known by experts, it is not in the literature to our knowledge.
\end{abstract}

\section{Introduction}
The original Riemann--Hilbert problem asks for the existence of a linear ordinary differential equation with regular singularities and a given monodromy on a curve.
In \cite{Del}, P.\:Deligne formulated it as a correspondence between integrable connections with regular singularities on a complex manifold $X$ with a pole on a hypersurface $Y$ and local systems on $X\setminus Y$.

In 1984, M.\:Kashiwara extended such a correspondence
as an equivalence of categories between
the triangulated category $\BDCrh(\D_X)$ of regular holonomic $\D_X$-modules on a complex manifold $X$
and the one $\BDC_{\CC\mbox{\scriptsize -}c}(\CC_X)$ of $\CC$-constructible sheaves on $X$.
\begin{theorem}[{\cite[Main Theorem]{Kas84}}]
There exists an equivalence of triangulated categories:
$$\Sol_X\colon \BDCrh(\D_X)^\op\simto \BDC_{\CC\mbox{\scriptsize -}c}(\CC_X),\
\M\mapsto \Sol_X(\M) := \rhom_{\D_X}(\M, \SO_X).$$
\end{theorem}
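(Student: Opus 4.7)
The plan is to proceed in three stages: well-definedness of $\Sol_X$, construction of a quasi-inverse, and verification of the equivalence. For the first stage I would show that $\Sol_X(\M) \in \BDC_{\CC\mbox{\scriptsize -}c}(\CC_X)$ whenever $\M \in \BDCrh(\D_X)$. By devissage in the triangulated category this reduces to a single regular holonomic $\D_X$-module, and Kashiwara's constructibility theorem already yields the $\CC$-constructibility of $\Sol_X(\M)$ for any holonomic $\M$; regularity is not needed at this stage.

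Next, I would build a candidate quasi-inverse $\RH_X\colon \BDC_{\CC\mbox{\scriptsize -}c}(\CC_X) \to \BDCrh(\D_X)^\op$ by Deligne--Malgrange reconstruction. Given a constructible complex $F$, fix a Whitney stratification adapted to $F$ and proceed stratum by stratum. On each open stratum $F$ restricts to a local system, and Deligne's theorem on regular meromorphic connections along a normal crossing divisor (obtained from a Hironaka resolution of the stratum closure) extends this local system uniquely to a regular holonomic $\D$-module with prescribed pole locus. These local models are then glued via direct image along the resolution, using the distinguished triangles attached to the open--closed decompositions and the compatibility of $\Sol_X$ with the six operations. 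To verify $\Sol_X\circ\RH_X\simeq\id$ and $\RH_X\circ\Sol_X\simeq\id$, I would argue by descending induction on the strata: on each open stratum both natural transformations reduce to Deligne's classical Riemann--Hilbert correspondence for regular integrable connections, and the inductive step is carried out using the attaching triangle of each open--closed pair together with the functoriality already used in the construction.

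The crucial obstacle is the regularity condition. Producing some holonomic $\D$-module whose solution complex matches a given constructible sheaf can be done crudely via pushforward from a resolution, but ensuring that the output belongs to $\BDCrh(\D_X)$ rather than merely to $\BDChol(\D_X)$ is the deep point: it ultimately rests on Mebkhout's local Riemann--Hilbert theorem and the characterization of regular holonomicity by vanishing of the irregularity sheaf along every hypersurface. This control of irregularity distinguishes the regular from the general holonomic case and is exactly what fails in the irregular setting, motivating the enhanced subanalytic framework developed subsequently.
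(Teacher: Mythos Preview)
The paper does not prove this theorem. It is stated in the Introduction as historical background, with attribution to \cite{Kas84}, and no proof is given anywhere in the paper; the paper's own contributions begin only in Section~\ref{sec3}. So there is no ``paper's own proof'' to compare your sketch against.

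As for the sketch itself: the outline you give is in the right spirit but is not the route Kashiwara actually takes in \cite{Kas84}, nor the one in the standard modern accounts. Your proposed quasi-inverse, built by gluing Deligne extensions stratum-by-stratum via Hironaka resolutions and open--closed triangles, would be extremely delicate to make rigorous: the gluing data across strata are not canonical, and it is unclear how you would show the resulting object is independent of the chosen stratification and resolution. The standard construction avoids this entirely by defining the quasi-inverse globally as $\RH_X(F)=\rhom(F,\SO_X^t)$ (or equivalently via tempered distributions $\mathcal{T}\!\mathit{hom}$), and then proving the adjunction isomorphisms by reducing, via the six operations, to the case of a regular meromorphic connection along a normal crossing divisor. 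Your remark that regularity is the deep point is correct, but the mechanism you invoke (Mebkhout's irregularity sheaf) is one of several equivalent characterizations rather than the engine of the proof.
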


This equivalence of categories is called (analytic) regular Riemann--Hilbert correspondence
or Riemann--Hilbert correspondence for (analytic) regular holonomic $\D$-modules.

After the appearance of the regular Riemann--Hilbert correspondence,
A.\:Beilinson and J.\:Bernstein developed systematically
a theory of regular holonomic $\D$-modules on smooth algebraic varieties
over the complex number field $\CC$ and
obtained an algebraic version of the Riemann--Hilbert correspondence stated as follows.
Let $X$ be a smooth algebraic variety over $\CC$.
We denote by $X^\an$ the underling complex analytic manifold of $X$,
by $\BDCrh(\D_X)$ the triangulated category of regular holonomic $\D_X$-modules on $X$ and 
by $\BDC_{\CC\mbox{\scriptsize -}c}(\CC_X)$ the one of algebraic $\CC$-constructible sheaves on $X^\an$.
\begin{theorem}[{\cite{Be, Bor} and also \cite{Sai89}}]
There exists an equivalence of triangulated categories:
\[\Sol_X \colon \BDCrh(\D_X)^{\op}\simto\BDC_{\CC\mbox{\scriptsize -}c}(\CC_X),\
\M\mapsto \Sol_X(\M) := \Sol_{X^\an}(\M^\an),\]
here, $\M^\an$ is the analytification of $\M$.
\end{theorem}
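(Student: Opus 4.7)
My plan is to deduce the algebraic Riemann--Hilbert correspondence from its analytic counterpart (Theorem~1.1) by means of a smooth compactification argument combined with a GAGA-type equivalence for regular holonomic $\D$-modules, in the spirit of Beilinson--Bernstein and Saito.

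First, using Hironaka's resolution of singularities, I would choose an open immersion $j\colon X\hookrightarrow \bar{X}$ into a smooth projective variety such that $D := \bar{X}\setminus X$ is a simple normal crossing divisor. On the $\D$-module side, the derived pushforward $\bfR j_*$ gives a fully faithful embedding
\[
\BDCrh(\D_X)\;\hookrightarrow\;\BDCrh(\D_{\bar{X}})
\]
whose essential image is the full subcategory of complexes $\N$ satisfying $\N\simto \N(*D)$, i.e.\ those localized along $D$. Because $\bar{X}$ is projective, Serre's GAGA for coherent $\D$-modules, combined with Deligne's classical theorem on regular meromorphic connections, identifies this algebraic subcategory with the corresponding subcategory of $\BDCrh(\D_{\bar{X}^{\an}})$ consisting of regular holonomic $\D$-modules localized along $D^{\an}$. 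Applying Kashiwara's analytic theorem on $\bar{X}^{\an}$ and then pulling back along $j^{\an}$ produces a functor that sends $\M$ to $\Sol_X(\M)$, using the compatibility $(j^{\an})^{-1}\Sol_{\bar{X}^{\an}}\bigl((\bfR j_*\M)^{\an}\bigr)\simeq \Sol_{X^{\an}}(\M^{\an})$.

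It remains to check that this composite lands in, and exhausts, the subcategory $\BDC_{\CC\mbox{\scriptsize -}c}(\CC_X)$ of algebraic $\CC$-constructible sheaves. For the forward direction, one uses that any $\M\in\BDCrh(\D_X)$ admits an algebraic Whitney stratification with respect to which its characteristic variety decomposes, so that by the analytic theorem $\Sol_X(\M)$ is constructible with respect to the same algebraic stratification. For essential surjectivity, given an algebraic $\CC$-constructible sheaf $\F$ on $X^{\an}$, the pushforward $\bfR j_*^{\an}\F$ is constructible on $\bar{X}^{\an}$ with respect to an algebraic Whitney stratification compatible with $D$, and Kashiwara's inverse correspondence produces an analytic $\bar{\M}\in\BDCrh(\D_{\bar{X}^{\an}})$ localized along $D^{\an}$; the GAGA equivalence algebraizes $\bar{\M}$ to some $\M'\in\BDCrh(\D_{\bar{X}})$ with $\M'\simto \M'(*D)$, and $\M := j^*\M'$ is then the desired preimage of $\F$.

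The main obstacle is precisely the GAGA-type equivalence for regular holonomic $\D$-modules invoked above. For a single regular meromorphic connection it is Deligne's classical theorem, classifying such connections by their monodromy representations. For an arbitrary object of $\BDCrh(\D_X)$ one argues by d\'evissage along the standard $\mathrm{t}$-structure, reducing to simple regular holonomic modules, which are minimal extensions of connections along strata of an algebraic Whitney stratification; the equivalence for these follows from Deligne's theorem on the smooth locus combined with the uniqueness of minimal extensions. Once this GAGA-type statement is established, the remainder is a formal verification using the compatibility of $\Sol$ with the six operations.
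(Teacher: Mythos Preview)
The paper does not prove this theorem at all: it is stated in the introduction as a classical background result, with a bare citation to Bernstein's notes, Borel's book, and Saito's paper, and no argument is given. There is therefore nothing in the paper to compare your sketch against.

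That said, your outline is a faithful summary of the standard proof one finds in those references: compactify, use $\bfR j_\ast$ to reduce to the projective case, invoke a GAGA-type comparison for regular holonomic $\D$-modules, and then apply Kashiwara's analytic theorem. You correctly identify the GAGA step as the heart of the matter, and the d\'evissage to simple objects (minimal extensions of regular connections on strata) is indeed how one reduces to Deligne's theorem. One small caution: the phrase ``Serre's GAGA for coherent $\D$-modules'' is doing a lot of work---ordinary GAGA gives only that analytification is fully faithful on coherent $\D_{\bar X}$-modules and essentially surjective onto coherent $\D_{\bar X^{\an}}$-modules, but it does not by itself tell you that regularity and holonomicity are preserved and reflected; that is precisely what the d\'evissage plus Deligne's theorem supplies, and you should make the logical dependence explicit rather than listing the two ingredients side by side.
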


On the other hand,
the problem of extending the (analytic) regular Riemann--Hilbert correspondence to cover
the case of (analytic) irregular holonomic $\D$-modules had been open for 30 years.
After a groundbreaking development in the theory of irregular meromorphic connections by 
K.S.\:Kedlaya \cite{Ked10, Ked11} and T.\:Mochizuki \cite{Mochi09, Mochi11},
A.\:D'Agnolo and M.\:Kashiwara established the Riemann--Hilbert correspondence
for (analytic) irregular holonomic $\SD$-modules in \cite{DK16} as below.
For this purpose, they introduced enhanced ind-sheaves extending the notion
of ind-sheaves introduced by M.\:Kashiwara and P.\:Schapira in \cite{KS01}. 
Let $X$ be a complex manifold.
We denote by $\BDChol(\D_X)$ the triangulated category of holonomic $\D_X$-modules
and denote by $\BEC_{\RR\mbox{\scriptsize -}c}(\I\CC_X)$ the one of $\RR$-constructible enhanced ind-sheaves.

\begin{theorem}[{\cite[Thm.\:9.5.3]{DK16}}]\label{irreg_DK}
There exists a fully faithful functor:
$$\Sol^\rmE_X\colon \BDChol(\D_X)^{\op}\hookrightarrow\BEC_{\RR\mbox{\scriptsize -}c}(\I\CC_X).$$
\end{theorem}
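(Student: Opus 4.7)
The plan is to reduce the full faithfulness assertion to an explicit computation on elementary normal forms of irregular holonomic $\D$-modules, then invoke the Kedlaya--Mochizuki theorem on resolution of turning points to globalize. Full faithfulness amounts to showing that for all $\M, \N \in \BDChol(\D_X)$ the natural morphism
\[
\rHom_{\D_X}(\M, \N) \longrightarrow \rHom(\Sol^\rmE_X(\N), \Sol^\rmE_X(\M))
\]
is an isomorphism. The standard strategy is to establish a reconstruction theorem: there should exist an enhanced version $\SO^\rmE_X$ of the structure sheaf from which $\M$ can be recovered as a suitable enhanced Hom out of $\Sol^\rmE_X(\M)$ into $\SO^\rmE_X$. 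Once such reconstruction is proved, full faithfulness follows formally. The question is local on $X$, and by d\'evissage on the support one may reduce to the case of a single meromorphic connection with singularities along a hypersurface $Y$.

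Next I would apply the Kedlaya--Mochizuki theorem: after a proper modification $\pi \colon \tl X \to X$ obtained as a composition of blow-ups and ramified coverings, the pull-back $\pi^{*}\M$ acquires a good formal structure along a normal crossing divisor $\tl Y$ and is locally formally a direct sum of elementary models $\E^\varphi_{U|\tl X} \Dotimes \R$, where $\varphi \in \SO_{\tl X}(*\tl Y)$ is meromorphic with poles on $\tl Y$ and $\R$ is a meromorphic connection with regular singularities. Since $\Sol^\rmE_X$ is compatible with proper direct image, the result for $\M$ reduces to the analogous result for $\pi^{*}\M$ on $\tl X$.

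On $\tl X$ the enhanced solution complex of each elementary model can be computed explicitly: it is (up to shift) given by $\CC_{\{t + \Re\varphi \geq 0\}}$ tensored in the enhanced framework with the classical solution complex of $\R$. The $\RR$-constructibility is immediate from the subanalyticity of such sets, and the reconstruction morphism is checked directly pair-by-pair, using the regular Riemann--Hilbert correspondence for the $\R$-factor together with an elementary computation involving the exponential cut-offs $\CC_{\{t + \Re\varphi \geq 0\}}$.

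The main obstacle is that the Kedlaya--Mochizuki decomposition is only formal: a genuine analytic holonomic $\D$-module differs from its direct sum of elementary models by Stokes data encoded in a Stokes filtration. To prove the reconstruction on $\tl X$ one must therefore promote the formal model statement to a sector-by-sector analytic statement via Mochizuki's asymptotic analysis, and then glue across Stokes lines. Verifying that the enhanced ind-sheaf $\Sol^\rmE_X(\M)$ faithfully records the Stokes data --- and in particular separates pairs of holonomic $\D$-modules that share the same formal structure but carry inequivalent Stokes filtrations --- is the technical heart of D'Agnolo--Kashiwara's argument.
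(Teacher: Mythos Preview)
The paper does not prove this statement. Theorem~\ref{irreg_DK} is stated in the introduction as a citation of \cite[Thm.\ 9.5.3]{DK16} and is used as background; no argument for it appears anywhere in the body of the paper. The paper's own contributions (Theorems~\ref{main1}--\ref{main4}) take this result as input, combining it with the equivalence $I_X^\rmE$ between enhanced subanalytic sheaves and enhanced ind-sheaves established in \cite{Ito21b}.

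Your sketch is a reasonable outline of the D'Agnolo--Kashiwara argument in \cite{DK16}: the reconstruction formula $\M \simeq \rihom^\rmE(\Sol^\rmE_X(\M), \SO^\rmE_X)[d_X]$, reduction via Kedlaya--Mochizuki to connections with good formal structure along a normal crossing divisor, explicit computation of $\Sol^\rmE$ on the elementary models $\E^\varphi \Dotimes \R$, and the sectorial analysis handling Stokes data. But since the present paper contains no proof of this theorem, there is nothing here to compare your proposal against.
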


In 2016, T.\:Mochizuki proved that
the essential image of $\Sol_X^{\rmE}$ can be characterized  by the curve test \cite{Mochi22}.

In \cite{Ito20}, the author defined $\CC$-constructability for enhanced ind-sheaves
and proved that they are nothing but objects of its essentially image.
Namely, we obtain an equivalence of categories
between the triangulated category $\BDChol(\D_X)$
and the one $\BEC_{\CC\mbox{\scriptsize -}c}(\I\CC_X)$ of $\CC$-constructible enhanced ind-sheaves.
\begin{theorem}[{\cite[Thm.\:3.26]{Ito20}}]\label{irregRH_ana}
There exists an equivalence of triangulated categories:
\[\Sol_X^{\rmE} \colon \BDChol(\D_X)^{\op}\simto \BEC_{\CC\mbox{\scriptsize -}c}(\I\CC_X).\]
\end{theorem}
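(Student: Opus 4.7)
By Theorem~\ref{irreg_DK}, the solution functor $\Sol_X^\rmE$ is already fully faithful from $\BDChol(\D_X)^{\op}$ into $\BEC_{\RR\mbox{\scriptsize -}c}(\I\CC_X)$. Hence it suffices to identify its essential image with the full subcategory $\BEC_{\CC\mbox{\scriptsize -}c}(\I\CC_X)$. This amounts to proving two inclusions: \emph{(a)} $\Sol_X^\rmE(\M)$ is $\CC$-constructible for every $\M\in\BDChol(\D_X)$, and \emph{(b)} every $K\in\BEC_{\CC\mbox{\scriptsize -}c}(\I\CC_X)$ is isomorphic to $\Sol_X^\rmE(\M)$ for some holonomic $\D_X$-module $\M$.

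For \emph{(a)}, the natural strategy is to reduce to a normal-crossing model. By the Kedlaya--Mochizuki resolution of turning points, after a suitable proper modification $\pi\colon\tilde X\to X$, the pullback $\pi^*\M$ admits a good formal structure along a normal-crossing exceptional divisor. In this local normal form, the Hukuhara--Levelt--Turrittin decomposition presents $\M$ as a finite combination of regular holonomic modules tensored with exponential twists $\E^{\varphi/g}$, and $\Sol^\rmE$ sends these to explicit enhanced ind-sheaves of the form $\EE^{\varphi/g}$ matched with regular $\CC$-constructible data. Such model objects are $\CC$-constructible by the very definition announced in \cite{Ito21b}. Descending by the compatibility of $\Sol^\rmE$ with proper direct images, together with stability of $\CC$-constructibility under proper pushforward, then yields $\CC$-constructibility of $\Sol_X^\rmE(\M)$ on $X$.

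For \emph{(b)}, the key input is Mochizuki's curve test, which characterizes the essential image of $\Sol^\rmE$ inside $\BEC_{\RR\mbox{\scriptsize -}c}(\I\CC_X)$ as those objects whose pullback along every holomorphic map from a complex curve lies in the one-dimensional essential image of $\Sol^\rmE$. It then suffices to verify that every $\CC$-constructible $K$ passes this test. By definition, $K$ is governed by a complex analytic stratification equipped with exponential factors on each stratum; restricting this data along any holomorphic map $C\to X$ produces one-dimensional Stokes-type data, which is classified by holonomic $\D_C$-modules via the one-dimensional irregular Riemann--Hilbert correspondence.

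The principal obstacle is step \emph{(b)}. Step \emph{(a)} is essentially bookkeeping once Kedlaya--Mochizuki is in hand, but step \emph{(b)} requires a careful bridge between a global condition formulated through analytic stratifications and the pointwise curve-test condition. In particular, one needs to control how exponential factors pull back along arbitrary holomorphic curves, including those tangent to the stratification or meeting the singular locus in subtle configurations. Establishing this compatibility in full generality, and thereby matching the announced definition of $\CC$-constructibility with the essential image described by Mochizuki, constitutes the technical heart of the argument.
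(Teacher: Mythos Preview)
This theorem is not proved in the present paper; it is quoted from \cite{Ito20} (and restated later as Theorem~\ref{thm-Ito20}), so there is no in-paper argument to compare against directly. What follows is a comparison with the strategy of \cite{Ito20} as reflected in the definitions reproduced here.

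On step~(a) your outline is aligned with \cite{Ito20}: the hierarchy normal form $\to$ quasi-normal form $\to$ modified quasi-normal form $\to$ $\CC$-constructible (Definitions~\ref{def-normal}--\ref{def-const}) is designed precisely so that Kedlaya--Mochizuki's resolution of turning points makes $\Sol_X^\rmE(\M)$ land in $\BEC_{\CC\mbox{\scriptsize -}c}(\I\CC_X)$. One small correction: the relevant definition is introduced in \cite{Ito20}, not in \cite{Ito21b} as you write.

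On step~(b) your route through Mochizuki's curve test is genuinely different from, and more circuitous than, the argument in \cite{Ito20}. The point of the structural definition is that a $\CC$-constructible $K$ is, after blow-up along each stratum closure, modification, and ramification, locally a finite direct sum $\bigoplus_i \EE^{\Re\varphi_i}_{U_{x,j}|U_x}$ for a good set of irregular values. This is exactly the shape of $\Sol^\rmE$ applied to a meromorphic connection with good formal decomposition, so one reconstructs a holonomic $\M$ directly from the local data, gluing via the compatibility of $\Sol^\rmE$ with the six operations and inducting on the stratification; no appeal to the curve test is needed. Your detour would instead require showing that every object satisfying Definition~\ref{def-const} passes Mochizuki's criterion on all holomorphic curves, which, as you yourself flag, is delicate for curves tangent to strata or meeting the singular locus badly --- and it discards the explicit local model that the definition hands you for free. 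The structural reconstruction is both shorter and avoids the obstacle you identify as the technical heart.
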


Remark that T.\:Kuwagaki introduced another approach
to the irregular Riemann--Hilbert correspondence
via irregular constructible sheaves which are defined by $\CC$-constructible sheaves
with coefficients in a finite version of the Novikov ring and special gradings
in \cite[Thm.\:8.6]{Kuwa21}.

Furthermore, 
the author proved an algebraic version of Theorem \ref{irregRH_ana} in \cite{Ito21} as below.
Let $X$ be a smooth algebraic variety
and denote by $\tl{X}$ a smooth completion of $X$.
%(i.e., $\tl{X}$ is a smooth complete algebraic variety over $\CC$
%which contains $X$ as an open subvariety and $\tl{X}\setminus X$ is a normal crossing divisor of $\tl{X}$).
The author defined algebraic $\CC$-constructibility for enhanced ind-sheaves
on a real analytic bordered space $X_\infty^\an = (X^\an, \tl{X}^\an)$
and proved that there exists an equivalence of categories between $\BDChol(\D_X)$
and the triangulated category $\BEC_{\CC\mbox{\scriptsize -}c}(\I\CC_{X_\infty})$
of algebraic $\CC$-constructible enhanced ind-sheaves on $X_\infty^\an$.

\begin{theorem}[{\cite[Thm.\:3.11]{Ito21}}]\label{irregRH_alg}
There exists an equivalence of triangulated categories:
\[\Sol_{X_\infty}^{\rmE} \colon \BDChol(\D_X)^{\op}\simto \BEC_{\CC\mbox{\scriptsize -}c}(\I\CC_{X_\infty}).\]
\end{theorem}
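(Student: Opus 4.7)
The plan is to reduce the algebraic statement to the analytic irregular Riemann--Hilbert correspondence (Theorem \ref{irregRH_ana}) by passing to the smooth completion $\tl{j}\colon X \hookrightarrow \tl{X}$ and then exploiting the bordered-space structure $X_\infty^\an = (X^\an, \tl{X}^\an)$. For $\M \in \BDChol(\D_X)$, the algebraic direct image $\int_{\tl{j}}\M$ is a holonomic $\D_{\tl{X}}$-module; its analytification $(\int_{\tl{j}}\M)^\an$ lies in $\BDChol(\D_{\tl{X}^\an})$. Composing $\Sol_{\tl{X}^\an}^{\rmE}$ with the canonical restriction functor $\BEC(\I\CC_{\tl{X}^\an}) \to \BEC(\I\CC_{X_\infty^\an})$ attached to the bordered space yields the candidate functor $\Sol_{X_\infty}^{\rmE}$. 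The first preliminary step is to verify that its essential image lies in $\BEC_{\CC\mbox{\scriptsize -}c}(\I\CC_{X_\infty})$; this should hold by design, since algebraic $\CC$-constructibility on $X_\infty^\an$ is meant to consist precisely of the bordered-space images of $\CC$-constructible enhanced ind-sheaves on $\tl{X}^\an$ whose underlying stratification is algebraic and subordinate to $D := \tl{X} \setminus X$.

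Full faithfulness then splits into three fully faithful steps: (i) the assignment $\M \mapsto (\int_{\tl{j}}\M)^\an$ embeds $\BDChol(\D_X)$ into $\BDChol(\D_{\tl{X}^\an})$, by a GAGA-type statement on the proper variety $\tl{X}$ together with the classical Beilinson--Bernstein version of regular Riemann--Hilbert for the regular components; (ii) $\Sol_{\tl{X}^\an}^{\rmE}$ is an equivalence by Theorem \ref{irregRH_ana}; and (iii) the bordered-space functor restricts to a fully faithful functor on the relevant $\CC$-constructible subcategories. For essential surjectivity, given $L \in \BEC_{\CC\mbox{\scriptsize -}c}(\I\CC_{X_\infty})$, one extends $L$ to a $\CC$-constructible enhanced ind-sheaf $\tl{L}$ on $\tl{X}^\an$ using the algebraic stratification built into the definition, inverts $\Sol_{\tl{X}^\an}^{\rmE}$ to obtain a holonomic $\D_{\tl{X}^\an}$-module $\tl{\N}$, and finally descends $\tl{\N}$ to some $\M \in \BDChol(\D_X)$ with $(\int_{\tl{j}}\M)^\an \simeq \tl{\N}$.

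The main obstacle will be the descent step in essential surjectivity: one must prove that the holonomic $\D_{\tl{X}^\an}$-module attached via irregular Riemann--Hilbert to an algebraically $\CC$-constructible enhanced ind-sheaf is in fact of algebraic origin on $X$. This requires combining GAGA for coherent $\D$-modules on the proper variety $\tl{X}$ with Hukuhara--Levelt--Turrittin control of the irregular structure along $D$, so that both the regular components and the Stokes data along $D$ originate from algebraic input. A subsidiary conceptual difficulty is to set up algebraic $\CC$-constructibility on the bordered space intrinsically, that is, independently of the chosen completion $\tl{X}$, and in a manner stable under the standard six operations so that the chain of functors above is genuinely an equivalence rather than only essentially surjective on a convenient generating subcategory.
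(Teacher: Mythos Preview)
This theorem is not proved in the present paper; it is quoted from \cite[Thm.\:3.11]{Ito21}, both here in the introduction and again as Theorem~\ref{thm-Ito21} in the preliminaries. So there is no proof in this paper to compare your proposal against directly. What the paper does indicate is the architecture of the argument in \cite{Ito21}: one first establishes the complete-variety case (Theorem~\ref{thm-Ito21_complete}), namely $\Sol_{\tl{X}}^\rmE\colon \BDChol(\D_{\tl{X}})^\op \simto \BEC_{\CC\mbox{\scriptsize -}c}(\I\CC_{\tl{X}})$ with $\Sol_{\tl{X}}^\rmE(\M) = \Sol_{\tl{X}^\an}^\rmE(\M^\an)$, and then the general case follows formally from Definition~\ref{def-algconst} via the explicit formula $\Sol_{X_\infty}^\rmE(\M) = \bfE j_{X_\infty^\an}^{-1}\Sol_{\tl{X}}^\rmE(\bfD j_\ast\M)$.

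Your outline is broadly in this spirit, but the way you factor the argument introduces difficulties that the actual proof avoids. Your step~(i) asks for $\M \mapsto (\int_{\tl{j}}\M)^\an$ to be fully faithful from $\BDChol(\D_X)$ into $\BDChol(\D_{\tl{X}^\an})$; full faithfulness of analytification on \emph{irregular} holonomic $\D$-modules is not an off-the-shelf input but rather a consequence of the theorem you are proving. Your step~(ii) invokes only the analytic equivalence of Theorem~\ref{irregRH_ana}, whose target is $\BEC_{\CC\mbox{\scriptsize -}c}(\I\CC_{\tl{X}^\an})$; you still need the algebraic refinement that the image of $\Sol^\rmE$ on algebraic holonomic modules is characterized exactly by the condition $\AC$. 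Both issues are absorbed by working with the \emph{algebraic} complete-variety equivalence (Theorem~\ref{thm-Ito21_complete}) as a single black box rather than trying to separate analytification from $\Sol^\rmE$.

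On the essential-surjectivity side, your worry about ``extending $L$ to $\tl{L}$'' is moot: by Definition~\ref{def-algconst}, membership in $\BEC_{\CC\mbox{\scriptsize -}c}(\I\CC_{X_\infty})$ \emph{means} that $\bfE j^\an_{!!}L$ already lies in $\BEC_{\CC\mbox{\scriptsize -}c}(\I\CC_{\tl{X}})$, so the extension is canonical. Your ``main obstacle'' --- descending the resulting analytic holonomic module to an algebraic one on $X$ --- is precisely the content of the complete case, and once that is in hand the bordered-space reduction is a formal matching of the adjunctions $(\bfE j^\an_{!!}, \bfE j_{X_\infty^\an}^{-1})$ and $(\bfD j^\ast, \bfD j_\ast)$. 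So your plan is not wrong, but the real work is concentrated in Theorem~\ref{thm-Ito21_complete}, and your decomposition obscures rather than isolates it.
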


At the 16th Takagi lecture,
M.\:Kashiwara explained a similar result of Theorem.\:\ref{irreg_DK}
by using subanalytic sheaves instead of enhanced ind-sheaves as below.
We denote by $\BDC(\CC_{X\times\RR_\infty}^{\sub})$ the derived category of subanalytic sheaves
on an anlytic bordered space $X\times\RR_\infty$. 
\begin{theorem}[{\cite[\S 5.4]{Kas16}}]
Then there exists a fully faithful functor: %$\Sol_X^{\T}$ (see \cite[\S 5.4]{Kas16} for the definition)
%from $\BDChol(\D_X)$ to $\BDC(\CC_{X\times\RR_\infty}^{\sub})$:
$$\Sol_X^{\T}\colon \BDChol(\D_X)^{\op}\hookrightarrow\BDC(\CC^\sub_{X\times \RR_\infty}).$$
\end{theorem}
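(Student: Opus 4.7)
The natural approach is to realize $\Sol_X^{\T}(\M)$ as the subanalytic sheaf on $X\times\RR_\infty$ that parametrizes, through the variable $t\in\RR$, the exponentially weighted tempered holomorphic solutions of $\M$. Concretely, one would build an analogue of Kashiwara's sheaf of tempered holomorphic functions on the bordered space $X\times\RR_\infty$, obtained by twisting the tempered structure by the exponential weight $e^{-t}$, and set
\[
\Sol_X^{\T}(\M) \;:=\; \rhom_{\pi^{-1}\D_X}\!\bigl(\pi^{-1}\M,\, \SO^{T}_{X\times\RR_\infty}\bigr),
\]
where $\pi\colon X\times\RR_\infty\to X$ is the projection. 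The first step is to verify that this object genuinely lies in $\BDC(\CC^{\sub}_{X\times\RR_\infty})$, which reduces to the known subanalyticity of tempered solution complexes and the compatibility of the $e^{-t}$ twist with the subanalytic topology on $X\times\RR_\infty$.

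The second step, and the heart of the proof, is to compare $\Sol_X^{\T}$ with the already-established enhanced ind-sheaf version $\Sol_X^{\rmE}$ of Theorem~\ref{irreg_DK}. There is a natural functor $\bfE\colon \BDC(\CC^{\sub}_{X\times\RR_\infty})\to\BEC(\I\CC_X)$ obtained by composing the embedding of subanalytic sheaves into ind-sheaves with passage to the enhanced quotient (quotienting by $\pi^{-1}\BDC(\I\CC_X)$ and taking the stabilized part in the $t$-direction). Unwinding the definition of $\SO^E_X$ in \cite{DK16} and of $\SO^T_{X\times\RR_\infty}$, one expects a functorial isomorphism
\[
\bfE\!\left(\Sol_X^{\T}(\M)\right) \;\simeq\; \Sol_X^{\rmE}(\M)
\qquad (\M\in\BDChol(\D_X)).
\]
Granting this compatibility, the fully faithful claim reduces to showing that
\[
\Hom\!\bigl(\Sol_X^{\T}(\N),\,\Sol_X^{\T}(\M)\bigr)\;\xrightarrow{\;\bfE\;}\;\Hom\!\bigl(\Sol_X^{\rmE}(\N),\,\Sol_X^{\rmE}(\M)\bigr)
\]
is bijective on the essential image, because the right-hand side is identified with $\Hom(\M,\N)$ by Theorem~\ref{irreg_DK}.

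The hard part will be precisely this last identification: one must show that no morphisms are gained or lost when passing from the subanalytic model on $X\times\RR_\infty$ to the enhanced ind-sheaf model on $X$. Surjectivity requires lifting every morphism of enhanced solution complexes back to the subanalytic level, which demands a careful analysis of representatives modulo the $\pi^{-1}\BDC(\I\CC_X)$-quotient and of the stability condition in the $t$-direction. Injectivity amounts to showing that the kernel of $\bfE$ meets the image of $\Sol_X^{\T}$ only in zero, which relies on the exponential $t$-weight genuinely detecting distinct $\SO^T_{X\times\RR_\infty}$-solutions. I would expect the cleanest route to both points to go through the bridge between enhanced ind-sheaves and enhanced subanalytic sheaves developed in \cite{Ito21b}, using the results there to transport the fully faithful statement from the enhanced ind-sheaf setting back to $\BDC(\CC^{\sub}_{X\times\RR_\infty})$.
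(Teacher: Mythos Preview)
The paper does not give a proof of this statement: it appears in the introduction as a result of Kashiwara, cited from \cite[\S 5.4]{Kas16}, and is simply recorded without argument. There is therefore no ``paper's own proof'' to compare your proposal against.

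As for the proposal itself: the outline is in the right spirit --- construct $\Sol_X^{\T}$ from an exponentially twisted tempered holomorphic object on $X\times\RR_\infty$, and then transport fully faithfulness from the enhanced ind-sheaf solution functor $\Sol_X^{\rmE}$ via a comparison functor. This is indeed close to how one argues. But what you have written is a strategy, not a proof: the construction of $\SO^{T}_{X\times\RR_\infty}$ and the verification that the comparison map $\bfE$ is bijective on Hom-sets over the image of $\Sol_X^{\T}$ are exactly the substantive points, and you have only named them, not carried them out. Your own ``hard part'' paragraph correctly identifies where the work lies, and that work is precisely what is done in \cite{Kas16}. Note also that appealing to \cite{Ito21b} to establish this theorem reverses the logical order of the paper's narrative, since the results of \cite{Ito21b} (in particular Theorem~\ref{thm1.8}) already presuppose Kashiwara's $\Sol_X^{\T}$.
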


In \cite{Ito21b},
the author define enhanced subanalytic sheaves, and explain a relation between enhanced ind-sheaves and enhanced subanalytic sheaves as below.
Let $M_\infty$ be a real analytic bordered space.
We denote by $\BEC(\CC_{M_\infty}^\sub)$ the triangulated category of enhanced subanalytic sheaves on $M_\infty$
which is defined as the quotient category
$\BDC(\CC^\sub_{X\times \RR_\infty}) / \pi^{-1}\BDC(\CC^\sub_{X})$,
here $\pi\colon M_\infty\times \RR_\infty\to M_\infty$ is the standard projection.
The author defined $\RR$-constructability for enhanced subanalytic sheaves
and prove that there exists an equivalence of categories between 
the triangulated category $\BEC_{\RR\mbox{\scriptsize -}c}(\I\CC_{M_\infty})$
of $\RR$-constructible enhanced ind-sheaves on $M_\infty$
and the one $\BEC_{\RR\mbox{\scriptsize -}c}(\CC_{M_\infty}^\sub)$
of $\RR$-constructible enhanced subanalytic sheaves on $M_\infty$ in \cite{Ito21b}.
\begin{theorem}[{\cite[Thms.\:3.17, 3.22]{Ito21b}}]
There exists a fully faithful functors:
\begin{align*}
I^\rmE_{M_\infty}&\colon \BEC(\CC_{M_\infty}^\sub)\hookrightarrow \BEC(\I\CC_{M_\infty}),\\
\bfR_X^{\rmE, \sub}&\colon \BEC(\CC_{M_\infty}^\sub)\hookrightarrow \BDC(\CC^\sub_{X\times \RR_\infty})
\end{align*}
and an equivalence of triangulated categories:
$$I^\rmE_{M_\infty}\colon \BEC_{\RR\mbox{\scriptsize -}c}(\CC_{M_\infty}^\sub)\to \BEC_{\RR\mbox{\scriptsize -}c}(\I\CC_{M_\infty}).$$
\end{theorem}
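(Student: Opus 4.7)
The plan is to construct both embeddings by descending the classical Kashiwara--Schapira fully faithful functor $I^\sub_N\colon \BDC(\CC^\sub_N)\hookrightarrow\BDC(\I\CC_N)$, applied to the real analytic bordered space $N = M\times\RR_\infty$, from the ambient derived categories down to the enhanced quotients; and analogously to realise $\bfR^{\rmE,\sub}_X$ through a canonical splitting of the localisation $\BDC(\CC^\sub_{M\times\RR_\infty})\twoheadrightarrow\BEC(\CC^\sub_{M_\infty})$.

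For $I^{\rmE}_{M_\infty}$, I would first check that $I^\sub_{M\times\RR_\infty}$ commutes with $\pi^{-1}$; this is standard, since the natural embedding of subanalytic sheaves into ind-sheaves is compatible with inverse images along morphisms of real analytic bordered spaces. Consequently $I^\sub_{M\times\RR_\infty}$ sends $\pi^{-1}\BDC(\CC^\sub_M)$ into $\pi^{-1}\BDC(\I\CC_M)$ and hence descends to a functor $I^\rmE_{M_\infty}$ between the Verdier quotients. Fully faithfulness then follows from the fully faithfulness of $I^\sub_{M\times\RR_\infty}$ combined with the standard description of morphisms in a Verdier quotient by a thick subcategory: any morphism in $\BEC(\CC^\sub_{M_\infty})$ is represented by a roof whose cone lies in $\pi^{-1}\BDC(\CC^\sub_M)$, and the compatibility just recalled shows that such roofs are preserved and reflected by $I^\sub_{M\times\RR_\infty}$. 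For $\bfR^{\rmE,\sub}_X$ I would exhibit an explicit section of the localisation, using the convolution idempotent $\CC^\sub_{\{t\geq 0\}}\Potimes(-)$ along the $\RR_\infty$-direction to single out a canonical ``stabilised'' representative of each equivalence class; this section is automatically fully faithful.

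For the equivalence on $\RR$-constructible parts, the starting point is the classical restriction of $I^\sub_N$ to an equivalence $\BDC_{\RR\mbox{\scriptsize -}c}(\CC^\sub_N)\simto \BDC_{\RR\mbox{\scriptsize -}c}(\I\CC_N)$, applied with $N=M\times\RR_\infty$. After passing to the enhanced quotients one obtains an induced functor on $\RR$-constructible enhanced subanalytic sheaves, and what remains is to show that this induced functor is essentially surjective onto $\BEC_{\RR\mbox{\scriptsize -}c}(\I\CC_{M_\infty})$ and fully faithful between the $\RR$-constructible subcategories. The main obstacle I expect is reconciling the two notions of $\RR$-constructibility at the enhanced level: one needs to verify that an object of $\BEC(\CC^\sub_{M_\infty})$ is $\RR$-constructible if and only if its image under $I^\rmE_{M_\infty}$ is so. I would do this by using the stabilisation idempotent from the previous paragraph to select a canonical representative in $\BDC(\CC^\sub_{M\times\RR_\infty})$, then applying the non-enhanced $\RR$-constructible equivalence of Kashiwara--Schapira to transfer the property across, and finally reading off essential surjectivity by lifting any $\RR$-constructible representative of an enhanced ind-sheaf to its stabilised subanalytic preimage.
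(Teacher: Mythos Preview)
The statement you are asked to prove is not proved in this paper: it is quoted from \cite[Thms.\ 3.17, 3.22]{Ito21b} as background (it appears in the Introduction and again as Theorem~\ref{thm2.6} in \S\ref{2.11}, both times without proof). So there is no ``paper's own proof'' to compare against here.

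That said, the paper does record the \emph{definitions} of the functors in \S\ref{2.11}, and these line up with your descent strategy for $I^\rmE_{M_\infty}$: it is defined by $\Q_{M_\infty}^\sub\SF\mapsto\Q_{M_\infty}I_{M_\infty\times\RR_\infty}\SF$, exactly the descent of the non-enhanced $I_{M_\infty\times\RR_\infty}$ to the Verdier quotients. The mechanism for full faithfulness differs, however: rather than your roof argument, the paper (and presumably \cite{Ito21b}) uses the existence of a right adjoint $J^\rmE_{M_\infty}$ together with the unit isomorphism $\id\simto J^\rmE_{M_\infty}\circ I^\rmE_{M_\infty}$ (Theorem~\ref{thm2.6}(1)). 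Both are valid; the adjoint route is cleaner and also what drives the $\RR$-constructible equivalence.

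One correction to your sketch: the section of the quotient that you describe via the convolution idempotent $\CC^\sub_{\{t\geq 0\}}\Potimes(-)$ is closer to the \emph{left} adjoint $\bfL^{\rmE,\sub}_{M_\infty}$, not $\bfR^{\rmE,\sub}_{M_\infty}$; and in any case both adjoints use $\Bbbk_{\{t\geq 0\}}\oplus\Bbbk_{\{t\leq 0\}}$ (see the proof of Proposition~\ref{prop3.4}, where $\bfL^\rmE_{M_\infty}\circ\Q_{M_\infty}$ is identified with convolution by $\Bbbk_{\{t\leq 0\}}\oplus\Bbbk_{\{t\geq 0\}}$). The right adjoint is built from $\rihom^+$ rather than $\Potimes$.
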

Moreover, there exists a fully faithful functor $\Sol_X^{\rmE, \sub}$
from $\BDChol(\D_X)$ to $\BEC_{\RR\mbox{\scriptsize -}c}(\CC_{X}^\sub)$.
\begin{theorem}[{\cite[Thm.\:3.22]{Ito21b}}]\label{thm1.8}
There exists a fully faithful functor
$$\Sol_X^{\rmE, \sub}\colon \BDChol(\D_X)^{\op}\hookrightarrow\BEC_{\RR\mbox{\scriptsize -}c}(\CC_X^\sub)$$
and the following diagram is commutative:
\[\xymatrix@M=7pt@R=35pt@C=60pt{
{}&{}&\BDC(\CC_{X\times\RR_\infty}^\sub)\\
\BDChol(\D_X)
\ar@{^{(}->}[r]_-{\Sol_X^{\rmE, \sub}}
\ar@{^{(}->}[rru]
\ar@<0.6ex>@{}[rru]^-{\Sol_X^{\T, \sub}(\cdot)[1]}
\ar@{^{(}->}[rd]_-{\Sol_X^{\rmE}}
 & \BEC_{\RR\mbox{\scriptsize -}c}(\CC_X^\sub)\ar@{}[r]|-{\text{\large $\subset$}}
 \ar@<-1.0ex>@{->}[d]_-{I_X^\rmE}\ar@{}[d]|-\wr
  & \BEC(\CC_X^\sub)\ar@{^{(}->}[u]_-{\bfR_X^{\rmE, \sub}}
   \ar@<-1.0ex>@{^{(}->}[d]^-{I_X^\rmE}\\
{}&\BEC_{\RR\mbox{\scriptsize -}c}(\I\CC_X)\ar@{}[r]|-{\text{\large $\subset$}}
&\BEC(\I\CC_X).
}\]
\end{theorem}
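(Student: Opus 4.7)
The plan is to construct $\Sol_X^{\rmE, \sub}$ by transport of structure along the equivalence furnished in Theorem 1.7. Concretely, since by Theorem \ref{irreg_DK} the functor $\Sol^\rmE_X$ lands in $\BEC_{\RR\mbox{\scriptsize -}c}(\I\CC_X)$, and since $I_X^\rmE$ restricts to an equivalence $\BEC_{\RR\mbox{\scriptsize -}c}(\CC_X^\sub)\simto\BEC_{\RR\mbox{\scriptsize -}c}(\I\CC_X)$ by Theorem 1.7, I would set
\[
\Sol_X^{\rmE, \sub} := (I_X^\rmE)^{-1}\circ\Sol^\rmE_X\colon \BDChol(\D_X)^{\op}\to\BEC_{\RR\mbox{\scriptsize -}c}(\CC_X^\sub),
\]
where $(I_X^\rmE)^{-1}$ denotes a quasi-inverse of the equivalence on $\RR$-constructible objects. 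This is well-defined on the target $\BEC_{\RR\mbox{\scriptsize -}c}(\CC_X^\sub)\subset\BEC(\CC_X^\sub)$.

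Full faithfulness is then immediate: $\Sol^\rmE_X$ is fully faithful by Theorem \ref{irreg_DK}, and $(I_X^\rmE)^{-1}$ is fully faithful as one half of an equivalence; the composition of fully faithful functors is fully faithful. The lower-left triangle of the diagram, $I_X^\rmE\circ\Sol_X^{\rmE, \sub}\simeq\Sol^\rmE_X$, then holds by construction (as $I_X^\rmE\circ(I_X^\rmE)^{-1}\simeq\id$ on the $\RR$-constructible subcategory).

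It remains to establish the upper commutativity, namely the natural isomorphism
\[
\bfR_X^{\rmE, \sub}\circ\Sol_X^{\rmE, \sub}(\M)\simeq \Sol_X^{\T, \sub}(\M)[1]
\]
for $\M\in\BDChol(\D_X)$. The approach is to reduce this identity to its ind-sheaf counterpart, where it is known from \cite{DK16} that $\Sol_X^\rmE(\M)$ is obtained from $\Sol_X^{\T}(\M)[1]$ by applying the natural promotion/quotient functor to enhanced ind-sheaves. One then needs to check that $\bfR_X^{\rmE, \sub}$ is compatible with $I_X^\rmE$ under this comparison, i.e.\ that the square relating $\bfR_X^{\rmE, \sub}$ and the analogous enhancement-forgetting functor on the ind-sheaf side, together with $I_X^\rmE$, commutes up to natural isomorphism. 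Granting such a compatibility, substituting $\Sol_X^{\rmE, \sub}(\M) = (I_X^\rmE)^{-1}\Sol_X^\rmE(\M)$ yields the desired formula.

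The main obstacle will be precisely this compatibility between $\bfR_X^{\rmE, \sub}$, $I_X^\rmE$, and the ind-sheaf analogue of $\bfR_X^{\rmE, \sub}$: it is a commutative-diagram check among four categories and three functors that must be traced through the definitions of enhanced subanalytic sheaves as the quotient $\BDC(\CC^\sub_{X\times\RR_\infty})/\pi^{-1}\BDC(\CC^\sub_X)$ and of $\Sol_X^{\T,\sub}$. The rest of the argument is formal once this compatibility is in place, and the shift by $[1]$ is inherited directly from the known relation $\Sol_X^\rmE\simeq Q\,\Sol_X^{\T}[1]$ on the ind-sheaf side.
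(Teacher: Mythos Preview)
This theorem is not proved in the present paper: it is quoted from \cite[Thm.\:3.22]{Ito21b} as background, and the paper only \emph{uses} it (e.g.\ in the one-line proof of Theorem~\ref{main2}). So there is no ``paper's own proof'' here to compare against directly.

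That said, your outline is consistent with how the paper treats the construction throughout. The paper repeatedly identifies the quasi-inverse of $I_X^\rmE$ on $\RR$-constructible objects with $\lambda_X^\rmE$ (equivalently $J_X^\rmE$), and in the algebraic setting Lemma~\ref{lem3.32} records exactly the relation $\Sol_{X_\infty}^{\rmE,\sub}(\M)\simeq J_{X_\infty^\an}^\rmE\Sol_{X_\infty}^{\rmE}(\M)$, which is your definition in different notation. So the lower triangle and the full faithfulness are handled just as you say.

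Where your sketch is incomplete is precisely where you flag it: the upper triangle $\bfR_X^{\rmE,\sub}\circ\Sol_X^{\rmE,\sub}\simeq\Sol_X^{\T,\sub}(\cdot)[1]$. You correctly identify that this reduces to a compatibility of $\bfR_X^{\rmE,\sub}$ with $I_X^\rmE$ and the ind-sheaf right adjoint $\bfR_X^\rmE$, together with the known relation on the ind-side. But you do not carry this out; in \cite{Ito21b} this compatibility is part of the content of the cited theorem and requires unwinding the definitions of $\bfR_X^{\rmE,\sub}$, $\Q_X^\sub$, and $I_{X\times\RR_\infty}$ (cf.\ the commutation formulas recorded in \cite[Props.\:3.7, 3.12, 3.19]{Ito21b}). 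Until that square is checked, the upper part of the diagram remains an assertion rather than a proof. The rest of your strategy is sound.
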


In this paper,  we shall define $\CC$-constructability for enhanced subanalytic sheaves,
and show that there exists an equivalence of categories
between the triangulated category of $\CC$-constructible enhanced subanalytic sheaves
and the one of holonomic $\D$-modules.
Although it may be known by experts, it is not in the literature to our knowledge.
The main results are Theorem \ref{main1}, \ref{main2} for the analytic case,
Theorem \ref{main3}, \ref{main4} for the algebraic case.

%\section*{Acknowledgement}
%I would like to thank Dr. Tauchi of Kyushu University
%for many discussions and giving many comments.

\newpage

\section{Preliminary Notions and Results}\label{sec-2}
In this section,
we briefly recall some basic notions
and results which will be used in this paper. 

\subsection{Ind-Sheaves}
Let us briefly recall the notion of ind-sheaves.
References are made to \cite{KS01, KS06}. 

Let $\Bbbk$ be a field and $M$ a good topological space
(that is a topological space which is locally compact, 
Hausdorff and who has countable basis of open sets and finite soft dimension. 
We denote by $\Mod^c(\Bbbk_M)$ the category of sheaves 
of $\Bbbk$-vector spaces on $M$ with compact support.
An ind-sheaf of $\Bbbk$-vector spaces on $M$ is an ind-object of $\Mod^c(\Bbbk_M)$,
that is, inductive limit 
$$\displaystyle``\varinjlim_{i\in I}"\SF_i := \varinjlim_{i\in I}\Hom_{\Mod^c(\Bbbk_M)}(\ \cdot\ ,\ \SF_i)$$
of a small filtrant inductive system $\{\SF_i\}_{i\in I}$ in $\Mod^c(\Bbbk_M)$.
Let us denote by $\I\Bbbk_M$ the category of ind-sheaves of $\Bbbk$-vector spaces on $M$.
Note that it is abelian.
Note also that there exists a natural exact embedding $\iota_M : \Mod(\Bbbk_M)\to\I\Bbbk_M$.
It has an exact left adjoint $\alpha_M$, 
that has in turn an exact fully faithful
left adjoint functor $\beta_M$.
We denote by $\BDC(\I\Bbbk_M)$ the derived category of $\I\Bbbk_M$.
Note that there exist the Grothendieck six operations
$\otimes, \rihom, f^{-1}, \rmR f_\ast, f^!$ and $\rmR f_{!!}$
for a continuous map $f\colon M\to N$.
%Moreover, we set $\rhom := \alpha_M\circ\rihom$. 
Note also that these functors have many properties as similar to classical sheaves.
We shall skip the explanation of it.

\subsection{Subanalytic Sheaves}
We shall briefly recall the notion of subanalytic sheaves.
References are made to \cite[\S 6]{KS01}, \cite{Pre08}.

Let $\Bbbk$ be a field and $M$ a real analytic manifold.
We denote by $\Op^\sub_M$ the category of subanalytic open subsets of $M$.
Then, we can endow $\Op^\sub_M$ with the following Grothendieck topology:
a subset $S\subset \Ob((\Op_M^\sub)_U)$ is a covering of $U\in\Op_M^{\sub}$ 
if for any compact subset $K$ of $M$ there exists a finite subset $S_0\subset S$ of $S$
such that $\displaystyle U\cap K = \left(\bigcup_{V\in S_0}V\right)\cap K$.
We denote by $M^{\sub}$ such a site and call it the subanalytic site.

A subanalytic sheaf of $\Bbbk$-modules on $M$ is
a sheaf of $\Bbbk$-modules on the subanalytic site $M^{\sub}$.
We shall write $\Mod(\Bbbk_{M}^\sub)$ instead of $\Mod(\Bbbk_{M^\sub})$.
Note that it is abelian.
Note also that there exists the natural morphism $\rho_{M}\colon M\to M^\sub$ of sites.
Then we have a natural left exact embedding
$$\rho_{M\ast}\colon \Mod(\CC_M)\to\Mod(\CC_M^{\sub})$$ of categories.
It has an exact left adjoint $\rho_{M}^{-1}$
that has in turn an exact fully faithful
left adjoint functor $\rho_{M!}$.
We denote by $\BDC(\CC_M^{\sub})$ the derived category of $\Mod(\CC_M^{\sub})$.
Note that there exist the Grothendieck six operations 
$ \otimes,\ \rihom^\sub,\ \rmR f_\ast,\ \rmR f_{!!},\ f^{-1},\ f^! $
for a real analytic map $f\colon M\to N$.
Moreover, we set $\rhom^\sub := \rho_{M\ast}\circ\rihom^\sub$.
Note also that these functors have many properties as similar to classical sheaves.
We shall skip the explanation of it.

\subsection{Relation between Ind-sheaves and Subanalytic Sheaves 1}
Let us briefly recall the relation between ind-sheaves and subanalytic sheaves.
References are made to \cite[\S\S 6.3, 7.1]{KS01} and \cite[\S A.2]{Pre13}.

Let $\Bbbk$ be a field and $M$ a real analytic manifold.
We denote
by $\Mod_{\RR\mbox{\scriptsize -}c}^c(\Bbbk_M)$ the abelian category of $\RR$-constructible sheaves on $M$ with compact support
and
denote by $\I_{\RR\mbox{\scriptsize -}c}\Bbbk_M$ the category of ind-objects of $\Mod_{\RR\mbox{\scriptsize -}c}^c(\Bbbk_M)$.
Moreover let us denote by $\BDC_{\I\RR\mbox{\scriptsize -}c}(\I\Bbbk_M)$
the full triangulated subcategory of $\BDC(\I\Bbbk_{M})$
consisting of objects whose cohomologies are contained in $\I_{\RR\mbox{\scriptsize -}c}\Bbbk_{M}$.
Then there exists a functor $J_M\colon \I\Bbbk_M\to\Mod(\Bbbk_M^\sub)$ which is defined by
$$J_M\left(``\varinjlim_{i\in I}"\SF_i\right) := \varinjlim_{i\in I}\rho_{M\ast}\SF_i.$$
Note that for any $F\in\I\Bbbk_M$ the subanalytic sheaf $J_MF$ is given by 
$$\Gamma(U; J_MF) = \Hom_{\I\Bbbk_M}(\iota_M\Bbbk_U, F)$$
for each open subanalytic subset $U$.
Note also that the functor $J_M$ is left exact and admits a left adjoint
$$I_M\colon \Mod(\Bbbk_M^\sub)\to\I\Bbbk_M$$
which is fully faithful, exact and commutes with filtrant inductive limits.
Furthermore, we have:
\begin{theorem}[{\cite[Thm.\:6.3.5]{KS01}}, see also {\cite[A.2.1]{Pre13}}]\label{thm2.26}
There exists an equivalence of abelian categories:
\[\xymatrix@M=7pt@C=45pt{
\Mod(\Bbbk_M^\sub)\ar@<0.8ex>@{->}[r]^-{I_M}_-\sim
&
\I_{\RR\mbox{\scriptsize -}c}\Bbbk_M
\ar@<0.8ex>@{->}[l]^-{J_M}.
}\]
Furthermore, there exists an equivalence of triangulated categories:
\[\xymatrix@M=7pt@C=45pt{
\BDC(\Bbbk_M^\sub)\ar@<0.8ex>@{->}[r]^-{I_M}_-\sim
&
\BDC_{\I{\RR\mbox{\scriptsize -}c}}(\I\Bbbk_M)
\ar@<0.8ex>@{->}[l]^-{\bfR J_M}.
}\]
\end{theorem}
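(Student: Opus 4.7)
The plan is to exploit the adjunction $I_M\dashv J_M$ together with the known properties of $I_M$ (fully faithful, exact, commuting with filtrant inductive limits). Since $I_M$ is fully faithful, the unit $\id\to J_M\circ I_M$ is an isomorphism, so it suffices to verify: (i) $I_M$ sends $\Mod(\Bbbk_M^\sub)$ into the subcategory $\I_{\RR\mbox{\scriptsize -}c}\Bbbk_M$; and (ii) the counit $I_MJ_MF\to F$ is an isomorphism for every $F\in\I_{\RR\mbox{\scriptsize -}c}\Bbbk_M$.

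The decisive computation is the identification $I_M(\rho_{M\ast}G)\simeq\iota_MG$ for every $G\in\Mod_{\RR\mbox{\scriptsize -}c}^c(\Bbbk_M)$. Using the formula $\Gamma(U;J_MH)=\Hom_{\I\Bbbk_M}(\iota_M\Bbbk_U, H)$ combined with Yoneda and the adjunction, this is first verified on the representable generators $\Bbbk_U^\sub$ (for $U$ a relatively compact subanalytic open), where it reads $I_M(\Bbbk_U^\sub)\simeq\iota_M\Bbbk_U$, and then extended to all compactly supported $\RR$-constructible sheaves by exactness of $I_M$ together with a finite presentation by such generators. Since every subanalytic sheaf is a filtrant colimit of objects of the form $\rho_{M\ast}G_i$ with $G_i\in\Mod_{\RR\mbox{\scriptsize -}c}^c(\Bbbk_M)$, and $I_M$ commutes with filtrant colimits, (i) follows.

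For (ii), write $F=``\varinjlim"_{i\in I}G_i$ with $G_i\in\Mod_{\RR\mbox{\scriptsize -}c}^c(\Bbbk_M)$. A direct calculation from the sections formula, exploiting that each $\iota_M\Bbbk_U$ with $U$ relatively compact subanalytic is a compact object of $\I\Bbbk_M$, yields $J_MF\simeq\varinjlim\rho_{M\ast}G_i$. Applying $I_M$ and using its compatibility with filtrant colimits together with the identification of the previous step gives $I_MJ_MF\simeq ``\varinjlim"\iota_MG_i\simeq F$, completing the abelian equivalence.

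For the derived version, $I_M$ extends directly thanks to exactness, and taking cohomology shows that its essential image lies in $\BDC_{\I\RR\mbox{\scriptsize -}c}(\I\Bbbk_M)$. Full faithfulness at the derived level reduces, by a standard spectral sequence argument applied to the counit, to the vanishing $\bfR^k J_M\circ I_M=0$ for $k\ge 1$; equivalently, one shows that $I_M$ sends a generating family of $J_M$-acyclic subanalytic sheaves (such as the $\Bbbk_U^\sub$) to $J_M$-acyclic ind-sheaves. Essential surjectivity onto $\BDC_{\I\RR\mbox{\scriptsize -}c}(\I\Bbbk_M)$ then follows by d\'evissage on the standard $t$-structure, applying the abelian equivalence degree by degree. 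The main obstacle I anticipate is this last acyclicity step, since injective resolutions in $\Mod(\Bbbk_M^\sub)$ and in $\I\Bbbk_M$ are not easily compared directly; a natural route is to produce explicit $J_M$-acyclic resolutions by filtrant ind-objects of compactly supported $\RR$-constructible sheaves and verify the required vanishing through the sections formula for $J_M$.
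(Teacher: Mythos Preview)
The paper does not prove this theorem; it is quoted verbatim as a known result from \cite[Thm.\:6.3.5]{KS01} (see also \cite[A.2.1]{Pre13}) and used as a black box throughout. There is therefore no proof in the paper to compare your proposal against.

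That said, your sketch follows the standard route one finds in the original sources: use the adjunction $(I_M,J_M)$, exploit full faithfulness of $I_M$ so that only the counit needs checking, identify $I_M\rho_{M\ast}G\simeq\iota_MG$ on compactly supported $\RR$-constructible sheaves, and pass to filtrant colimits. The derived statement is then handled by exactness of $I_M$ and a d\'evissage on cohomology. Your identification of the acyclicity of $J_M$ on suitable generators as the delicate point is accurate; in \cite{KS01} this is absorbed into the general machinery of ind-objects and quasi-injectives developed there, rather than argued by hand. If you want to turn your outline into a self-contained proof, that is the place where you would need to invoke or reproduce some of that machinery.
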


We will denote by $\lambda_M$ %\colon \BDC_{\I{\RR\mbox{\scriptsize -}c}}(\I\Bbbk_M)\simto \BDC(\Bbbk_M^\sub)$
the inverse functor of $I_M\colon \BDC(\Bbbk_M^\sub)\simto \BDC_{\I{\RR\mbox{\scriptsize -}c}}(\I\Bbbk_M)$.

\subsection{Bordered spaces}
We shall briefly recall a notion of bordered spaces.
See \cite[\S\:3.2]{DK16} and \cite[\S\:2.1]{DK21} for the details.

A bordered space is a pair $M_{\infty} = (M, \che{M})$ of
a good topological space $\che{M}$ 
and an open subset $M$ of $\che{M}$.
A morphism $f \colon (M, \che{M})\to (N, \che{N})$ of bordered spaces
is a continuous map $f \colon M\to N$ such that the first projection
$\che{M}\times\che{N}\to\che{M}$ is proper on
the closure $\var{\Gamma}_f$ of the graph $\Gamma_f$ of $f$ 
in $\che{M}\times\che{N}$. 
The category of good topological spaces is embedded into that
of bordered spaces by the identification $M = (M, M)$. 
Note that we have the morphism $j_{M_\infty} \colon M_\infty\to \che{M}$
of bordered spaces given by the embedding $M\hookrightarrow \che{M}$.
For a locally closed subset $Z\subset M$ of $M$,
we set $Z_\infty := (Z, \var{Z})$ where $\var{Z}$ is the closure of $Z$ in $\che{M}$
and denote by $i_{Z_\infty} \colon Z_\infty\to \var{Z}$ the morphism of bordered spaces
given by the natural embedding $Z\hookrightarrow \var{Z}$.

\subsection{Ind-Sheaves on Bordered Spaces}
Let us briefly recall the notion of ind-sheaves on bordered spaces.
References are made to \cite[\S 3]{DK16}.

Let $\Bbbk$ be a field and $M_\infty = (M, \che{M})$ a bordered space.
A quotient category
\begin{align*}
\BDC(\I\Bbbk_{M_\infty}) &:= 
\BDC(\I\Bbbk_{\che{M}})/\BDC(\I\Bbbk_{\che{M}\backslash M})
\end{align*}
is called the category of ind-sheaves on $M_{\infty}$,
where $\BDC(\I\Bbbk_{\che{M}\backslash M})$
is identified with its essentially image in $\BDC(\I\Bbbk_{\che{M}})$
by the fully faithful functor $\bfR i_{\che{M}\backslash M !!}$. 
Here $i_{\che{M}\backslash M}\colon \che{M}\backslash M\to M$ is the closed embedding.
An object of $\BDC(\I\Bbbk_{M_\infty})$ is called  an ind-sheaf on $M_{\infty}$.
The quotient functor $$\q_{M_\infty} : \BDC(\I\Bbbk_{\che{M}})\to\BDC(\I\Bbbk_{M_\infty})$$
has fully faithful left and right adjoints
$\bfl_{M_\infty},\bfr_{M_\infty}\colon
\BDC(\I\CC_{M_\infty}) \to\BDC(\I\CC_{\che{M}})$, respectively.
%has a left adjoint $\bfl_{M_\infty}$ and a right 
%adjoint $\bfr_{M_\infty}$, both fully faithful.
Then we have the following standard t-structure on $\BDC(\I\CC_{M_\infty})$
which is induced by the standard t-structure on $\BDC(\I\CC_{\che{M}})$
\begin{align*}
\bfD^{\leq 0}(\I\CC_{M_\infty}) & = \{F\in \BDC(\I\CC_{M_\infty})\ | \ 
\bfl_{M_\infty}F\in \bfD^{\leq 0}(\I\CC_{\che{M}})\},\\
\bfD^{\geq 0}(\I\CC_{M_\infty}) & = \{F\in \BDC(\I\CC_{M_\infty})\ | \ 
\bfl_{M_\infty}F\in \bfD^{\geq 0}(\I\CC_{\che{M}})\}.
\end{align*}

Note that there exists an embedding functor 
$\iota_{M_\infty}\colon\BDC(\Bbbk_{M}) \to \BDC(\I\Bbbk_{M_\infty})$
which have an exact left adjoint
$\alpha_{M_\infty}\colon \BDC(\I\Bbbk_{M_\infty})\to\BDC(\Bbbk_{M})$
that has in turn an exact fully faithful left adjoint functor
$\beta_{M_\infty}\colon\BDC(\Bbbk_{M}) \to \BDC(\I\Bbbk_{M_\infty})$.
Note also that there exist the Grothendieck six operations 
$ \otimes, \rihom, \rmR f_\ast, \rmR f_{!!}, f^{-1}, f^! $
for a morphism $f\colon M_\infty\to N_\infty$ 
of bordered spaces.
Moreover, these functors have many properties as similar to classical sheaves.
We shall skip the explanation of it.
We just recall that the functor
$j_{M_\infty}^{-1}\colon \BDC(\I\Bbbk_{\che{M}})\to\BDC(\I\Bbbk_{M_\infty})$ 
is isomorphic to the quotient functor
and the functor $\bfR j_{M_\infty!!}\colon\BDC(\I\Bbbk_{M_\infty})\to\BDC(\I\Bbbk_{\che{M}})$
(resp.\ $\bfR j_{M_\infty\ast}\colon\BDC(\I\Bbbk_{M_\infty})\to\BDC(\I\Bbbk_{\che{M}})$)
is isomorphic to the functor $\bfl_{M_\infty}$ (resp.\ $\bfr_{M_\infty}$).

It is clear that the quotient category
$$\BDC(\Bbbk_{M_\infty}) := 
\BDC(\Bbbk_{\che{M}})/\BDC(\Bbbk_{\che{M}\backslash M})$$
is equivalent to the derived category $\BDC(\Bbbk_{M})$
of the abelian category $\Mod(\Bbbk_M)$.
We sometimes write $\BDC(\Bbbk_{M_\infty})$ for $\BDC(\Bbbk_{M})$,
when considered as a full subcategory of $\BDC(\I\Bbbk_{M_\infty})$.

\subsection{Subanalytic Sheaves on Real Analytic Bordered spaces}
We shall briefly recall the notion of subanalytic sheaves on real analytic bordered spaces.
References are made to \cite[\S\S 3.4--3.7]{Kas16} and also \cite[\S\:3.1]{Ito21b}\footnote{
In \cite{Kas16},
M.\:Kashiwara introduced the notion of subanalytic sheaves on subanalytic bordered spaces.
However, in \cite{Ito21b}, the author only consider them on real analytic bordered spaces.}.

A real analytic bordered space is a bordered space $M_\infty = (M, \che{M})$
such that $\che{M}$ is a real analytic manifold and $M$ is an open subanalytic subset.
A morphism $f\colon (M, \che{M})\to (N, \che{N})$ of real analytic bordered spaces is
a morphism of  bordered spaces such that
the graph $\Gamma_f$ of $f$ is a subanalytic subset of $\che{M}\times\che{N}$.
The category of real analytic manifolds is embedded into that
of real analytic bordered spaces by the identification $M = (M, M)$. 

\newpage
Let $M_\infty = (M, \che{M})$ be a real analytic bordered space.
We denote by $\Op_{M_\infty}^{\sub}$ the category of open subsets of $M$ 
which are subanalytic in $\che{M}$.
Note that he category $\Op_{M_\infty}^{\sub}$ can be endowed with the following Grothendieck topology:
\begin{center}
a subset $S\subset \Ob\((\Op_{M_\infty}^{\sub})_U\)$ is a covering of $U\in\Ob\(\Op_{M_\infty}^{\sub}\)$ 
if for any compact subset $K$ of $\che{M}$ there exists a finite subset $S'\subset S$ of $S$
such that $\displaystyle K\cap U = K\cap\bigcup_{V\in S'}V.$
\end{center}
See \cite[Prop.\:3.1]{Ito21b} for the details.

Let us denote by $M_\infty^{\sub}$ the site $\Op_{M_\infty}^{\sub}$ with the above Grothendieck topology
and denote by $\Mod(\Bbbk_{M_\infty}^{\sub})$
the category of sheaves of $\Bbbk$-vector spaces on the site $M_\infty^{\sub}$.
Note that the category $\Mod(\Bbbk_{M_\infty}^{\sub})$ is abelian.
Note also that there exists the natural morphism $\rho_{M_\infty}\colon M\to M_\infty^\sub$ of sites.
Then we have a natural left exact embedding
$$\rho_{M_\infty\ast}\colon \Mod(\Bbbk_M)\to\Mod(\Bbbk_{M_\infty}^{\sub})$$ of categories.
It has an exact left adjoint $\rho_{M_\infty}^{-1}\colon \Mod(\Bbbk_{M_\infty}^{\sub})\to \Mod(\Bbbk_M)$, 
that has in turn an exact fully faithful
left adjoint functor $\rho_{M_\infty!}\colon \Mod(\Bbbk_M)\to\Mod(\Bbbk_{M_\infty}^{\sub})$.
Note that the restriction %$\rho_{M_\infty\ast}^{\RR-c}$
of $\rho_{M_\infty\ast}$
to the category $\Mod_{\RR\mbox{\scriptsize -}c}(\Bbbk_M)$ of $\RR$-constructible sheaves is exact.

We denote by $\BDC(\Bbbk_{M_\infty}^{\sub})$ the derived category of $\Mod(\Bbbk_{M_\infty}^{\sub})$.
For a morphism $f\colon M_\infty\to N_\infty$ of real analytic bordered spaces,
we have the Grothendieck operations 
$ \otimes,\ \rihom^\sub,\ \rmR f_\ast,\ \rmR f_{!!},\ f^{-1},\ f^! $
for subanalytic sheaves on bordered spaces
and set $\rhom^\sub := \rho_{M_\infty\ast}\circ\rihom^\sub$.
Note also that these functors have many properties as similar to classical sheaves.
We shall skip the explanation of it.
See \cite[\S\:3.1]{Ito21b} for the details. 

\subsection{Relation between Ind-sheaves and Subanalytic Sheaves 2}
Let us briefly recall the relation between ind-sheaves and subanalytic sheaves on real analytic bordered spaces.
References are made to \cite[\S\:3.1]{Ito21b}.

Let $\Bbbk$ be a field and $M_\infty$ a real analytic bordered space.
We set
\begin{align*}
I_{M_\infty}&\colon\BDC(\Bbbk_{M_\infty}^\sub)\to\BDC(\I\Bbbk_{M_\infty}),
\hspace{7pt}
\SF\mapsto \q_{M_\infty} I_{\che{M}}\bfR j_{M_\infty!!}\SF,\\
\bfR J_{M_\infty}&\colon\BDC(\I\Bbbk_{M_\infty})\to\BDC(\Bbbk_{M_\infty}^\sub),
\hspace{7pt}
F\mapsto j_{M_\infty}^{-1}\bfR J_{\che{M}}\bfR j_{M_\infty\ast}F,
\end{align*}  
and denote by $\BDC_{\I{\RR\mbox{\scriptsize -}c}}(\I\Bbbk_{M_\infty})$ the full triangulated subcategory of $\BDC(\I\Bbbk_{M_\infty})$
consisting of objects such that $\bfR j_{M_\infty!!}F\in\BDC_{\I\RR\mbox{\scriptsize -}c}(\I\Bbbk_{\che{M}})$.
Then we have:

\begin{theorem}[{\cite[Prop.\:3.6]{Ito21b}}]\label{thm2.2}
\begin{itemize}
\item[\rm (1)]
A pair $(I_{M_\infty}, \bfR J_{M_\infty})$ is an adjoint pair
and there exists a canonical isomorphism $\id\simto\bfR J_{M_\infty}\circ I_{M_\infty}$,

\item[\rm (2)]
There exists an equivalence of triangulated categories:
\[\xymatrix@M=7pt@C=45pt{
\BDC(\Bbbk_{M_\infty}^\sub)\ar@<0.8ex>@{->}[r]^-{I_{M_\infty}}_-\sim
&
\BDC_{\I{\RR\mbox{\scriptsize -}c}}(\I\Bbbk_{M_\infty})
\ar@<0.8ex>@{->}[l]^-{\bfR J_{M_\infty}}.
}\]
\end{itemize}
\end{theorem}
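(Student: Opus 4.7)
The plan is to assemble part (1) by composing adjunctions and to reduce part (2) to Theorem \ref{thm2.26} applied on the ambient manifold $\che{M}$. For (1)(a), observe that $I_{M_\infty}$ and $\bfR J_{M_\infty}$ are built from three adjoint pairs:
$(\bfR j_{M_\infty!!},\,j^{-1}_{M_\infty})$ for subanalytic sheaves on the bordered space,
$(I_{\che{M}},\,\bfR J_{\che{M}})$ from Theorem \ref{thm2.26}, and
$(\q_{M_\infty},\,\bfR j_{M_\infty*})$ for ind-sheaves on the bordered space.
Since $I_{M_\infty}$ is the composition of the three left adjoints (in order) and $\bfR J_{M_\infty}$ is the composition of the corresponding right adjoints in reverse order, the pair $(I_{M_\infty}, \bfR J_{M_\infty})$ is automatically adjoint.

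The remaining content rests on a single support-vanishing input for the closed complement $Z := \che{M} \setminus M$: for any $\SF \in \BDC(\Bbbk^\sub_{M_\infty})$ the ind-sheaf $H := I_{\che{M}} \bfR j_{M_\infty!!} \SF$ lies in the essential image of $\bfl_{M_\infty}$, and dually $j^{-1}_{M_\infty} \bfR J_{\che{M}} G \simeq 0$ whenever $G \in \BDC(\I\Bbbk_{\che{M}})$ is supported on $Z$. The first I would deduce by combining the standard compatibility of ind-sheafification with the open embedding $M \hookrightarrow \che{M}$ with the fact that $\bfR j_{M_\infty!!} \SF$ has zero restriction to $Z$ at the subanalytic level. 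The second follows from the explicit formula $\sect(U; J_{\che{M}} G) = \Hom_{\I\Bbbk_{\che{M}}}(\iota_{\che{M}} \Bbbk_U, G)$ for $U \in \Op^\sub_{M_\infty}$, together with the observation that $\Bbbk_U$ has zero stalks on $Z$ (since $U \subset M$). In particular $\bfl_{M_\infty} I_{M_\infty} \SF \simeq H \in \BDC_{\I{\RR\mbox{\scriptsize -}c}}(\I\Bbbk_{\che{M}})$, so $I_{M_\infty}$ already lands in $\BDC_{\I{\RR\mbox{\scriptsize -}c}}(\I\Bbbk_{M_\infty})$.

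For (1)(b), compute
\[
\bfR J_{M_\infty} I_{M_\infty} \SF = j^{-1}_{M_\infty} \bfR J_{\che{M}} \bfR j_{M_\infty*} \q_{M_\infty} H.
\]
The unit $H \to \bfR j_{M_\infty*} \q_{M_\infty} H$ has cone supported on $Z$ and is therefore annihilated by $j^{-1}_{M_\infty} \bfR J_{\che{M}}$, so
\[
\bfR J_{M_\infty} I_{M_\infty} \SF \simeq j^{-1}_{M_\infty} \bfR J_{\che{M}} I_{\che{M}} \bfR j_{M_\infty!!} \SF \simeq j^{-1}_{M_\infty} \bfR j_{M_\infty!!} \SF \simeq \SF
\]
by Theorem \ref{thm2.26} and full faithfulness of $\bfR j_{M_\infty!!}$. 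For essential surjectivity in (2), take $F \in \BDC_{\I{\RR\mbox{\scriptsize -}c}}(\I\Bbbk_{M_\infty})$. By definition $\bfl_{M_\infty} F \in \BDC_{\I{\RR\mbox{\scriptsize -}c}}(\I\Bbbk_{\che{M}})$, so Theorem \ref{thm2.26} yields $\bfl_{M_\infty} F \simeq I_{\che{M}} \SG$ with $\SG := \bfR J_{\che{M}} \bfl_{M_\infty} F$. Setting $\SF := j^{-1}_{M_\infty} \SG$, the counit $\bfR j_{M_\infty!!} \SF \to \SG$ has cone supported on $Z$, hence killed by $\q_{M_\infty} I_{\che{M}}$, yielding
\[
I_{M_\infty} \SF \simeq \q_{M_\infty} I_{\che{M}} \SG \simeq \q_{M_\infty} \bfl_{M_\infty} F \simeq F.
\]
Together with the unit isomorphism from (1)(b), which gives full faithfulness of $I_{M_\infty}$, this establishes the equivalence. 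The main obstacle is the clean verification of the support-vanishing input, which though elementary requires careful bookkeeping with the definitions of $I_{\che{M}}$ and $\bfR J_{\che{M}}$ along the open-closed stratification of $\che{M}$.
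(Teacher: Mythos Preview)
The paper does not itself prove this statement; it is quoted without proof as a preliminary result from \cite[Prop.\:3.6]{Ito21b}, so there is no argument in the present paper to compare against.

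That said, your strategy is sound and is the natural one: decompose $I_{M_\infty}$ and $\bfR J_{M_\infty}$ as the composite of the three adjoint pairs $(\bfR j_{M_\infty!!},\,j^{-1}_{M_\infty})$, $(I_{\che{M}},\,\bfR J_{\che{M}})$, and $(\q_{M_\infty},\,\bfR j_{M_\infty*})$, and then reduce everything to Theorem~\ref{thm2.26} on $\che{M}$ together with support considerations along $Z=\che{M}\setminus M$. The two support-vanishing inputs you isolate are precisely the right hinges. For the second one (that $j^{-1}_{M_\infty}\bfR J_{\che{M}}$ annihilates objects supported on $Z$), the argument via sections of $J_{\che{M}}$ that you sketch only treats the underived functor; a cleaner route that handles $\bfR J_{\che{M}}$ directly is by adjunction: for any $\SF\in\BDC(\Bbbk_{M_\infty}^\sub)$ one has
\[
\Hom_{\BDC(\Bbbk_{M_\infty}^\sub)}(\SF,\,j^{-1}_{M_\infty}\bfR J_{\che{M}}G)\;\simeq\;\Hom_{\BDC(\I\Bbbk_{\che{M}})}(I_{\che{M}}\bfR j_{M_\infty!!}\SF,\,G),
\]
and $I_{\che{M}}\bfR j_{M_\infty!!}\SF\simeq \iota_{\che{M}}\Bbbk_M\otimes I_{\che{M}}\bfR j_{M_\infty!!}\SF$ is supported on $M$, so the right-hand side vanishes whenever $G$ is supported on $Z$; Yoneda then gives $j^{-1}_{M_\infty}\bfR J_{\che{M}}G\simeq 0$. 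With that adjustment your outline goes through.
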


We will denote by $\lambda_{M_\infty}$ %\colon \BDC_{\I{\RR\mbox{\scriptsize -}c}}(\I\Bbbk_{M_\infty})\simto \BDC(\Bbbk_{M_\infty}^\sub)$$
the inverse functor of $I_{M_\infty}\colon \BDC(\Bbbk_{M_\infty}^\sub)
 \simto \BDC_{\I{\RR\mbox{\scriptsize -}c}}(\I\Bbbk_{M_\infty})$.

\subsection{Enhanced Ind-Sheaves}\label{subsec2.7}
We shall briefly recall the notion of enhanced ind-sheaves on bordered spaces.
References are made to \cite{KS16-2} and \cite{DK19}.
We also refer to \cite{DK16} and \cite{KS16}
for the notion of enhanced ind-sheaves on good topological spaces.

Let $\Bbbk$ be a filed and $M_\infty = (M, \che{M})$ a bordered space.
We set $\RR_\infty := (\RR, \var{\RR})$ for 
$\var{\RR} := \RR\sqcup\{-\infty, +\infty\}$,
and let $t\in\RR$ be the affine coordinate. 
We consider the morphism of bordered spaces
$\pi\colon M_\infty \times\RR_\infty\to M_\infty$
given by the projection map $\pi\colon M\times \mathbb{R}\to M, (x,t)\mapsto x$. 
Then the triangulated category of enhanced ind-sheaves on a bordered space $M_\infty$ is defined by 
$$\BEC(\I\Bbbk_{M_\infty}) :=
\BDC(\I\Bbbk_{M_\infty \times\RR_\infty})/\pi^{-1}\BDC(\I\Bbbk_{M_\infty}).$$
An object of $\BEC(\I\Bbbk_{M_\infty})$ is called  an enhanced ind-sheaf on $M_{\infty}$.
The quotient functor
$\Q_{M_\infty} \colon \BDC(\I\Bbbk_{M_\infty\times\RR_\infty})\to\BEC(\I\Bbbk_{M_\infty})$
has fully faithful left and right adjoints
$\bfL_{M_\infty}^{\rmE},\bfR_{M_\infty}^{\rmE} \colon
\BEC(\I\Bbbk_{M_\infty}) \to\BDC(\I\Bbbk_{M_\infty\times\RR_\infty})$, respectively.
Then we have the following standard t-structure on $\BEC(\I\Bbbk_{M_\infty})$
which is induced by the standard t-structure on $\BDC(\I\Bbbk_{M_\infty\times\RR_\infty})$
\begin{align*}
\bfE^{\leq 0}(\I\Bbbk_{M_\infty}) & = \{K\in \BEC(\I\Bbbk_{M_\infty})\ | \ 
\bfL_{M_\infty}^{\rmE}K\in \bfD^{\leq 0}(\I\Bbbk_{M_\infty\times\RR_\infty})\},\\
\bfE^{\geq 0}(\I\Bbbk_{M_\infty}) & = \{K\in \BEC(\I\Bbbk_{M_\infty})\ | \ 
\bfL_{M_\infty}^{\rmE}K\in \bfD^{\geq 0}(\I\Bbbk_{M_\infty\times\RR_\infty})\}.
\end{align*}
%We denote by 
%$\SH^n \colon \BEC(\I\Bbbk_{M_\infty})\to\bfE^0(\I\Bbbk_{M_\infty})$
%the $n$-th cohomology functor, where we set 
We set $$\bfE^0(\I\Bbbk_{M_\infty}) :=
\bfE^{\leq 0}(\I\Bbbk_{M_\infty})\cap\bfE^{\geq 0}(\I\Bbbk_{M_\infty}).$$
For a morphism $f \colon M_\infty\to N_\infty$ of bordered spaces, 
we have the Grothendieck six operations 
$\Potimes,\ \Prihomsub, \bfE f^{-1},\ \bfE f_\ast,\ \bfE f^!,\ \bfE f_{!!}$
for enhanced subanalytic sheaves on bordered spaces.
Note that these functors have many properties as similar to classical sheaves.
We shall skip the explanation of it.
Moreover we have external hom functors
\begin{align*}
\rihom^\rmE&\colon
\BEC(\I\Bbbk_{M_\infty})^\op\times\BEC(\I\Bbbk_{M_\infty})\to \BDC(\I\Bbbk_{M_\infty}),\
(K_1, K_2)\mapsto \bfR\pi_\ast\rihom(\bfL_{M_\infty}^\rmE K_1, \bfL_{M_\infty}^\rmE K_2)
\\
\rhom^\rmE&\colon
\BEC(\I\Bbbk_{M_\infty})^\op\times\BEC(\I\Bbbk_{M_\infty})\to \BDC(\Bbbk_{M}),\
(K_1, K_2)\mapsto \alpha_{M_\infty}\rihom^\rmE(K_1, K_2).
\end{align*}
and functors
\begin{align*}
\pi^{-1}(\cdot)\otimes (\cdot)&\colon
\BDC(\I\Bbbk_{M_\infty})\times \BEC(\I\Bbbk_{M_\infty})\to \BEC(\I\Bbbk_{M_\infty}),\
(F, K)\mapsto \Q_{M_\infty}(\pi^{-1}F\otimes \bfL_{M_\infty}^\rmE K),\\
\rihom(\pi^{-1}(\cdot), \cdot) &\colon
\BDC(\I\Bbbk_{M_\infty})^\op\times \BEC(\I\Bbbk_{M_\infty})\to \BEC(\I\Bbbk_{M_\infty}),\
(F, K)\mapsto \Q_{M_\infty}\big(\rihom(\pi^{-1}F, \bfR_{M_\infty}^\rmE K)\big).
\end{align*}

We set $$\Bbbk_{M_\infty}^\rmE := \Q_{M_\infty}\q_{M_\infty} 
\(``\underset{a\to +\infty}{\varinjlim}"\
\iota_{\che{M}\times\var{\RR}}\Bbbk_{\{t\geq a\}}\)\in\BEC(\I\Bbbk_{M_\infty}),$$
where $\{t \geq a\}$ stands for 
$\{(x, t)\in M\times{\RR}\ |\ x\in M, t \geq a\}$. 
Then we have a natural embedding functor
$$e_{M_\infty} \colon \BDC(\I\Bbbk_{M_\infty}) \to \BEC(\I\Bbbk_{M_\infty}),
\hspace{17pt}
F\mapsto\Bbbk_{M_\infty}^\rmE\otimes\pi^{-1}F.$$
Furthermore, for a continuous function $\varphi \colon U\to \RR$ defined on an open subset $U\subset M$,
we set the exponential enhanced ind-sheaf by 
%\hypertarget{E}{\[\EE_{U|M_\infty}^\varphi \colon= 
%\CC_{M_\infty}^\rmE\Potimes
%\Q_{M_\infty}\big(\CC_{\{t+\varphi\geq0\}}\big), \]}
\[\EE_{U|M_\infty}^\varphi := 
\Bbbk_{M_\infty}^\rmE\Potimes
\Q_{M_\infty}\iota_{M_\infty\times\RR_\infty}\Bbbk_{\{t+\varphi\geq0\}}
, \]
where $\{t+\varphi\geq0\}$ stands for 
$\{(x, t)\in M\times{\RR}\ |\ x\in U, t+\varphi(x)\geq0\}$. 
%$\{(x, t)\in \che{M}\times\var{\RR}\ |\ t\in\RR, x\in U, 
%t+\varphi(x)\geq0\}$. 

\subsection{$\RR$-Constructible Enhanced Ind-Sheaves} 
Let us briefly recall the definition of the $\RR$-constructible enhanced ind-sheaves.
References are made to \cite{DK16, DK19}.

Let $\Bbbk$ be a field and $M_\infty = (M, \che{M})$ a real analytic bordered space.
\begin{definition}[{\cite[Def.\:3.1.2]{DK19}}]
We denote by $\BDC_{\RR\mbox{\scriptsize -}c}(\Bbbk_{M_\infty})$
the full subcategory of $\BDC(\Bbbk_{M_\infty})$
consisting of objects $\SF$ satisfying
$\rmR j_{M_\infty!}\SF$ is an object of $\BDC_{\RR\mbox{\scriptsize -}c}(\Bbbk_{\che{M}})$.
\end{definition}

\begin{definition}[{\cite[Def.\:3.3.1]{DK19}}]\label{def2.3}
We say that $K\in\BEC(\I\Bbbk_{M_\infty})$ is $\RR$-constructible
if for any open subset $U$ of $M$ which is subanalytic and relatively compact in $\che{M}$
there exists $\SF\in\BDC_{\RR\mbox{\scriptsize -}c}(\Bbbk_{U_\infty\times\RR_\infty})$
such that $\bfE i_{U_\infty}^{-1}K\simeq
\Bbbk_{U_\infty}^{\rmE}\Potimes \Q_{U_\infty}\iota_{U_\infty\times\RR_\infty}\SF.$
\end{definition}

We denote by $\BEC_{\RR\mbox{\scriptsize -}c}(\I\Bbbk_{M_\infty})$
the full triangulated subcategory of $\BEC(\I\Bbbk_{M_\infty})$
consisting of $\RR$-constructible enhanced ind-sheaves.
Note that the triangulated category $\BEC_{\RR\mbox{\scriptsize -}c}(\I\Bbbk_{M_\infty})$
has the standard t-structure
$(\bfE^{\leq 0}_{\RR\mbox{\scriptsize -}c}(\I\Bbbk_{M_\infty}),
\bfE^{\geq 0}_{\RR\mbox{\scriptsize -}c}(\I\Bbbk_{M_\infty}))$
which is defined by
\begin{align*}
\bfE^{\leq 0}_{\RR\mbox{\scriptsize -}c}(\I\Bbbk_{M_\infty})
&:= \bfE^{\leq 0}(\I\Bbbk_{M_\infty})\cap \BEC_{\RR\mbox{\scriptsize -}c}(\I\Bbbk_{M_\infty}),\\
\bfE^{\geq 0}_{\RR\mbox{\scriptsize -}c}(\I\Bbbk_{M_\infty})
&:= \bfE^{\geq 0}(\I\Bbbk_{M_\infty})\cap \BEC_{\RR\mbox{\scriptsize -}c}(\I\Bbbk_{M_\infty}).
\end{align*}
%which is induced by the standard t-structure on $\BEC(\I\Bbbk_{M_\infty})$.
Let us set
$\ZEC_{\RR\mbox{\scriptsize -}c}(\I\Bbbk_{M_\infty})
:= \bfE^{\leq 0}_{\RR\mbox{\scriptsize -}c}(\I\Bbbk_{M_\infty})\cap
\bfE^{\geq 0}_{\RR\mbox{\scriptsize -}c}(\I\Bbbk_{M_\infty}).$

\subsection{Enhanced Subanalytic Sheaves}\label{subsec2.10}
Let us briefly recall some basic notions of enhanced subanalytic sheaves on bordered spaces and results on those.
References are made to \cite[\S\:3.3]{Ito21b}.

Let $\Bbbk$ be afield and $M_\infty = (M, \che{M})$ a real analytic bordered space.
We set $\RR_\infty := (\RR, \var{\RR})$ for 
$\var{\RR} := \RR\sqcup\{-\infty, +\infty\}$,
and let $t\in\RR$ be the affine coordinate. 
We consider the morphism of bordered spaces
$\pi\colon M_\infty \times\RR_\infty\to M_\infty$
given by the projection map $\pi\colon M\times \mathbb{R}\to M, (x,t)\mapsto x$. 
Then the triangulated category of enhanced subanalytic sheaves on a bordered space $M_\infty$ is defined by 
$$\BEC(\Bbbk_{M_\infty}^\sub) :=
\BDC(\Bbbk_{M_\infty \times\RR_\infty}^\sub)/\pi^{-1}\BDC(\Bbbk_{M_\infty}^\sub).$$
The quotient functor
$\Q_{M_\infty}^\sub \colon \BDC(\Bbbk_{M_\infty\times\RR_\infty}^\sub)\to\BEC(\Bbbk_{M_\infty}^\sub)$
has fully faithful left and right adjoints
$\bfL_{M_\infty}^{\rmE, \sub},\bfR_{M_\infty}^{\rmE, \sub} \colon
\BEC(\Bbbk_{M_\infty}^\sub) \to\BDC(\Bbbk_{M_\infty\times\RR_\infty}^\sub)$, respectively.
Let us set
\begin{align*}
\bfE^{\leq 0}(\Bbbk_{M_\infty}^\sub) & = \{K\in \BEC(\Bbbk_{M_\infty}^\sub)\ | \ 
\bfL_{M_\infty}^{\rmE, \sub}K\in \bfD^{\leq 0}(\Bbbk_{M_\infty\times\RR_\infty}^\sub)\},\\
\bfE^{\geq 0}(\Bbbk_{M_\infty}^\sub) & = \{K\in \BEC(\Bbbk_{M_\infty}^\sub)\ | \ 
\bfL_{M_\infty}^{\rmE, \sub}K\in \bfD^{\geq 0}(\Bbbk_{M_\infty\times\RR_\infty}^\sub)\},
\end{align*}
where $\(\bfD^{\leq 0}(\Bbbk_{M_\infty\times\RR_\infty}^\sub),
\bfD^{\geq 0}(\Bbbk_{M_\infty\times\RR_\infty}^\sub)\)$
is the standard t-structure on $\BDC(\Bbbk_{M_\infty\times\RR_\infty}^\sub)$.
Note that a pair $$\(\bfE^{\leq 0}(\Bbbk_{M_\infty}^\sub),
\bfE^{\geq 0}(\Bbbk_{M_\infty}^\sub)\)$$
is a t-structure on $\BEC(\Bbbk_{M_\infty}^\sub)$.
%We denote by 
%\[\SH^n \colon \BEC(\Bbbk_{M_\infty}^\sub)\to\bfE^0(\Bbbk_{M_\infty}^\sub)\]
%the $n$-th cohomology functor, where we set 
We set $$\bfE^0(\Bbbk_{M_\infty}^\sub) :=
\bfE^{\leq 0}(\Bbbk_{M_\infty}^\sub)\cap\bfE^{\geq 0}(\Bbbk_{M_\infty}^\sub).$$
For a morphism $f \colon M_\infty\to N_\infty$ of bordered spaces, 
we have the Grothendieck six operations 
$\Potimes,\ \Prihomsub, \bfE f^{-1},\ \bfE f_\ast,\ \bfE f^!,\ \bfE f_{!!}$
for enhanced subanalytic sheaves on bordered spaces.
Note that these functors have many properties as similar to classical sheaves.
We shall skip the explanation of it.
Moreover, we have functors
{\small\begin{align*}
\pi^{-1}(\cdot)\otimes (\cdot)&\colon
\BDC(\Bbbk_{M_\infty}^\sub)\times \BEC(\Bbbk_{M_\infty}^\sub)\to \BEC(\Bbbk_{M_\infty}^\sub),\
(\SF, K)\mapsto \Q_{M_\infty}^\sub(\pi^{-1}\SF\otimes \bfL_{M_\infty}^{\rmE, \sub} K),\\
\rihom^\sub(\pi^{-1}(\cdot), \cdot) &\colon
\BDC(\Bbbk_{M_\infty}^\sub)^\op\times \BEC(\Bbbk_{M_\infty}^\sub)\to \BEC(\Bbbk_{M_\infty}^\sub),\
(\SF, K)\mapsto \Q_{M_\infty}^\sub\big(\rihom^\sub(\pi^{-1}\SF, \bfR_{M_\infty}^{\rmE, \sub} K)\big).
\end{align*}}

We set $$\Bbbk_{M_\infty}^{\rmE, \sub} := \Q_{M_\infty}^\sub 
\(\underset{a\to +\infty}{\varinjlim}\ \rho_{M_\infty\times\RR_\infty\ast}\Bbbk_{\{t\geq a\}}
\)\in\BEC(\Bbbk_{M_\infty}^\sub)$$
where $\{t\geq a\}$ stands for $\{(x, t)\in M\times\RR\ |\ t\geq a\}\subset \che{M}\times\var{\RR}$.
Then we have a natural embedding
$$e_{M_\infty}^\sub \colon \BDC(\Bbbk_{M_\infty}^\sub) \to \BEC(\Bbbk_{M_\infty}^\sub),
\hspace{7pt}
\SF\mapsto \pi^{-1}\SF\otimes\Bbbk_{M_\infty}^{\rmE, \sub},$$
see \cite[Prop.\:3.23]{Ito21b} for the details.
Moreover, for an analytic function $\varphi \colon U\to \RR$ defined on an open subset $U\subset M$,
we set %the exponential enhanced ind-sheaf by 
%\hypertarget{E}{\[\EE_{U|M_\infty}^\varphi \colon= 
%\CC_{M_\infty}^\rmE\Potimes
%\Q_{M_\infty}\big(\CC_{\{t+\varphi\geq0\}}\big), \]}
\[\EE_{U|M_\infty}^{\varphi ,\sub}:= 
\Bbbk_{M_\infty}^{\rmE,\sub}\Potimes
\Q_{M_\infty}^\sub\big(\Bbbk_{\{t+\varphi = 0\}}\big)
, \]
where $\{t+\varphi = 0\}$ stands for 
$\{(x, t)\in \che{M}\times\var{\RR}\ |\ t\in\RR, x\in U, 
t+\varphi(x) = 0\}$.

At the end of this subsection,
let us recall the definition of $\RR$-constructible enhanced subanalytic sheaves.
\begin{definition}\label{def3.21}
We say that an enhanced subanalytic sheaf $K$ is $\RR$-constructible 
if for any open subset $U$ of $M$ which is subanalytic and relatively compact in $\che{M}$
there exists $\SF\in\BDC_{\RR\mbox{\scriptsize -}c}(\Bbbk_{U_\infty\times\RR_\infty})$
such that $$\bfE i_{U_\infty}^{-1}K\simeq
\Bbbk_{U_\infty}^{\rmE, \sub}\Potimes \Q_{U_\infty}^\sub\rho_{U_\infty\times\RR_\infty\ast}\SF.$$
\end{definition}

We denote by $\BEC_{\RR\mbox{\scriptsize -}c}(\Bbbk_{M_\infty}^\sub)$
the full triangulated subcategory of $\BEC(\Bbbk_{M_\infty}^\sub)$
consisting of $\RR$-constructible enhanced subanalytic sheaves.

\subsection{Relation between Ind-sheaves and Subanalytic Sheaves 2}\label{2.11}
We shall briefly recall the relation between enhanced ind-sheaves and enhanced subanalytic sheaves.
References are made to \cite[\S\:3.4]{Ito21b}.

Let $\Bbbk$ be a field and $M_\infty$ a real analytic bordered space.
We set
\begin{align*}
I_{M_\infty}^\rmE&\colon\BEC(\Bbbk_{M_\infty}^\sub)\to\BEC(\I\Bbbk_{M_\infty}),
\hspace{7pt}
\Q_{M_\infty}^\sub\SF\mapsto \Q_{M_\infty}I_{M_\infty\times\RR_\infty}\SF,\\
J_{M_\infty}^\rmE&\colon\BEC(\I\Bbbk_{M_\infty})\to\BEC(\Bbbk_{M_\infty}^\sub),
\hspace{7pt}
\Q_{M_\infty}F\mapsto \Q_{M_\infty}^\sub\bfR J_{M_\infty\times\RR_\infty}F.
\end{align*} 
and
$$
\BEC_{\I\RR\mbox{\scriptsize -}c}(\I\Bbbk_{M_\infty}) :=
\BDC_{\I\RR\mbox{\scriptsize -}c}(\I\Bbbk_{M_\infty \times\RR_\infty})/\pi^{-1}\BDC_{\I\RR\mbox{\scriptsize -}c}(\I\Bbbk_{M_\infty}). 
$$
Note that $\BEC_{\I\RR\mbox{\scriptsize -}c}(\I\Bbbk_{M_\infty})$ is
a full triangulated subcategory of $\BEC_{\RR\mbox{\scriptsize -}c}(\I\Bbbk_{M_\infty})$.
Then we have:

\begin{theorem}[{\cite[Thms.\:3.17, 3.22]{Ito21b}}]\label{thm2.6}
\begin{itemize}
\item[\rm (1)]
A pair $(I_{M_\infty}^\rmE, J_{M_\infty}^\rmE)$ is an adjoint pair
and there exists a canonical isomorphism $\id\simto J_{M_\infty}^\rmE\circ I_{M_\infty}^\rmE$,

\item[\rm (2)]
There exists an equivalence of triangulated categories:
\[\xymatrix@M=7pt@C=45pt{
\BEC(\Bbbk_{M_\infty}^\sub)\ar@<0.8ex>@{->}[r]^-{I_{M_\infty}^\rmE}_-\sim
&
\BEC_{\I{\RR\mbox{\scriptsize -}c}}(\I\Bbbk_{M_\infty})
\ar@<0.8ex>@{->}[l]^-{J_{M_\infty}^\rmE}.
}\]

\item[\rm (3)]
there exists an equivalence of triangulated categories:
\[\xymatrix@M=7pt@C=45pt{
\BEC_{\RR\mbox{\scriptsize -}c}(\Bbbk_{M_\infty}^\sub)\ar@<0.8ex>@{->}[r]^-{I_{M_\infty}^\rmE}_-\sim
&
\BEC_{\RR\mbox{\scriptsize -}c}(\I\Bbbk_{M_\infty})
\ar@<0.8ex>@{->}[l]^-{J_{M_\infty}^\rmE}.
}\]
\end{itemize}
\end{theorem}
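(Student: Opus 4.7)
The plan is to bootstrap the statement from Theorem~\ref{thm2.2} applied to the real analytic bordered space $M_\infty\times\RR_\infty$: that theorem already supplies the adjunction $(I_{M_\infty\times\RR_\infty}, \bfR J_{M_\infty\times\RR_\infty})$, the unit isomorphism, and, when restricted to $\I\RR$-constructible objects, an equivalence. Since $\BEC(\Bbbk_{M_\infty}^\sub)$ and $\BEC(\I\Bbbk_{M_\infty})$ are defined as Verdier quotients of their non-enhanced counterparts on $M_\infty\times\RR_\infty$ by $\pi^{-1}\BDC(\cdots)$, the entire task reduces to showing that the non-enhanced functors descend to these quotients and then transporting the adjunction and equivalence through the quotient functors $\Q^\sub_{M_\infty}$ and $\Q_{M_\infty}$.

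The first and essential step is to establish the compatibility of $I_{-}$ and $\bfR J_{-}$ with $\pi^{-1}$, i.e.\ natural isomorphisms
\[
I_{M_\infty\times\RR_\infty}\circ\pi^{-1}\simeq\pi^{-1}\circ I_{M_\infty},\qquad
\bfR J_{M_\infty\times\RR_\infty}\circ\pi^{-1}\simeq\pi^{-1}\circ \bfR J_{M_\infty}.
\]
The first should follow from the commutation of $\rho_{-!}$ with $\pi^{-1}$ on the classical side, together with the formula $I_{M_\infty}(\cdot)=\q_{M_\infty}I_{\che M}\bfR j_{M_\infty!!}(\cdot)$ and the analogous one on $M_\infty\times\RR_\infty$. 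The second is more delicate because $\bfR J$ is a right adjoint; I would first establish it on $\I\RR$-constructible objects, where $\bfR J$ is quasi-inverse to $I$ by Theorem~\ref{thm2.2}(2), so that the second isomorphism is forced by the first. These commutations guarantee that both functors send $\pi^{-1}\BDC(\cdots)$ into $\pi^{-1}\BDC(\cdots)$, hence descend to well defined functors $I^\rmE_{M_\infty}$ and $J^\rmE_{M_\infty}$ on the quotients. The adjunction and unit isomorphism of part~(1) then follow formally by applying $\Q^\sub_{M_\infty}$ and $\Q_{M_\infty}$ to the corresponding data at the unquotiented level.

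For part~(2), since $\BEC_{\I\RR\mbox{\scriptsize -}c}(\I\Bbbk_{M_\infty})$ is defined as the quotient $\BDC_{\I\RR\mbox{\scriptsize -}c}(\I\Bbbk_{M_\infty\times\RR_\infty})/\pi^{-1}\BDC_{\I\RR\mbox{\scriptsize -}c}(\I\Bbbk_{M_\infty})$, the equivalence of Theorem~\ref{thm2.2}(2) on $M_\infty\times\RR_\infty$, combined with the matching of the subcategories $\pi^{-1}\BDC(\cdots)$ established above, descends directly to the desired equivalence by the universal property of Verdier quotients. For part~(3), both Definition~\ref{def2.3} and Definition~\ref{def3.21} describe $\RR$-constructibility of $K$ locally, on relatively compact subanalytic opens $U\subset M$, as a twist $\Bbbk_{U_\infty}^{\rmE,(\sub)}\Potimes \Q_{U_\infty}^{(\sub)}(-)$ of an $\RR$-constructible classical sheaf $\SF$ through $\iota$ or $\rho_\ast$ respectively. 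Since $I$ sends $\rho_{-\ast}\SF$ to $\iota_{-}\SF$ on $\RR$-constructible $\SF$ by Theorem~\ref{thm2.26}, and since $I^\rmE_{M_\infty}$ intertwines the tensor and quotient operations defining the two standard forms, the equivalence of part~(2) restricts to the $\RR$-constructible subcategories, giving part~(3).

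The main obstacle, I expect, is precisely the compatibility $\bfR J_{-}\circ\pi^{-1}\simeq\pi^{-1}\circ\bfR J_{-}$: the functor $\bfR J$ is expressed through a filtrant inductive limit involving $\Hom(\iota\Bbbk_U,\cdot)$, so its commutation with the pullback $\pi^{-1}$ is not automatic and must be treated carefully on enough generators. This is most transparent on $\I\RR$-constructible objects, which is precisely the technical reason why one is forced to work at the level of the equivalence (rather than merely the adjunction) for parts~(2) and~(3). Once this compatibility is in hand, the remainder of the argument is formal Verdier-quotient category theory.
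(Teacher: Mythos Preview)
This theorem is not proved in the present paper: it appears in \S\ref{2.11} as a preliminary result quoted from \cite[Thms.\:3.17, 3.22]{Ito21b}, with no argument given here. So there is no in-paper proof to compare your proposal against.

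Your strategy is nonetheless the natural one and aligns with how $I_{M_\infty}^\rmE$ and $J_{M_\infty}^\rmE$ are \emph{defined} in \S\ref{2.11}: they are literally $I_{M_\infty\times\RR_\infty}$ and $\bfR J_{M_\infty\times\RR_\infty}$ composed with the quotient functors, so the substance is exactly the descent through the Verdier quotients that you outline, and parts~(2) and~(3) then follow from Theorem~\ref{thm2.2} and the local characterizations in Definitions~\ref{def2.3} and~\ref{def3.21}. One point deserves more care than you give it. For $J_{M_\infty}^\rmE$ to be well defined on \emph{all} of $\BEC(\I\Bbbk_{M_\infty})$ in part~(1), you need $\bfR J_{M_\infty\times\RR_\infty}$ to carry the entire kernel $\pi^{-1}\BDC(\I\Bbbk_{M_\infty})$ into $\pi^{-1}\BDC(\Bbbk_{M_\infty}^\sub)$, not only its $\I\RR$-constructible part; so establishing $\bfR J\circ\pi^{-1}\simeq\pi^{-1}\circ\bfR J$ only on $\I\RR$-constructible objects, as you propose, would not close the argument for well-definedness. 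A cleaner way is to characterize the kernel of $\Q_{M_\infty}$ by the vanishing of the convolution $(\Bbbk_{\{t\geq0\}}\oplus\Bbbk_{\{t\leq0\}})\Potimes(\cdot)$ (equivalently $\bfL_{M_\infty}^\rmE\Q_{M_\infty}(\cdot)=0$) and verify that $\bfR J$ commutes with convolution by an $\RR$-constructible kernel; this is the type of compatibility recorded in \cite[Prop.\:3.7]{Ito21b} and used repeatedly elsewhere in the paper.
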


We will denote by $\lambda_{M_\infty}^\rmE$%\colon \BEC_{\I{\RR-c}}(\I\Bbbk_{M_\infty})\simto \BEC(\Bbbk_{M_\infty}^\sub)$
 the inverse functor of
$I_{M_\infty}^\rmE\colon \BEC(\Bbbk_{M_\infty}^\sub)\simto \BEC_{\I{\RR-c}}(\I\Bbbk_{M_\infty})$.
Let us summarize the relations between enhanced subanalytic sheaves and enhanced ind-sheaves
 in the following commutative diagram:
\[\xymatrix@M=10pt@R=30pt@C=85pt{
\BEC(\Bbbk_{M_\infty}^\sub)\ar@<0.7ex>@{^{(}->}[r]^-{I_{M_\infty}^\rmE}
\ar@<-0.7ex>@{->}[rd]^-{I_{M_\infty}^\rmE}_-\sim
 & \BEC(\I\Bbbk_{M_\infty})\ar@<0.7ex>@{->}[l]^-{J_{M_\infty}^\rmE}\\
\BEC_{\RR-c}(\Bbbk_{M_\infty}^\sub)\ar@{}[u]|-{\bigcup}
\ar@<0.7ex>@{->}[rd]^-{I_{M_\infty}^\rmE}_-\sim
& \BEC_{\I\RR-c}(\I\Bbbk_{M_\infty}).\ar@{}[u]|-{\bigcup}
\ar@<2.5ex>@{->}[lu]^-{\lambda_{M_\infty}^\rmE}\\
{} & \BEC_{\RR-c}(\I\Bbbk_{M_\infty})\ar@{}[u]|-{\bigcup}
\ar@<1.0ex>@{->}[lu]^-{\lambda_{M_\infty}^\rmE}.
}\]

Furthermore, we have commutative diagrams:
\[\xymatrix@M=7pt@C=45pt@R=35pt{
\BDC(\Bbbk_{M_\infty}^\sub)\ar@{->}[r]^-{e_{M_\infty}^\sub}\ar@{->}[d]_-{I_{M_\infty}}^-\wr
&
\BEC(\Bbbk_{M_\infty}^\sub)\ar@{->}[d]^-{I_{M_\infty}^\rmE}_-\wr\\
\BDC_{\I\RR-c}(\I\Bbbk_{M_\infty})\ar@{->}[r]_-{e_{M_\infty}}
&
\BEC_{\I\RR-c}(\I\Bbbk_{M_\infty}),}
\hspace{27pt}
\xymatrix@M=7pt@C=45pt@R=35pt{
\BDC_{\I\RR-c}(\I\Bbbk_{M_\infty})\ar@{->}[r]^-{e_{M_\infty}}\ar@{->}[d]_-{\lambda_{M_\infty}}^-\wr
&
\BEC_{\I\RR\mbox{\scriptsize -}c}(\I\Bbbk_{M_\infty})\ar@{->}[d]^-{\lambda_{M_\infty}^\rmE}_-\wr\\
\BDC(\Bbbk_{M_\infty}^\sub)\ar@{->}[r]_-{e_{M_\infty}^\sub}
&
\BEC(\Bbbk_{M_\infty}^\sub).}
\]

\subsection{$\CC$-Constructible Enhanced Ind-Sheaves}
We shall briefly recall the notion of $\CC$-constructability for enhanced ind-sheaves.
References are made to \cite[\S\:3]{Ito20} for the analytic case
and \cite[\S\S\:3.1. 3.2]{Ito21} for the algebraic case.

\subsubsection{Analytic Case}\label{subsec_ana}
Let $X$ be a complex manifold and 
$D \subset X$ a normal crossing divisor on it. 
Let us take local coordinates 
$(u_1, \ldots, u_l, v_1, \ldots, v_{d_X-l})$ 
of $X$ such that $D= \{ u_1 u_2 \cdots u_l=0 \}$
and set $Y= \{ u_1=u_2= \cdots =u_l=0 \}.$
We define a partial order $\leq$ on the 
set $\ZZ^l$ by 
$$a \leq a^{\prime} \ :\Longleftrightarrow 
\ a_i \leq a_i^{\prime} \ (1 \leq {}^\forall i \leq l),$$
for $a = (a_1, \ldots, a_l),\ a' = (a'_1, \ldots, a'_l) \in \ZZ^l$.
Then for a meromorphic function $\varphi\in\SO_{X}(\ast D)$
on $X$ along $D$ which have the Laurent expansion
\[ \varphi = \sum_{a \in \ZZ^l} c_a( \varphi )(v) \cdot 
u^a \ \in \SO_{X}(\ast D) \]
with respect to $u_1, \ldots, u_{l}$,
where $c_a( \varphi )$ are holomorphic functions on $Y$, 
we define its order 
$\ord( \varphi ) \in \ZZ^l$ by the minimum 
\[ \min \Big( \{ a \in \ZZ^l \ | \
c_a( \varphi ) \not= 0 \} \cup \{ 0 \} \Big) \]
if it exists. 
For any $f\in\SO_{X}(\ast D)/ \SO_{X}$, we take any lift $\tl{f}$ to $\SO_{X}(\ast D)$,
and we set $\ord(f) := \ord(\tl{f})$, if the right-hand side exists.
Note that it is independent of the choice of a lift $\tl{f}$.
If $\ord(f)\neq0$, $c_{\ord(f)}(\tl{f})$ is independent of the choice of a lift $\tl{f}$,
which is denoted by $c_{\ord(f)}(f)$.
\begin{definition}[{\cite[Def.\:2.1.2]{Mochi11}}]\label{def2.10}
In the situation as above,
a finite subset $\calI \subset \SO_{X}(\ast D)/ \SO_{X}$
is called a good set of irregular values on $(X,D)$,
if the following conditions are satisfied:
\begin{itemize}
\setlength{\itemsep}{-1pt}
\item[-]
For each element $f\in\calI$, $\ord(f)$ exists.
If $f\neq0$ in $\SO_{X}(\ast D)/ \SO_{X}$, $c_{\ord(f)}(f)$ is invertible on $Y$.
\item[-]
For two distinct $f, g\in\calI$, $\ord(f-g)$ exists and
$c_{\ord(f-g)}(f-g)$ is invertible on $Y$.
\item[-]
The set $\{\ord(f-g)\ |\ f, g\in\calI\}$ is totally ordered
with respect to the above partial order $\leq$ on $\ZZ^l$.
\end{itemize}
\end{definition}

\begin{definition}[{\cite[Def.\:3.5]{Ito20}}]\label{def-normal}
In the situation as above,
we say that an enhanced ind-sheaf
$K\in\ZEC(\I\CC_X)$ has a normal form along $D$
if the following three conditions are satisfied: 
\begin{itemize}
\setlength{\itemsep}{-1pt}
\item[(i)]
$\pi^{-1}(\iota_X\CC_{X\setminus D})\otimes K\simto K$,

\item[(ii)]
for any $x\in X\setminus D$ there exist an open neighborhood $U_x\subset X\setminus D$
of $x$ and a non-negative integer $k$ such that
$K|_{U_x}\simeq (\CC_{U_x}^{\rmE})^{\oplus k},$

\item[(iii)]
for any $x\in D$ there exist an open neighborhood $U_x\subset X$ of $x$,
a good set of irregular values $\{\varphi_i\}_i$ on $(U_x, D\cap U_x)$
and a finite sectorial open covering $\{U_{x, j}\}_j$ of $U_x\bs D$
such that
\[\pi^{-1}(\iota_{U_x}\CC_{U_{x, j}})\otimes K|_{U_x}\simeq
\bigoplus_i \EE_{U_{x, j} | U_x}^{\Re\varphi_i}
\hspace{10pt} \mbox{for any } j.\]
%see the end of \S\ref{subsec2.3} for the definition of 
%\hyperlink{E}{$\EE_{U_{x, j} | U_x}^{\Re\varphi_i}$}
%$\EE_{U_{x, j} | U_x}^{\Re\varphi_i}$.
\end{itemize}
\end{definition}
\newpage

In \cite[Def.\:3.5]{Ito20}, 
we assumed that $K$ is $\RR$-constructible.
However, it is not necessary.
Namely, any enhanced ind-sheaf which have a normal form along $D$ which is defined by above
is an $\RR$-constructible enhanced ind-sheaf.
See \cite[Prop.\:2.11]{Ito23} for the details.

A ramification of $X$ along a normal crossing divisor $D$ on a neighborhood $U$ 
of $x \in D$ is a finite map $r \colon U^{\rami}\to U$ of complex manifolds of the form
$z' \mapsto z=(z_1,z_2, \ldots, z_n)= (z'^{m_1}_1,\ldots, z'^{m_k}_k, z'_{k+1},\ldots,z'_n)$ 
for some $(m_1, \ldots, m_k)\in (\ZZ_{>0})^k$, where 
$(z'_1,\ldots, z'_n)$ is a local coordinate system of $U^{\rami}$ and 
$(z_1, \ldots, z_n)$ is the one of 
$U$ such that $D \cap U=\{z_1\cdots z_k=0\}$. 

\begin{definition}[{\cite[Def.\:3.11]{Ito20}}]\label{def-quasi}
We say that an enhanced ind-sheaf
$K\in\ZEC(\I\CC_{X})$ has a quasi-normal form along $D$
if it satisfies (i) and (ii) in Definition \ref{def-normal}, and if
for any $x\in D$ there exist an open neighborhood $U_x\subset X$ of $x$
and a ramification $r_x \colon U_x^{\rami}\to U_x$ of $U_x$ along $D_x := U_x\cap D$
such that $\bfE r_x^{-1}(K|_{U_x})$ has a normal form along $D_x^{\rami}:= r_x^{-1}(D_x)$.
\end{definition}
Note that any enhanced ind-sheaf which have a quasi-normal form along $D$
is an $\RR$-constructible enhanced ind-sheaf on $X$.
See \cite[Prop.\:3.12]{Ito20} for the details.

A modification of $X$ with respect to an analytic hypersurface $H$
is a projective map $m \colon X^{\modi}\to X$ from a complex manifold $X^{\modi}$ to $X$ such that
$D^{\modi} := m^{-1}(H)$ is a normal crossing divisor of $X^{\modi}$
and $m$ induces an isomorphism $X^{\modi}\setminus D^{\modi}\simto X\setminus H$.

\begin{definition}[{\cite[Def.\:3.14]{Ito20}}]\label{def-modi}
We say that an enhanced ind-sheaf $K\in\ZEC(\I\CC_{X})$
has a modified quasi-normal form along $H$
if it satisfies (i) and (ii) in Definition \ref{def-normal}, and if
for any $x\in H$ there exist an open neighborhood $U_x\subset X$ of $x$
and a modification $m_x \colon U_x^{\modi}\to U_x$ of $U_x$ along $H_x := U_x\cap H$
such that $\bfE m_x^{-1}(K|_{U_x})$ has a quasi-normal form along $D_x^{\modi} := m_x^{-1}(H_x)$.
\end{definition}
Note that any enhanced ind-sheaf which have a modified quasi-normal form along $H$
is an $\RR$-constructible enhanced ind-sheaf on $X$. 
See \cite[Prop.\:3.15]{Ito20} for the details. 

A complex analytic stratification of $X$ is
a locally finite partition $\{X_\alpha\}_{\alpha\in A}$ of $X$
by locally closed analytic subsets $X_\alpha$
such that for any $\alpha\in A$, $X_\alpha$ is smooth,
$\var{X}_\alpha$ and $\partial{X_\alpha} := \var{X}_\alpha\setminus X_\alpha$
are complex analytic subsets 
and $\var{X}_\alpha = \bigsqcup_{\beta\in B} X_{\beta}$ for a subset $B\subset A$.
\begin{definition}[{\cite[Def.\:3.19]{Ito20}}]\label{def-const}
We say that an enhanced ind-sheaf $K\in\ZEC(\I\CC_{X})$ is $\CC$-constructible if
there exists a complex analytic stratification $\{X_\alpha\}_\alpha$ of $X$
such that
$$\pi^{-1}(\iota_{\var{X}^{\blow}_\alpha}\CC_{\var{X}^{\blow}_\alpha\setminus D_\alpha})
\otimes \bfE b_\alpha^{-1}K$$
has a modified quasi-normal form along $D_\alpha$ for any $\alpha$.
Here $b_\alpha \colon \var{X}^{\blow}_\alpha \to X$ is a complex blow-up of $\var{X_\alpha}$
along $\partial X_\alpha = \var{X_\alpha}\setminus X_\alpha$ and
$D_\alpha := b_\alpha^{-1}(\partial X_\alpha)$.
Namely $\var{X}^{\blow} _\alpha$ is a complex manifold,
$D_\alpha$ is a normal crossing divisor of $\var{X}^{\blow} _\alpha$
and $b_\alpha$ is a projective map
which induces an isomorphism $\var{X}^{\blow} _\alpha\setminus D_\alpha\simto X_\alpha$
and satisfies $b_\alpha\big(\var{X}^{\blow} _\alpha\big)=\var{X_\alpha}$.
\end{definition}

%\begin{remark}\label{rem2.25}
%Definition \ref{def-const} does not depend on the choice of a complex blow-up $b_\alpha$
%by \cite[Sublem.\:3.22]{Ito20}.
%\end{remark}

We denote by $\ZEC_{\CC\mbox{\scriptsize -}c}(\I\CC_{X})$ the full subcategory of $\ZEC(\I\CC_{X})$
whose objects are $\CC$-constructible
and denote by $\BEC_{\CC\mbox{\scriptsize -}c}(\I\CC_{X})$
the full triangulated subcategory of $\BEC(\I\CC_{X})$
consisting of objects whose cohomologies are contained in $\ZEC_{\CC\mbox{\scriptsize -}c}(\I\CC_{X})$.
%\[\BEC_{\CC\mbox{\scriptsize -}c}(\I\CC_{X}) := \{K\in\BEC(\I\CC_{X})\
%|\ \SH^i(K)\in\ZEC_{\CC\mbox{\scriptsize -}c}(\I\CC_{X}) \mbox{ for any }i\in\ZZ \}\subset \BEC(\I\CC_{X}).\]
Note that the category $\BEC_{\CC\mbox{\scriptsize -}c}(\I\CC_{X})$ is
the full triangulated subcategory of $\BEC_{\RR\mbox{\scriptsize -}c}(\I\CC_{X})$.
See \cite[Prop.\:3.21]{Ito20} for the details.
Furthermore we have:
\begin{theorem}[{\cite[Prop.\:3.25, Thm.\:3.26]{Ito20}}]\label{thm-Ito20}
There exists an equivalence of triangulated categories:
$$\Sol_{X}^{\rmE}\colon \BDChol(\D_{X})^{\op}\simto \BEC_{\CC\mbox{\scriptsize -}c}(\I\CC_{X}).$$
\end{theorem}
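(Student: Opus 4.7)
The plan is to establish the equivalence by combining the fully faithful functor of D'Agnolo--Kashiwara (Theorem \ref{irreg_DK}) with a local structure analysis of both sides. Since $\Sol_X^{\rmE}$ is already fully faithful from $\BDChol(\D_X)^{\op}$ into $\BEC_{\RR\mbox{\scriptsize -}c}(\I\CC_X)$, what remains is to check two inclusions of essential images: (i) that $\Sol_X^{\rmE}(\M)$ is $\CC$-constructible for every holonomic $\D_X$-module $\M$, and (ii) that every $\CC$-constructible enhanced ind-sheaf lies in the image of $\Sol_X^{\rmE}$.

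For (i), I would use a complex analytic stratification $\{X_\alpha\}$ of $X$ subordinate to the characteristic variety of $\M$, chosen so that $\M$ restricts to an integrable connection on each open stratum with singularities contained in the boundary. After pulling back by the blow-up $b_\alpha \colon \var{X}^{\blow}_\alpha \to X$ with exceptional divisor $D_\alpha = b_\alpha^{-1}(\partial X_\alpha)$, the asymptotic structure theorem of Mochizuki and Kedlaya for good meromorphic flat connections provides, after an additional local modification and ramification $m_x\circ r_x$, a sectorial decomposition of the relevant pullback of $\M$ into pieces of the form $\E^{\varphi}\otimes \R$, with $\varphi$ belonging to a good set of irregular values in the sense of Definition \ref{def2.10} and $\R$ regular holonomic. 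Applying $\Sol_X^{\rmE}$ and using its compatibility with exponential $\D$-modules (namely that $\Sol_X^{\rmE}(\E^{\varphi})$ matches the exponential enhanced ind-sheaf $\EE_{U|X}^{\Re\varphi}$ up to shift), this asymptotic decomposition translates precisely into the modified quasi-normal form condition of Definition \ref{def-modi} along $D_\alpha$, hence $\Sol_X^{\rmE}(\M)$ satisfies Definition \ref{def-const}.

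For (ii), the strategy is devissage along the stratification appearing in Definition \ref{def-const}. Given a $\CC$-constructible $K$, on each chart where the modification $m_x$ and the ramification $r_x$ trivialise the structure, $K$ becomes a finite direct sum $\bigoplus_i \EE_{U_{x,j}|U_x}^{\Re\varphi_i}$ of exponential enhanced ind-sheaves, each of which is isomorphic to $\Sol_X^{\rmE}$ applied to an exponential $\D$-module $\E^{\varphi_i}$. Full faithfulness of $\Sol_X^{\rmE}$ promotes every local isomorphism between these model enhanced ind-sheaves to a unique morphism of $\D$-modules, so that the local pieces can be glued across the sectors of the covering $\{U_{x,j}\}_j$, then descended through $r_x$ (by Galois invariants) and through $m_x$ (by proper direct image in the $\D$-module sense, which preserves holonomicity by Kashiwara's theorem), and finally assembled along the stratification via the standard recollement of holonomic $\D$-modules. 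This produces a global $\M \in \BDChol(\D_X)$ with $\Sol_X^{\rmE}(\M)\simeq K$.

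The principal obstacle is this gluing/descent step in (ii): one must verify that the locally constructed holonomic $\D$-modules are compatible on the overlaps of the various charts and sectors, that descent through ramification and through projective modification actually lands back in $\BDChol(\D_X)$ rather than in a larger derived category, and that the output remains a \emph{bounded} complex. Full faithfulness of $\Sol_X^{\rmE}$ is what makes all of the gluing canonical, transferring enhanced ind-sheaf compatibilities into $\D$-module compatibilities, but verifying holonomicity and boundedness after each direct image step requires a careful use of the preservation properties for holonomic $\D$-modules, combined with the asymptotic analysis of Mochizuki to ensure that the local irregular data do extend globally.
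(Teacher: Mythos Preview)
This theorem is not proved in the present paper: it appears in the preliminaries (\S\ref{subsec_ana}) as a citation of \cite[Prop.\:3.25, Thm.\:3.26]{Ito20}, with no argument given here. There is therefore no proof in this paper to compare your proposal against.

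That said, your outline is essentially the strategy of \cite{Ito20}. Full faithfulness is taken from \cite{DK16}; the inclusion $\Sol_X^{\rmE}(\BDChol(\D_X))\subset \BEC_{\CC\mbox{\scriptsize -}c}(\I\CC_X)$ is obtained exactly as you describe, by stratifying along the characteristic variety, blowing up the boundaries, and invoking the Kedlaya--Mochizuki structure theorem to produce the local sectorial decompositions that feed into Definitions \ref{def-normal}--\ref{def-const}; and essential surjectivity is proved by a d\'evissage along the stratification, reconstructing a holonomic $\D$-module from the local exponential pieces via full faithfulness, descent through the ramification and modification, and gluing. The one point where your sketch is slightly loose is the ``principal obstacle'' you flag yourself: in \cite{Ito20} the reconstruction step is handled not by an ad hoc gluing of local $\D$-modules but by showing that an enhanced ind-sheaf with a modified quasi-normal form already lies in the essential image of $\Sol_X^{\rmE}$ restricted to meromorphic connections (via \cite[Thm.\:9.1.3]{DK16} and the structure theory), which makes the descent and compatibility checks cleaner than a bare sectorial gluing would be.
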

\newpage

\subsubsection{Algebraic Case}
Let $X$ be a smooth algebraic variety over $\CC$ and
denote by $X^\an$ the underlying complex manifold of $X$.
Recall that an algebraic stratification of $X$ is
a Zariski locally finite partition $\{X_\alpha\}_{\alpha\in A}$ of $X$
by locally closed subvarieties $X_\alpha$
such that for any $\alpha\in A$, $X_\alpha$ is smooth and
$\var{X}_\alpha = \bigsqcup_{\beta\in B} X_{\beta}$ for a subset $B\subset A$.
Moreover an algebraic stratification $\{X_\alpha\}_{\alpha\in A}$ of $X$ induces
a complex analytic stratification $\{X^\an_\alpha\}_{\alpha\in A}$ of $X^\an$.

\begin{definition}[{\cite[Thm.\:3.1]{Ito21}}]
We say that an enhanced ind-sheaf $K\in\ZEC(\I\CC_{X^\an})$ satisfies the condition 
%\hypertarget{AC}{$\AC$}
$\AC$
if there exists an algebraic stratification $\{X_\alpha\}_\alpha$ of $X$ such that
$$\pi^{-1}(\iota_{(\var{X}^{\blow}_\alpha)^\an}\CC_{(\var{X}^{\blow}_\alpha)^\an \setminus D^\an_\alpha})
\otimes \bfE (b^\an_\alpha)^{-1}K$$
has a modified quasi-normal form along $D^\an_\alpha$ for any $\alpha$,
Here $b_\alpha \colon \var{X}^\blow_\alpha \to X$ is a blow-up of $\var{X_\alpha}$
along $\partial X_\alpha := \var{X_\alpha}\setminus X_\alpha$,
$D_\alpha := b_\alpha^{-1}(\partial X_\alpha)$
and $D_\alpha^\an := \big(\var{X}_\alpha^\blow\big)^\an\setminus
\big(\var{X}_\alpha^\blow\setminus D_\alpha\big)^\an$.
Namely $\var{X}^\blow_\alpha$ is a smooth algebraic variety over $\CC$,
$D_\alpha$ is a normal crossing divisor of $\var{X}^\blow_\alpha$
and $b_\alpha$ is a projective map
which induces an isomorphism $\var{X}^\blow_\alpha\setminus D_\alpha\simto X_\alpha$
and satisfies $b_\alpha\big(\var{X}^\blow_\alpha\big)=\var{X_\alpha}$.
\end{definition}

We denote by $\ZEC_{\CC\mbox{\scriptsize -}c}(\I\CC_X)$ the full subcategory of $\ZEC(\I\CC_{X^\an})$
whose objects satisfy the condition 
%\hyperlink{AC}{$\AC$}
$\AC$.
Note that $\ZEC_{\CC\mbox{\scriptsize -}c}(\I\CC_X)$ is the full subcategory of
the abelian category $\ZEC_{\CC\mbox{\scriptsize -}c}(\I\CC_{X^\an})$
of $\CC$-constructible enhanced ind-sheaves on $X^\an$.
Moreover we %set
%\[\BEC_{\CC\mbox{\scriptsize -}c}(\I\CC_X) := \{K\in\BEC(\I\CC_{X^\an})\
%|\ \SH^i(K)\in\ZEC_{\CC\mbox{\scriptsize -}c}(\I\CC_X) \mbox{ for any }i\in\ZZ \}
%\subset \BEC_{\CC\mbox{\scriptsize -}c}(\I\CC_{X^\an}).\]
denote by $\BEC_{\CC\mbox{\scriptsize -}c}(\I\CC_{X})$
the full triangulated subcategory of $\BEC(\I\CC_{X})$
consisting of objects whose cohomologies are contained in $\ZEC_{\CC\mbox{\scriptsize -}c}(\I\CC_{X})$.

\begin{theorem}[{\cite[Thm.\:3.7]{Ito21}}]\label{thm-Ito21_complete}
Let $X$ be a smooth complete algebraic variety over $\CC$.
Then there exists an equivalence of triangulated categories
\[\Sol_X^{\rmE} \colon \BDChol(\D_X)^{\op}\simto \BEC_{\CC\mbox{\scriptsize -}c}(\I\CC_X),\
\M\mapsto \Sol_X^\rmE(\M) := \Sol_{X^\an}^\rmE(\M^\an).\]
\end{theorem}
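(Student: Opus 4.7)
The plan is to reduce to the analytic Riemann--Hilbert correspondence (Theorem \ref{thm-Ito20}) by means of GAGA for holonomic $\D$-modules on complete algebraic varieties. Since $X$ is complete, $X^\an$ is a compact complex manifold, and the analytification functor
\[(\cdot)^\an \colon \BDChol(\D_X) \longrightarrow \BDChol(\D_{X^\an})\]
is fully faithful, with essential image characterized by an ``algebraizability'' property of the underlying support/stratification data. Composing this with the analytic Riemann--Hilbert equivalence
\[\Sol_{X^\an}^\rmE \colon \BDChol(\D_{X^\an})^\op \simto \BEC_{\CC\mbox{\scriptsize -}c}(\I\CC_{X^\an})\]
immediately yields that $\Sol_X^\rmE = \Sol_{X^\an}^\rmE \circ (\cdot)^\an$ is fully faithful. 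The remaining task is to identify its essential image with $\BEC_{\CC\mbox{\scriptsize -}c}(\I\CC_X) \subset \BEC_{\CC\mbox{\scriptsize -}c}(\I\CC_{X^\an})$, i.e.\ the objects satisfying condition $\AC$.

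First I would verify the ``easy'' direction: if $\M \in \BDChol(\D_X)$, then $K := \Sol_X^\rmE(\M)$ satisfies $\AC$. Holonomicity of $\M$ gives an algebraic Whitney stratification $\{X_\alpha\}_{\alpha\in A}$ of $X$ adapted to its characteristic variety, and for each stratum the irregular values of $\M$ along the boundary divisor are algebraic meromorphic functions (they are determined by $\M$'s algebraic formal structure at each generic point, via Kedlaya--Mochizuki algebraization of good lattice decompositions after an algebraic ramified blow-up). Thus the complex blow-up $b_\alpha$ appearing in Definition \ref{def-const} may be taken algebraic, and one concludes from the analytic theorem that $\pi^{-1}(\iota_{(\var X_\alpha^\blow)^\an}\CC) \otimes \bfE(b_\alpha^\an)^{-1}K$ has a modified quasi-normal form along $D_\alpha^\an$ with algebraic data, giving condition $\AC$.

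For the converse (essential surjectivity), I would argue as follows. Given $K \in \BEC_{\CC\mbox{\scriptsize -}c}(\I\CC_X)$, the analytic equivalence produces a unique $\M' \in \BDChol(\D_{X^\an})$ with $\Sol_{X^\an}^\rmE(\M') \simeq K$. The algebraic stratification $\{X_\alpha\}$ witnessing $\AC$ shows that the singular support of $\M'$ is contained in an algebraic closed conic subset, and the normal-form data along each $D_\alpha$ (good set of irregular values $\{\varphi_i\}$, ramifications, modifications) is algebraic by hypothesis. Combining this with GAGA on the complete variety $X$ (applied stratum by stratum, gluing via formal algebraization at each closed point of $\var{X_\alpha} \setminus X_\alpha$) then lets one construct an algebraic holonomic $\M \in \BDChol(\D_X)$ with $\M^\an \simeq \M'$.

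The main obstacle will be executing this second step cleanly: translating the algebraic stratification condition $\AC$ on the ``downstream'' constructible side into algebraizability on the ``upstream'' $\D$-module side. Concretely, one must show that having algebraic $b_\alpha$ and algebraic irregular values $\varphi_i \in \sho_{\var X_\alpha^\blow}(\ast D_\alpha)/\sho$ forces the formal completion of $\M'$ along each boundary component to descend to an algebraic formal $\D$-module, so that Grothendieck's algebraization (applicable because $X$ is complete) yields the desired $\M$. Once this compatibility between the analytic normal form and the algebraic formal structure is established, both fully faithfulness and essential surjectivity fall out from the analytic case, giving the equivalence.
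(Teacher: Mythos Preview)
This theorem is not proved in the present paper; it appears in the preliminary Section~2 as a citation of \cite[Thm.\:3.7]{Ito21}. There is therefore no proof here to compare against, only the reference.

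That said, your overall strategy---reduce to the analytic equivalence of Theorem~\ref{thm-Ito20} via analytification---is the natural one and is almost certainly what \cite{Ito21} does. However, you are making the argument harder than it needs to be in two places. First, since $X$ is complete, GAGA for holonomic $\D$-modules gives that $(\cdot)^\an\colon\BDChol(\D_X)\to\BDChol(\D_{X^\an})$ is an \emph{equivalence}, not merely fully faithful; you do not need to characterize an essential image on the $\D$-module side. Second, and more importantly, reread the definition of condition $\AC$: it requires only that the \emph{stratification} $\{X_\alpha\}$ and the \emph{blow-ups} $b_\alpha$ be algebraic. The good sets of irregular values $\{\varphi_i\}$, the ramifications, and the modifications entering the (modified quasi-)normal form are still analytic data and are not required to be algebraic. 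So the ``main obstacle'' you identify---descending formal completions and algebraizing irregular values---is not actually present.

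With those two observations, the proof collapses: since $X^\an$ is compact and algebraic, Chow's theorem (and its consequences for morphisms) forces every complex analytic stratification of $X^\an$ and every projective blow-up to be algebraic, whence $\BEC_{\CC\mbox{\scriptsize -}c}(\I\CC_X)=\BEC_{\CC\mbox{\scriptsize -}c}(\I\CC_{X^\an})$ as full subcategories of $\BEC(\I\CC_{X^\an})$. Combining this with the GAGA equivalence and Theorem~\ref{thm-Ito20} gives the result directly. Your elaborate plan to algebraize formal structures stratum by stratum is unnecessary once you notice that $\AC$ does not constrain the irregular values.
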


Thanks to Hironaka's desingularization theorem \cite{Hiro64} (see also \cite[Thm\:4.3]{Naga62}),
we can take a smooth complete algebraic variety $\tl{X}$ such that $X\subset \tl{X}$
and $D := \tl{X}\setminus X$ is a normal crossing divisor of $\tl{X}$.
Let us consider a bordered space $X^\an_\infty = (X^\an, \tl{X}^\an)$
and the triangulated category $\BEC(\I\CC_{X^\an_\infty})$ of enhanced ind-sheaves 
on $X^\an_\infty$.
Remark that $\BEC(\I\CC_{X^\an_\infty})$ does not depend on the choice of $\tl{X}$,
see \cite[\S 2.3]{Ito21} for the details.

We shall denote by $j\colon X\hookrightarrow \tl{X}$ the open embedding,
and by $j^\an\colon X^\an\hookrightarrow \tl{X}^\an$ the correspondence morphism for analytic spaces of $j$. 
Then we obtain the morphism of bordered spaces
$j^\an\colon X^\an_\infty\to \tl{X}^\an$
given by the embedding $j^\an\colon X^\an\hookrightarrow \tl{X}^\an$.

\begin{definition}[{\cite[Def.\:3.10]{Ito21}}]\label{def-algconst}
We say that an enhanced ind-sheaf $K\in\BEC(\I\CC_{X^\an_\infty})$ is
algebraic $\CC$-constructible on $X_\infty^\an$
if $\bfE j^\an_{!!}K \in\BEC_{\CC\mbox{\scriptsize -}c}(\I\CC_{\tl{X}})$.
\end{definition}

We denote by $\BEC_{\CC\mbox{\scriptsize -}c}(\I\CC_{X_\infty})$
the full triangulated subcategory of $\BEC(\I\CC_{X^\an_\infty})$
consisting of algebraic $\CC$-constructible enhanced ind-sheaves on $X_\infty^\an$.
Note that the triangulated category $\BEC_{\CC\mbox{\scriptsize -}c}(\I\CC_{X_\infty})$ is the full triangulated subcategory
of $\BEC_{\RR\mbox{\scriptsize -}c}(\I\CC_{X^\an_\infty})$.
Furthermore we have:

\begin{theorem}[{\cite[Thm.\:3.11]{Ito21}}]\label{thm-Ito21}
There exists an equivalence of triangulated categories:
$$\Sol_{X_\infty}^{\rmE} \colon \BDChol(\D_X)^{\op}\simto \BEC_{\CC\mbox{\scriptsize -}c}(\I\CC_{X_\infty}),\
\M\mapsto \bfE j_{X_\infty^\an}^{-1}\Sol_{\tl{X}}^{\rmE, \sub}(\bfD j_\ast\M).$$
Here $\bfD j_!$ is the proper direct image functor for algebraic $\D$-modules by $j$.
\end{theorem}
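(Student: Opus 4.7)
The plan is to deduce the theorem from the complete algebraic case (Theorem \ref{thm-Ito21_complete}) by matching, under $\Sol_{\tl{X}}^{\rmE}$, the essential image of the algebraic direct image $\bfD j_\ast$ with the essential image of the fully faithful extension functor $\bfE j^\an_{!!}$. Via the commutative diagram of Theorem \ref{thm1.8} applied on $\tl{X}$ together with the equivalence $I_{\tl{X}}^\rmE$ of Theorem \ref{thm2.6}, I would first replace the subanalytic $\Sol_{\tl{X}}^{\rmE, \sub}$ by its enhanced ind-sheaf incarnation $\Sol_{\tl{X}}^{\rmE}$, so that the functor to analyze becomes
$$\Phi(\M) := \bfE j_{X_\infty^\an}^{-1}\Sol_{\tl{X}}^{\rmE}(\bfD j_\ast\M).$$

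I would then record two structural ingredients. On the $\D$-module side, $\bfD j_\ast \colon \BDChol(\D_X) \hookrightarrow \BDChol(\D_{\tl{X}})$ is fully faithful, with essential image the full subcategory of holonomic modules $\mathcal{N}$ satisfying $\mathcal{N} \simeq \mathcal{N}(\ast D)$ and with quasi-inverse the inverse image $\bfD j^\ast$. On the topological side, $\bfE j^\an_{!!}$ is fully faithful with quasi-inverse $\bfE j_{X_\infty^\an}^{-1}$, and by Definition \ref{def-algconst} the category $\BEC_{\CC\mbox{\scriptsize -}c}(\I\CC_{X_\infty})$ is identified via $\bfE j^\an_{!!}$ with the full subcategory of $K \in \BEC_{\CC\mbox{\scriptsize -}c}(\I\CC_{\tl{X}})$ satisfying $K \simeq \bfE j^\an_{!!}\bfE j_{X_\infty^\an}^{-1} K$.

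The core step, which I expect to be the principal obstacle, is the enhanced meromorphic localization formula
$$\Sol_{\tl{X}}^{\rmE}(\mathcal{N}(\ast D)) \simeq \bfE j^\an_{!!}\bfE j_{X_\infty^\an}^{-1}\Sol_{\tl{X}}^{\rmE}(\mathcal{N}) \quad \text{for every } \mathcal{N} \in \BDChol(\D_{\tl{X}}).$$
I would establish this locally along $D$ using the modified quasi-normal form description of $\Sol_{\tl{X}}^{\rmE}(\mathcal{N})$ coming from Theorem \ref{thm-Ito21_complete} and Definitions \ref{def-normal}--\ref{def-modi}, combined with the observation that each exponential block $\EE^{\Re\varphi}_{U_{x,j}|U_x}$ appearing there is supported on a sector $U_{x,j} \subset \tl{X}\setminus D = X$ and is therefore already stable under the idempotent $\bfE j^\an_{!!}\bfE j_{X_\infty^\an}^{-1}$. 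The converse, that $(\Sol_{\tl{X}}^{\rmE})^{-1}(\bfE j^\an_{!!} K)$ is meromorphic along $D$ for $K \in \BEC_{\CC\mbox{\scriptsize -}c}(\I\CC_{X_\infty})$, then follows by applying the formula to $\mathcal{N} := (\Sol_{\tl{X}}^{\rmE})^{-1}(\bfE j^\an_{!!} K)$ and invoking fully faithfulness of $\Sol_{\tl{X}}^{\rmE}$.

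Granting this matching, the conclusion is formal. For essential surjectivity, given $K \in \BEC_{\CC\mbox{\scriptsize -}c}(\I\CC_{X_\infty})$, set $\M := \bfD j^\ast (\Sol_{\tl{X}}^{\rmE})^{-1}(\bfE j^\an_{!!} K) \in \BDChol(\D_X)$; using $\bfD j_\ast \bfD j^\ast \simeq \id$ on meromorphic modules together with the matching yields $\Phi(\M) \simeq \bfE j_{X_\infty^\an}^{-1}\bfE j^\an_{!!} K \simeq K$. Full faithfulness is automatic, since $\Phi$ is a composition of the fully faithful $\bfD j_\ast$, the equivalence $\Sol_{\tl{X}}^{\rmE}$, and the functor $\bfE j_{X_\infty^\an}^{-1}$, which serves as a quasi-inverse of $\bfE j^\an_{!!}$ on the essential image to which the first two functors map $\BDChol(\D_X)$.
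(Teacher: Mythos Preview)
The paper does not contain a proof of this statement: Theorem~\ref{thm-Ito21} appears in the preliminaries (Section~\ref{sec-2}) as a quotation of \cite[Thm.\ 3.11]{Ito21}, with no argument supplied here. There is therefore nothing in the present paper to compare your proposal against.

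That said, your outline is a sound strategy and is essentially how such a result is obtained. One remark on the core step: the enhanced meromorphic localization formula
\[
\Sol_{\tl{X}}^{\rmE}(\mathcal{N}(\ast D)) \;\simeq\; \bfE j^\an_{!!}\,\bfE j_{X_\infty^\an}^{-1}\,\Sol_{\tl{X}}^{\rmE}(\mathcal{N})
\]
does not require the modified quasi-normal form machinery you invoke. Since $\bfE j^\an_{!!}\,\bfE j_{X_\infty^\an}^{-1}(-)\simeq \pi^{-1}(\iota_{\tl{X}^\an}\CC_{X^\an})\otimes(-)$ on $\BEC(\I\CC_{\tl{X}^\an})$, the formula reduces to
\[
\Sol_{\tl{X}^\an}^{\rmE}\bigl(\mathcal{N}^\an(\ast D^\an)\bigr)\;\simeq\;\pi^{-1}(\iota_{\tl{X}^\an}\CC_{X^\an})\otimes\Sol_{\tl{X}^\an}^{\rmE}(\mathcal{N}^\an),
\]
which is a direct consequence of the compatibility of $\Sol^{\rmE}$ with localization already established in \cite{DK16}. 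Your route via Definitions~\ref{def-normal}--\ref{def-modi} would also work, but it reproves a known functorial identity by hand. A minor typographical point: the displayed formula in the statement writes $\Sol_{\tl{X}}^{\rmE,\sub}$ while the target is $\BEC_{\CC\mbox{\scriptsize -}c}(\I\CC_{X_\infty})$; you correctly absorbed this via Theorems~\ref{thm1.8} and~\ref{thm2.6}.
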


\section{Main Results}\label{sec3}

\subsection{Relation between $\ZEC(\Bbbk_{M_\infty}^\sub)$ and $\ZEC(\I\Bbbk_{M_\infty})$}
In this subsection, we shall explain a relation between 
$\ZEC(\Bbbk_{M_\infty}^\sub)$ and $\ZEC(\I\Bbbk_{M_\infty})$.

Throughout this subsection,
let $\Bbbk$ be a field and $M_\infty$ a real analytic bordered spaces.
Let us set
\begin{align*}
\bfD_{\I{\RR\mbox{\scriptsize -}c}}^{\leq 0}(\I\Bbbk_{M_\infty}) &:= 
\bfD^{\leq 0}(\I\Bbbk_{M_\infty})\cap
\BDC_{\I{\RR\mbox{\scriptsize -}c}}(\I\Bbbk_{M_\infty}),\\
\bfD_{\I{\RR\mbox{\scriptsize -}c}}^{\geq 0}(\I\Bbbk_{M_\infty}) &:= 
\bfD^{\geq 0}(\I\Bbbk_{M_\infty})\cap
\BDC_{\I{\RR\mbox{\scriptsize -}c}}(\I\Bbbk_{M_\infty}).
\end{align*}

\begin{lemma}
A pair $(\bfD_{\I{\RR\mbox{\scriptsize -}c}}^{\leq 0}(\I\Bbbk_{M_\infty}),
\bfD_{\I{\RR\mbox{\scriptsize -}c}}^{\geq 0}(\I\Bbbk_{M_\infty}))$
is a t-structure on $\BDC_{\I{\RR\mbox{\scriptsize -}c}}(\I\Bbbk_{M_\infty})$.
\end{lemma}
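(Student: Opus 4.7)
The plan is to show that $\BDC_{\I{\RR\mbox{\scriptsize -}c}}(\I\Bbbk_{M_\infty})$ is stable under the standard truncation functors $\tau^{\leq 0}$ and $\tau^{\geq 0}$ on $\BDC(\I\Bbbk_{M_\infty})$; once this is established, the standard argument (for any $F$ in the subcategory, the truncation triangle $\tau^{\leq 0} F \to F \to \tau^{\geq 1} F \to$ lives entirely in the subcategory, while the orthogonality between $\bfD^{\leq 0}$ and $\bfD^{\geq 1}$ is automatically inherited from the ambient t-structure) gives that the intersection pair is a t-structure on $\BDC_{\I{\RR\mbox{\scriptsize -}c}}(\I\Bbbk_{M_\infty})$.

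First I would settle the case of the ambient good topological space. By the description recalled earlier, $\BDC_{\I{\RR\mbox{\scriptsize -}c}}(\I\Bbbk_{\che{M}})$ consists of those $G$ whose cohomologies $H^i(G)$ all lie in $\I_{\RR\mbox{\scriptsize -}c}\Bbbk_{\che{M}}$, and the latter is an abelian subcategory of $\I\Bbbk_{\che{M}}$. Since $H^i(\tau^{\leq 0} G) = H^i(G)$ for $i \leq 0$ and vanishes otherwise, and dually for $\tau^{\geq 0}$, the subcategory $\BDC_{\I{\RR\mbox{\scriptsize -}c}}(\I\Bbbk_{\che{M}})$ is stable under the standard truncation functors.

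Next I would transfer the stability to $M_\infty$. The defining formula
$$\bfD^{\leq 0}(\I\Bbbk_{M_\infty}) = \{F \mid \bfl_{M_\infty} F \in \bfD^{\leq 0}(\I\Bbbk_{\che{M}})\},$$
together with its counterpart for $\bfD^{\geq 0}$, says precisely that the fully faithful left adjoint $\bfl_{M_\infty} \simeq \bfR j_{M_\infty!!}$ both preserves and reflects each half of the t-structure; in particular $\bfl_{M_\infty}$ is t-exact and hence commutes with truncation up to canonical isomorphism (apply $\bfl_{M_\infty}$ to the truncation triangle of $F$ and invoke the uniqueness of truncation triangles). Consequently, for $F \in \BDC_{\I{\RR\mbox{\scriptsize -}c}}(\I\Bbbk_{M_\infty})$, i.e.\ $\bfl_{M_\infty} F \in \BDC_{\I{\RR\mbox{\scriptsize -}c}}(\I\Bbbk_{\che{M}})$, one has
$$\bfl_{M_\infty}(\tau^{\leq 0} F) \simeq \tau^{\leq 0}(\bfl_{M_\infty} F) \in \BDC_{\I{\RR\mbox{\scriptsize -}c}}(\I\Bbbk_{\che{M}}),$$
so that $\tau^{\leq 0} F \in \BDC_{\I{\RR\mbox{\scriptsize -}c}}(\I\Bbbk_{M_\infty})$ by the characterization via $\bfl_{M_\infty}$, and analogously for $\tau^{\geq 0}$.

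I do not foresee any serious obstacle: the argument reduces to the t-exactness of $\bfl_{M_\infty}$, which is built into the very definition of the standard t-structure on the quotient $\BDC(\I\Bbbk_{M_\infty})$. As a sanity check one could instead transport the canonical t-structure on $\BDC(\Bbbk_{M_\infty}^\sub)$, which is the derived category of the abelian category $\Mod(\Bbbk_{M_\infty}^\sub)$, across the equivalence $I_{M_\infty}$ of Theorem \ref{thm2.2}; verifying that the transported t-structure matches the intersection pair above reduces to the same t-exactness observation, this time applied to $I_{\che{M}}\bfR j_{M_\infty!!}$.
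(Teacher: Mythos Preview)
Your proof is correct and follows essentially the same approach as the paper: both reduce to the t-structure on $\che{M}$ and transfer via the $\bfl_{M_\infty}\simeq\bfR j_{M_\infty!!}$ / $j_{M_\infty}^{-1}$ adjunction. The only difference is presentational: the paper explicitly pulls back the truncation triangle from $\che{M}$ and verifies the pieces land in $\BDC_{\I{\RR\mbox{\scriptsize -}c}}(\I\Bbbk_{M_\infty})$ using the formula $\bfR j_{M_\infty!!}\,j_{M_\infty}^{-1}(-)\simeq\iota_{\che{M}}\Bbbk_M\otimes(-)$, while you phrase the same computation abstractly as t-exactness of $\bfl_{M_\infty}$ and stability of the subcategory under the ambient truncation.
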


\begin{proof}
It is enough to show that for any $F\in \BDC_{\I{\RR\mbox{\scriptsize -}c}}(\I\Bbbk_{M_\infty})$
there exists a distinguished triangle in $\BDC_{\I{\RR\mbox{\scriptsize -}c}}(\I\Bbbk_{M_\infty})$
$$F_1 \to F \to F_2 \xrightarrow{+1}$$
with $F_1\in \bfD^{\leq 0}_{\I{\RR\mbox{\scriptsize -}c}}(\I\Bbbk_{M_\infty}),
F_2\in \bfD^{\geq 1}_{\I{\RR\mbox{\scriptsize -}c}}(\I\Bbbk_{M_\infty})$.
Let $F\in \BDC_{\I{\RR\mbox{\scriptsize -}c}}(\I\Bbbk_{M_\infty})$.
Since $\bfR j_{M_\infty!!}F\in \BDC_{\I{\RR\mbox{\scriptsize -}c}}(\I\Bbbk_{\che{M}})$,
there exists 
a distinguished triangle in $\BDC_{\I{\RR\mbox{\scriptsize -}c}}(\I\Bbbk_{\che{M}})$.
$$F_1' \to \bfR j_{M_\infty!!}F \to F_2' \xrightarrow{+1}$$
with $F_1'\in \bfD^{\leq 0}_{\I{\RR\mbox{\scriptsize -}c}}(\Bbbk_{\che{M}}^\sub),
F_2'\in \bfD^{\geq 1}_{\I{\RR\mbox{\scriptsize -}c}}(\Bbbk_{\che{M}}^\sub)$
and hence we have a distinguished triangle in $\BDC(\I\Bbbk_{M_\infty})$
$$j_{M_\infty}^{-1}F_1' \to F \to j_{M_\infty}^{-1}F_2' \xrightarrow{+1}.$$

Let us set $F_1 := j_{M_\infty}^{-1}F_1'$ and $F_2 := j_{M_\infty}^{-1}F_2'$.
Then we have isomorphisms in $\BDC(\I\Bbbk_{\che{M}})$
$$\bfR j_{M_\infty!!}F_1 \simeq \iota_{\che{M}}\Bbbk_M\otimes F'_1,\
\bfR j_{M_\infty!!}F_2 \simeq \iota_{\che{M}}\Bbbk_M\otimes F'_2$$
by \cite[Lem.\:3.3.7 (ii)]{DK16}
and hence we have
$$\bfR j_{M_\infty!!}F_1\in \bfD^{\leq 0}_{\I{\RR\mbox{\scriptsize -}c}}(\I\Bbbk_{M_\infty}),\ 
\bfR j_{M_\infty!!}F_2\in \bfD^{\geq 1}_{\I{\RR\mbox{\scriptsize -}c}}(\I\Bbbk_{M_\infty})$$
by \cite[Prop.\:3.4.3 (i)]{DK16} and \cite[Lem.\:3.5 (1), (3)]{Ito21b}.
Therefore we have a distinguished triangle in $\BDC_{\I{\RR\mbox{\scriptsize -}c}}(\I\Bbbk_{M_\infty})$
$$F_1 \to F \to F_2 \xrightarrow{+1}$$
with $F_1\in \bfD^{\leq 0}_{\I{\RR\mbox{\scriptsize -}c}}(\Bbbk_{M_\infty}^\sub),
F_2\in \bfD^{\geq 1}_{\I{\RR\mbox{\scriptsize -}c}}(\Bbbk_{M_\infty}^\sub)$.
\end{proof}

We shall set
$\I_{\RR\mbox{\scriptsize -}c}(\I\Bbbk_{M_\infty}) :=
\bfD_{\I{\RR\mbox{\scriptsize -}c}}^{\leq 0}(\I\Bbbk_{M_\infty})\cap
\bfD_{\I{\RR\mbox{\scriptsize -}c}}^{\geq 0}(\I\Bbbk_{M_\infty}).$

\begin{lemma}\label{lem3.2}
The functor $I_{M_\infty}\colon \BDC(\Bbbk_{M_\infty}^\sub)\to\BDC(\I\Bbbk_{M_\infty})$
is t-exact with respect to the standard t-structures.
In particular, the functors $I_{M_\infty}, \lambda_{M_\infty}$ induce an equivalence of abelian categories:
\[\xymatrix@M=7pt@C=45pt{
\Mod(\Bbbk_{M_\infty}^\sub)\ar@<0.8ex>@{->}[r]^-{I_{M_\infty}}_-\sim
&
\I_{\RR\mbox{\scriptsize -}c}(\I\Bbbk_{M_\infty})
\ar@<0.8ex>@{->}[l]^-{\lambda_{M_\infty}}.
}\]
\end{lemma}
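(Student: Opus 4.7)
The plan is to deduce both conclusions from the factorization
\[I_{M_\infty}\SF \simeq \q_{M_\infty}\,I_{\che{M}}\,\bfR j_{M_\infty!!}\SF\]
by checking t-exactness of each factor separately, and then combining this with the triangulated equivalence of Theorem \ref{thm2.2}. Since the relevant t-structures on both sides are standard, any t-exact triangulated equivalence will automatically restrict to an equivalence of hearts.

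First I would verify t-exactness of the three constituent functors. The functor $\bfR j_{M_\infty!!}\colon \BDC(\Bbbk_{M_\infty}^\sub)\to\BDC(\Bbbk_{\che{M}}^\sub)$ is t-exact since $j_{M_\infty}$ is the bordered morphism attached to the open inclusion $M\hookrightarrow\che{M}$, so that it is already exact at the level of abelian categories (extension-by-zero on the subanalytic site). The functor $I_{\che{M}}$ is t-exact by Theorem \ref{thm2.26}, which provides an equivalence of abelian categories $\Mod(\Bbbk_{\che{M}}^\sub)\simto\I_{\RR\mbox{\scriptsize -}c}\Bbbk_{\che{M}}$. For the quotient functor $\q_{M_\infty}$, the key input is the identity $\bfl_{M_\infty}\q_{M_\infty}F\simeq\iota_{\che{M}}\Bbbk_M\otimes F$ (invoked in the proof of the preceding lemma via \cite[Lem.\:3.3.7\,(ii)]{DK16}); since the standard t-structure on $\BDC(\I\Bbbk_{M_\infty})$ is defined so that $\bfl_{M_\infty}$ is t-exact, and tensoring with the flat, concentrated-in-degree-zero ind-sheaf $\iota_{\che{M}}\Bbbk_M$ preserves both $\bfD^{\leq 0}(\I\Bbbk_{\che{M}})$ and $\bfD^{\geq 0}(\I\Bbbk_{\che{M}})$, the functor $\q_{M_\infty}$ is t-exact as well. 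Composing, $I_{M_\infty}$ is t-exact.

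For the second assertion, Theorem \ref{thm2.2} supplies the triangulated equivalence $I_{M_\infty}\colon\BDC(\Bbbk_{M_\infty}^\sub)\simto\BDC_{\I\RR\mbox{\scriptsize -}c}(\I\Bbbk_{M_\infty})$ with quasi-inverse $\lambda_{M_\infty}$, and the previous lemma equips the target with the induced t-structure having heart $\I_{\RR\mbox{\scriptsize -}c}(\I\Bbbk_{M_\infty})$. By the t-exactness just proved, $I_{M_\infty}$ maps $\Mod(\Bbbk_{M_\infty}^\sub)$ into $\I_{\RR\mbox{\scriptsize -}c}(\I\Bbbk_{M_\infty})$. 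The symmetric direction, i.e.\ that $\lambda_{M_\infty}$ sends the heart back to the heart, follows from the standard fact that a triangulated equivalence which is t-exact on one side is automatically t-exact on the other: for $F\in\bfD^{\leq 0}_{\I\RR\mbox{\scriptsize -}c}(\I\Bbbk_{M_\infty})$, apply the t-exact $I_{M_\infty}$ to the truncation triangle of $\lambda_{M_\infty}F$ and invoke the equivalence to conclude $\tau^{\geq 1}\lambda_{M_\infty}F=0$, and symmetrically for $\bfD^{\geq 0}$. Restricting the triangulated equivalence to hearts then yields the displayed equivalence of abelian categories.

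The principal technical point is the t-exactness of $\q_{M_\infty}$: the $\bfD^{\leq 0}$ side is immediate from the description $\bfl_{M_\infty}\q_{M_\infty}(\cdot)\simeq\iota_{\che{M}}\Bbbk_M\otimes(\cdot)$, whereas the $\bfD^{\geq 0}$ side really requires the flatness of $\iota_{\che{M}}\Bbbk_M$ as an ind-sheaf, so that the tensor product is exact in both variables. This flatness is a well-known but non-trivial property of ind-sheaves of constant sheaves on good topological spaces; once it is in hand, every other step is a routine assembly.
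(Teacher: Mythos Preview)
Your argument is correct and essentially coincides with the paper's. The paper does not phrase it as ``three t-exact factors'' but instead computes directly
\[
\bfl_{M_\infty} I_{M_\infty}(\SF)\;=\;\bfR j_{M_\infty!!}I_{M_\infty}(\SF)\;\simeq\;\iota_{\che{M}}\Bbbk_M\otimes I_{\che{M}}\bfR j_{M_\infty!!}\SF
\]
and then reads off membership in $\bfD^{\leq 0}$ and $\bfD^{\geq 1}$ from this formula; the content is the same as your three-step factorization, and the crucial technical input---that $\iota_{\che{M}}\Bbbk_M\otimes(\cdot)$ preserves both halves of the t-structure---is handled in the paper by citation to \cite[Prop.\:3.4.3\,(i)]{DK16} and \cite[Lem.\:3.5]{Ito21b} rather than by naming it ``flatness'' explicitly. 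For the passage from t-exactness to the equivalence of hearts, the paper simply invokes \cite[Prop.\:10.1.6]{KS90}, which packages exactly the argument you wrote out by hand for $\lambda_{M_\infty}$.
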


\begin{proof}
By Theorem \ref{thm2.2} (2) and \cite[Prop.\:10.1.6]{KS90},
it is enough to show that the functor $I_{M_\infty}$
is t-exact with respect to the standard t-structures.

Recall that for any $\SF\in \BDC(\Bbbk_{M_\infty}^\sub)$
we have isomorphisms in $\BDC(\I\Bbbk_{\che{M}})$
$$\bfR j_{M_\infty!!}I_{M_\infty}(\SF) \simeq
\bfR j_{M_\infty!!}(\q_{M_\infty}I_{\che{M}}\bfR j_{M_\infty!!}\SF)\\
\simeq
\iota_{\che{M}}\Bbbk_M\otimes I_{\che{M}}\bfR j_{M_\infty!!}\SF.$$
%and hence $\bfR j_{M_\infty!!}I_{M_\infty}(\SF) \in \BDC_{\I{\RR\mbox{\scriptsize -}c}}(\I\Bbbk_{\che{M}})$.
Moreover, since the functor $\bfR j_{M_\infty!!}$ is left adjoint functor of
t-exact functor $j_{M_\infty}^{-1}$ with respect to the standard t-structures,
it is right t-exact with respect to them 
by \cite[Cor.\:10.1.18]{KS90}
and hence it is t-exact with respect to them.

Let $\SF\in \bfD^{\leq 0}(\Bbbk_{M_\infty}^\sub)$.
Then we have $\bfR j_{M_\infty!!}\SF\in \bfD^{\leq 0}(\Bbbk_{\che{M}}^\sub)$
and hence we have $\bfR j_{M_\infty!!}I_{M_\infty}(\SF)\in \bfD^{\leq 0}(\I\Bbbk_{\che{M}})$.
This implies that $I_{M_\infty}(\SF)\in \bfD^{\leq 0}(\I\Bbbk_{M_\infty})$
and hence we have $I_{M_\infty}(\SF)\in \bfD_{\I{\RR\mbox{\scriptsize -}c}}^{\leq 0}(\I\Bbbk_{M_\infty})$.

Let $\SF\in \bfD^{\geq 1}(\Bbbk_{M_\infty}^\sub)$.
Then we have $\bfR j_{M_\infty!!}\SF\in \bfD^{\geq 1}(\Bbbk_{\che{M}}^\sub)$
and hence we have $\bfR j_{M_\infty!!}I_{M_\infty}(\SF)\in \bfD^{\geq 1}(\I\Bbbk_{\che{M}})$.
This implies that $I_{M_\infty}(\SF)\in \bfD^{\geq 1}(\I\Bbbk_{M_\infty})$
and hence we have $I_{M_\infty}(\SF)\in \bfD_{\I{\RR\mbox{\scriptsize -}c}}^{\geq 1}(\I\Bbbk_{M_\infty})$. 
\end{proof}

Let us set
\begin{align*}
\bfE_{\I{\RR\mbox{\scriptsize -}c}}^{\leq 0}(\I\Bbbk_{M_\infty}) &:= 
\bfE^{\leq 0}(\I\Bbbk_{M_\infty})\cap
\BEC_{\I{\RR\mbox{\scriptsize -}c}}(\I\Bbbk_{M_\infty}),\\
\bfE_{\I{\RR\mbox{\scriptsize -}c}}^{\geq 0}(\I\Bbbk_{M_\infty}) &:= 
\bfE^{\geq 0}(\I\Bbbk_{M_\infty})\cap
\BEC_{\I{\RR\mbox{\scriptsize -}c}}(\I\Bbbk_{M_\infty}).
\end{align*}

\begin{lemma}
A pair $(\bfE_{\I{\RR\mbox{\scriptsize -}c}}^{\leq 0}(\I\Bbbk_{M_\infty}),
\bfE_{\I{\RR\mbox{\scriptsize -}c}}^{\geq 0}(\I\Bbbk_{M_\infty}))$
is a t-structure on $\BEC_{\I{\RR\mbox{\scriptsize -}c}}(\I\Bbbk_{M_\infty})$.
\end{lemma}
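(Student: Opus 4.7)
The plan is to mimic the structure of the preceding lemma, but one level up: transfer the problem from the enhanced quotient $\BEC_{\I\RR\mbox{\scriptsize -}c}(\I\Bbbk_{M_\infty})$ to the derived category $\BDC_{\I\RR\mbox{\scriptsize -}c}(\I\Bbbk_{M_\infty\times\RR_\infty})$, where the t-structure has already been established, and then push the truncation triangle back through the quotient functor $\Q_{M_\infty}$.

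The two routine axioms are essentially automatic. Orthogonality $\Hom_{\BEC_{\I\RR\mbox{\scriptsize -}c}}(K_1,K_2)=0$ for $K_1\in\bfE^{\leq 0}_{\I\RR\mbox{\scriptsize -}c}$ and $K_2\in\bfE^{\geq 1}_{\I\RR\mbox{\scriptsize -}c}$ follows because $\BEC_{\I\RR\mbox{\scriptsize -}c}(\I\Bbbk_{M_\infty})$ is a full triangulated subcategory of $\BEC(\I\Bbbk_{M_\infty})$ and the Hom already vanishes there by the ambient standard t-structure. Closure under the shifts $[\pm1]$ is immediate because both of our defining classes are intersections of the ambient t-structure pieces with the triangulated subcategory $\BEC_{\I\RR\mbox{\scriptsize -}c}$, and each intersection is visibly stable under the corresponding shift.

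The real content is the existence of truncation triangles. Given $K\in\BEC_{\I\RR\mbox{\scriptsize -}c}(\I\Bbbk_{M_\infty})$, the definition of this quotient gives a representative $\tl K\in\BDC_{\I\RR\mbox{\scriptsize -}c}(\I\Bbbk_{M_\infty\times\RR_\infty})$ with $\Q_{M_\infty}\tl K\simeq K$. Applying the previous lemma (the t-structure on $\BDC_{\I\RR\mbox{\scriptsize -}c}(\I\Bbbk_{M_\infty\times\RR_\infty})$), we obtain a distinguished triangle
\[
\tl K_1\to \tl K\to \tl K_2\xrightarrow{+1}
\]
in $\BDC_{\I\RR\mbox{\scriptsize -}c}(\I\Bbbk_{M_\infty\times\RR_\infty})$ with $\tl K_1\in \bfD^{\leq 0}_{\I\RR\mbox{\scriptsize -}c}$ and $\tl K_2\in \bfD^{\geq 1}_{\I\RR\mbox{\scriptsize -}c}$. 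Applying the triangulated functor $\Q_{M_\infty}$ produces a distinguished triangle
\[
K_1\to K\to K_2\xrightarrow{+1}
\]
whose vertices $K_i:=\Q_{M_\infty}\tl K_i$ all lie in $\BEC_{\I\RR\mbox{\scriptsize -}c}(\I\Bbbk_{M_\infty})$ by the very definition of the latter as a quotient.

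The step I expect to be the main technical point is verifying that these $K_i$ land in the prescribed t-structure pieces of $\BEC(\I\Bbbk_{M_\infty})$. Concretely, one needs $K_1\in\bfE^{\leq 0}$ and $K_2\in\bfE^{\geq 1}$, i.e., $\bfL^\rmE_{M_\infty}K_i$ is in the expected range in $\bfD(\I\Bbbk_{M_\infty\times\RR_\infty})$. This amounts to showing that $\Q_{M_\infty}$ is t-exact for the standard t-structures, which in turn rests on the fact that the Serre subcategory $\pi^{-1}\BDC(\I\Bbbk_{M_\infty})$ used to define the quotient is stable under the truncations $\tau^{\leq 0},\tau^{\geq 0}$. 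This stability holds because $\pi^{-1}$ is t-exact (as $\pi$ is a smooth morphism of bordered spaces), so truncating an object of the form $\pi^{-1}F$ yields $\pi^{-1}\tau^{\leq 0}F$ and $\pi^{-1}\tau^{\geq 0}F$, which remain in $\pi^{-1}\BDC(\I\Bbbk_{M_\infty})$. From this it follows that $\tl K_i\in\bfD^{\leq 0}$ (resp.\ $\bfD^{\geq 1}$) forces $K_i=\Q_{M_\infty}\tl K_i\in\bfE^{\leq 0}$ (resp.\ $\bfE^{\geq 1}$), completing the verification and establishing the third t-structure axiom.
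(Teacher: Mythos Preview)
Your proof is correct, but it takes a longer detour than the paper's argument. The paper does not choose an arbitrary representative $\tl K$; instead it uses the canonical lift $\bfL_{M_\infty}^\rmE K$. The single observation it makes is that $\bfL_{M_\infty}^\rmE K\in\BDC_{\I\RR\mbox{\scriptsize -}c}(\I\Bbbk_{M_\infty\times\RR_\infty})$ for any $K\in\BEC_{\I\RR\mbox{\scriptsize -}c}(\I\Bbbk_{M_\infty})$, which follows immediately from the definition of $\BEC_{\I\RR\mbox{\scriptsize -}c}$ as a quotient of $\BDC_{\I\RR\mbox{\scriptsize -}c}$. Since the ambient t-structure on $\BEC(\I\Bbbk_{M_\infty})$ is \emph{defined} via $\bfL_{M_\infty}^\rmE$, the ambient truncations satisfy $\bfL_{M_\infty}^\rmE(\tau^{\leq 0}K)\simeq\tau^{\leq 0}(\bfL_{M_\infty}^\rmE K)$, and the right-hand side stays in $\BDC_{\I\RR\mbox{\scriptsize -}c}$ by the previous lemma; applying $\Q_{M_\infty}$ then shows the truncations of $K$ remain in $\BEC_{\I\RR\mbox{\scriptsize -}c}$. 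This bypasses entirely the need to argue t-exactness of $\Q_{M_\infty}$.

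One small point to be aware of in your route: your justification that $\Q_{M_\infty}$ is t-exact (via stability of $\pi^{-1}\BDC(\I\Bbbk_{M_\infty})$ under truncation) establishes t-exactness for the \emph{induced quotient} t-structure, whereas what you need is t-exactness for the t-structure defined through $\bfL_{M_\infty}^\rmE$. These do coincide, but that coincidence is itself a fact from the literature (essentially that $\bfL_{M_\infty}^\rmE\Q_{M_\infty}\simeq(\Bbbk_{\{t\geq0\}}\oplus\Bbbk_{\{t\leq0\}})\Potimes(\cdot)$ is t-exact). The paper's approach avoids invoking this by working with $\bfL_{M_\infty}^\rmE K$ from the outset.
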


\begin{proof}
By the definition of the t-structure $(\bfE^{\leq 0}(\I\Bbbk_{M_\infty}), \bfE^{\geq 0}(\I\Bbbk_{M_\infty}))$
on $\BEC(\I\Bbbk_{M_\infty})$,
it is enough to show that 
$\bfL_{M_\infty}^\rmE K\in \BDC_{\I{\RR\mbox{\scriptsize -}c}}(\I\Bbbk_{M_\infty\times \mathbb{R}_\infty})$
for any $K\in \BEC_{\I{\RR\mbox{\scriptsize -}c}}(\I\Bbbk_{M_\infty})$.
This follows from the definition of $\BEC_{\I{\RR\mbox{\scriptsize -}c}}(\I\Bbbk_{M_\infty})$.
See \S\ref{2.11} for the details.
\end{proof}

We shall set
$\ZEC_{\I{\RR\mbox{\scriptsize -}c}}(\I\Bbbk_{M_\infty}) :=
\bfE_{\I{\RR\mbox{\scriptsize -}c}}^{\leq 0}(\I\Bbbk_{M_\infty})\cap
\bfE_{\I{\RR\mbox{\scriptsize -}c}}^{\geq 0}(\I\Bbbk_{M_\infty}).$

\begin{proposition}\label{prop3.4}
The functor $I_{M_\infty}^\rmE\colon \BEC(\Bbbk_{M_\infty}^\sub)\to\BEC(\I\Bbbk_{M_\infty})$
is t-exact with respect to the standard t-structures.
In particular, the functors $I_{M_\infty}^\rmE, \lambda_{M_\infty}^\rmE$ induce an equivalence of abelian categories:
\[\xymatrix@M=7pt@C=45pt{
\ZEC(\Bbbk_{M_\infty}^\sub)\ar@<0.8ex>@{->}[r]^-{I_{M_\infty}^\rmE}_-\sim
&
\ZEC_{\I{\RR\mbox{\scriptsize -}c}}(\I\Bbbk_{M_\infty})
\ar@<0.8ex>@{->}[l]^-{\lambda_{M_\infty}^\rmE}.
}\]
\end{proposition}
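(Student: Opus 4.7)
My plan is to reduce the statement to Lemma \ref{lem3.2} (applied to the real analytic bordered space $M_\infty \times \RR_\infty$) by first establishing the intertwining
\[\bfL_{M_\infty}^\rmE \circ I_{M_\infty}^\rmE \simeq I_{M_\infty\times\RR_\infty} \circ \bfL_{M_\infty}^{\rmE, \sub}\]
between the two quotient presentations of the enhanced categories. Once this is in hand, the equivalence of hearts will follow formally from Theorem \ref{thm2.6} (2) combined with \cite[Prop.\:10.1.6]{KS90}, in exactly the same style as the non-enhanced Lemma \ref{lem3.2}.

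To obtain the intertwining I would pass to right adjoints. By Theorem \ref{thm2.6} (1) and the description of $\bfL_{M_\infty}^\rmE$ recalled in Subsection \ref{subsec2.7}, the composition $\bfL_{M_\infty}^\rmE \circ I_{M_\infty}^\rmE$ is left adjoint to $J_{M_\infty}^\rmE \circ \Q_{M_\infty}$. Likewise, by Theorem \ref{thm2.2} and Subsection \ref{subsec2.10}, the composition $I_{M_\infty\times\RR_\infty} \circ \bfL_{M_\infty}^{\rmE, \sub}$ is left adjoint to $\Q_{M_\infty}^\sub \circ \bfR J_{M_\infty\times\RR_\infty}$. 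But the very definition of $J_{M_\infty}^\rmE$ recalled in Subsection \ref{2.11} is $J_{M_\infty}^\rmE(\Q_{M_\infty} F) = \Q_{M_\infty}^\sub \bfR J_{M_\infty\times\RR_\infty} F$, i.e.\ $J_{M_\infty}^\rmE \circ \Q_{M_\infty} = \Q_{M_\infty}^\sub \circ \bfR J_{M_\infty\times\RR_\infty}$ as functors on $\BDC(\I\Bbbk_{M_\infty\times\RR_\infty})$. Hence the two right adjoints coincide, and so do their left adjoints by uniqueness of adjoints.

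Granting this compatibility, t-exactness is immediate. By the definition of the standard t-structure on $\BEC(\Bbbk_{M_\infty}^\sub)$ (resp.\ on $\BEC(\I\Bbbk_{M_\infty})$), the condition $K \in \bfE^{\leq 0}(\Bbbk_{M_\infty}^\sub)$ is equivalent to $\bfL_{M_\infty}^{\rmE, \sub} K \in \bfD^{\leq 0}(\Bbbk_{M_\infty\times\RR_\infty}^\sub)$ (resp.\ $\bfL_{M_\infty}^\rmE K \in \bfD^{\leq 0}(\I\Bbbk_{M_\infty\times\RR_\infty})$), and analogously for $\bfE^{\geq 0}$. Using the intertwining together with the t-exactness of $I_{M_\infty\times\RR_\infty}$ supplied by Lemma \ref{lem3.2}, one reads off at once that $I_{M_\infty}^\rmE$ sends $\bfE^{\leq 0}$ into $\bfE^{\leq 0}$ and $\bfE^{\geq 0}$ into $\bfE^{\geq 0}$. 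The main obstacle is therefore the intertwining relation; once it is established, the restriction to hearts and the fact that $\lambda_{M_\infty}^\rmE$ is its quasi-inverse there are formal consequences of Theorem \ref{thm2.6} (2). The only subtle point to check in passing is that the identity $J_{M_\infty}^\rmE \circ \Q_{M_\infty} = \Q_{M_\infty}^\sub \circ \bfR J_{M_\infty\times\RR_\infty}$ is indeed valid on all of $\BDC(\I\Bbbk_{M_\infty\times\RR_\infty})$, and not merely on the essential image of $\bfL_{M_\infty}^\rmE$, which is precisely how $J_{M_\infty}^\rmE$ was set up in \cite{Ito21b}.
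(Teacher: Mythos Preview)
Your argument is correct and reaches the same key intermediate relation as the paper, namely the intertwining $\bfL_{M_\infty}^\rmE \circ I_{M_\infty}^\rmE \simeq I_{M_\infty\times\RR_\infty} \circ \bfL_{M_\infty}^{\rmE, \sub}$, after which both proofs conclude identically via Lemma \ref{lem3.2} and \cite[Prop.\:10.1.6]{KS90}. The difference lies entirely in how this intertwining is obtained. The paper computes it directly: it expands $\bfL_{M_\infty}^\rmE I_{M_\infty}^\rmE(K)$ using the explicit convolution formula $\bfL_{M_\infty}^\rmE \simeq \iota_{M_\infty\times\RR_\infty}(\Bbbk_{\{t\leq0\}}\oplus\Bbbk_{\{t\geq0\}})\Potimes(\cdot)$ and then pushes the $I$-functor across the convolution by \cite[Props.\:3.7 (4)(i), 3.12 (1)]{Ito21b}, ending with $I_{M_\infty\times\RR_\infty}\bfL_{M_\infty}^{\rmE, \sub}(K)$. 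Your route is more abstract: you observe that both composites are left adjoints, identify their right adjoints as $J_{M_\infty}^\rmE\circ\Q_{M_\infty}$ and $\Q_{M_\infty}^\sub\circ\bfR J_{M_\infty\times\RR_\infty}$ respectively, and note these coincide by the very definition of $J_{M_\infty}^\rmE$. Your approach avoids invoking the explicit convolution description of $\bfL^\rmE$ and the monoidal compatibility of $I$, at the cost of relying on the well-definedness of $J_{M_\infty}^\rmE$ as a descent through the quotient; the ``subtle point'' you flag is in fact not an issue, since the definition of $J_{M_\infty}^\rmE$ in \cite{Ito21b} is precisely the statement that $\Q_{M_\infty}^\sub\circ\bfR J_{M_\infty\times\RR_\infty}$ factors through $\Q_{M_\infty}$, giving the equality on all of $\BDC(\I\Bbbk_{M_\infty\times\RR_\infty})$.
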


\begin{proof}
By Theorem \ref{thm2.6} (2) and \cite[Prop.\:10.1.6]{KS90},
it is enough to show that the functor $I_{M_\infty}^\rmE$
is t-exact with respect to the standard t-structures.

Remark that for any $K\in \BEC(\Bbbk_{M_\infty}^\sub)$
we have isomorphisms in $\BDC(\I\Bbbk_{M_\infty\times \RR_\infty})$
\begin{align*}
\bfL_{M_\infty}^\rmE I_{M_\infty}^\rmE(K) &\simeq
\bfL_{M_\infty}^\rmE(\Q_{M_\infty}I_{M_\infty\times \RR_\infty}\bfL_{M_\infty}^{\rmE, \sub}K)\\
&\simeq
\iota_{M_\infty\times\RR_\infty}(\Bbbk_{\{t\leq0\}}\oplus\Bbbk_{\{t\geq0\}})\Potimes
I_{M_\infty\times \RR_\infty}\bfL_{M_\infty}^{\rmE, \sub}K\\
&\simeq
I_{M_\infty\times\RR_\infty}(\rho_{M_\infty\times\RR_\infty\ast}(\Bbbk_{\{t\leq0\}}\oplus\Bbbk_{\{t\geq0\}})
\Potimes \bfL_{M_\infty}^{\rmE, \sub}K)\\
&\simeq
I_{M_\infty\times\RR_\infty}\bfL_{M_\infty}^{\rmE, \sub}(K)
\end{align*}
by \cite[Props.\:3.7 (4)(i), 3.12 (1)]{Ito21b}.

Let $K\in \bfE^{\leq 0}(\Bbbk_{M_\infty}^\sub)$.
Then we have $\bfL_{M_\infty}^{\rmE, \sub}K\in \bfD^{\leq0}(\Bbbk_{M_\infty\times \RR_\infty}^\sub)$
and hence we have 
$I_{M_\infty\times\RR_\infty}\bfL_{M_\infty}^{\rmE, \sub}(K)\in
\bfD_{\I{\RR\mbox{\scriptsize -}c}}^{\leq 0}(\I\Bbbk_{M_\infty\times\RR_\infty})$
by Lemma \ref{lem3.2}.
Therefore, we have $\bfL_{M_\infty}^\rmE I_{M_\infty}^\rmE(K)\in
\bfD_{\I{\RR\mbox{\scriptsize -}c}}^{\leq 0}(\I\Bbbk_{M_\infty\times\RR_\infty})$.
This implies that $I_{M_\infty}^\rmE(K)\in
\bfE_{\I{\RR\mbox{\scriptsize -}c}}^{\leq 0}(\I\Bbbk_{M_\infty})$.

By the same argument, for any  $K\in \bfE^{\geq 0}(\Bbbk_{M_\infty}^\sub)$
we have $I_{M_\infty}^\rmE(K)\in
\bfE_{\I{\RR\mbox{\scriptsize -}c}}^{\geq 0}(\I\Bbbk_{M_\infty})$.
%Let $K\in \bfE^{\geq 0}(\Bbbk_{M_\infty}^\sub)$.
%Then we have $\bfL_{M_\infty}^{\rmE, \sub}K\in \bfD^{\geq0}(\Bbbk_{M_\infty\times \RR_\infty}^\sub)$
%and hence we have 
%$I_{M_\infty\times\RR_\infty}\bfL_{M_\infty}^{\rmE, \sub}(K)\in
%\bfD_{\I{\RR\mbox{\scriptsize -}c}}^{\geq 0}(\I\Bbbk_{M_\infty})$
%by Lemma \ref{lem3.2}.
%Therefore, we have $\bfL_{M_\infty}^\rmE I_{M_\infty}^\rmE(K)\in
%\bfD_{\I{\RR\mbox{\scriptsize -}c}}^{\geq 0}(\I\Bbbk_{M_\infty})$.
%This implies that $I_{M_\infty}^\rmE(K)\in
%\bfE_{\I{\RR\mbox{\scriptsize -}c}}^{\geq 0}(\I\Bbbk_{M_\infty})$.
\end{proof}

Let us set
\begin{align*}
\bfE_{\RR\mbox{\scriptsize -}c}^{\leq 0}(\Bbbk_{M_\infty}^\sub) &:= 
\bfE^{\leq 0}(\Bbbk_{M_\infty}^\sub)\cap
\BEC_{\RR\mbox{\scriptsize -}c}(\I\Bbbk_{M_\infty}),\\
\bfE_{\RR\mbox{\scriptsize -}c}^{\geq 0}(\Bbbk_{M_\infty}^\sub) &:= 
\bfE^{\geq 0}(\Bbbk_{M_\infty}^\sub)\cap
\BEC_{\RR\mbox{\scriptsize -}c}(\I\Bbbk_{M_\infty}).
\end{align*}

\begin{lemma}
A pair $(\bfE_{\RR\mbox{\scriptsize -}c}^{\leq 0}(\Bbbk_{M_\infty}^\sub),
\bfE_{\RR\mbox{\scriptsize -}c}^{\geq 0}(\Bbbk_{M_\infty}^\sub))$
is a t-structure on $\BEC_{\RR\mbox{\scriptsize -}c}(\Bbbk_{M_\infty}^\sub)$.
\end{lemma}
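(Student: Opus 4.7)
The plan is to transport the standard t-structure $(\bfE_{\RR\mbox{\scriptsize -}c}^{\leq 0}(\I\Bbbk_{M_\infty}), \bfE_{\RR\mbox{\scriptsize -}c}^{\geq 0}(\I\Bbbk_{M_\infty}))$ on $\BEC_{\RR\mbox{\scriptsize -}c}(\I\Bbbk_{M_\infty})$ (recalled in \S\ref{subsec2.7}) back to $\BEC_{\RR\mbox{\scriptsize -}c}(\Bbbk_{M_\infty}^\sub)$ along the equivalence of triangulated categories $I_{M_\infty}^\rmE\colon \BEC_{\RR\mbox{\scriptsize -}c}(\Bbbk_{M_\infty}^\sub)\simto \BEC_{\RR\mbox{\scriptsize -}c}(\I\Bbbk_{M_\infty})$ from Theorem \ref{thm2.6} (3). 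The key technical input is Proposition \ref{prop3.4}, which ensures that $I_{M_\infty}^\rmE$ is t-exact with respect to the standard t-structures on $\BEC(\Bbbk_{M_\infty}^\sub)$ and $\BEC(\I\Bbbk_{M_\infty})$; restricting to the $\RR$-constructible subcategories, one gets that $K\in \bfE_{\RR\mbox{\scriptsize -}c}^{\leq 0}(\Bbbk_{M_\infty}^\sub)$ if and only if $I_{M_\infty}^\rmE K\in \bfE_{\RR\mbox{\scriptsize -}c}^{\leq 0}(\I\Bbbk_{M_\infty})$ (and similarly for the $\geq 0$ part), with the analogous equivalence for the inverse functor.

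First I would verify orthogonality: for $K_1\in \bfE_{\RR\mbox{\scriptsize -}c}^{\leq 0}(\Bbbk_{M_\infty}^\sub)$ and $K_2\in \bfE_{\RR\mbox{\scriptsize -}c}^{\geq 0}(\Bbbk_{M_\infty}^\sub)[-1]$, the vanishing of $\Hom_{\BEC_{\RR\mbox{\scriptsize -}c}(\Bbbk_{M_\infty}^\sub)}(K_1,K_2)$ is inherited via the full-subcategory inclusion from the orthogonality of the ambient t-structure $(\bfE^{\leq 0}(\Bbbk_{M_\infty}^\sub),\bfE^{\geq 0}(\Bbbk_{M_\infty}^\sub))$ on $\BEC(\Bbbk_{M_\infty}^\sub)$. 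Next, I would produce the truncation triangle: given $K\in \BEC_{\RR\mbox{\scriptsize -}c}(\Bbbk_{M_\infty}^\sub)$, apply $I_{M_\infty}^\rmE$ to get $I_{M_\infty}^\rmE K\in \BEC_{\RR\mbox{\scriptsize -}c}(\I\Bbbk_{M_\infty})$, and truncate using the known t-structure there to obtain a distinguished triangle
\[F_1 \to I_{M_\infty}^\rmE K \to F_2 \xrightarrow{+1}\]
with $F_1\in \bfE_{\RR\mbox{\scriptsize -}c}^{\leq 0}(\I\Bbbk_{M_\infty})$ and $F_2\in \bfE_{\RR\mbox{\scriptsize -}c}^{\geq 0}(\I\Bbbk_{M_\infty})[-1]$. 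Pulling this back through the inverse equivalence supplied by Theorem \ref{thm2.6} (3) yields a distinguished triangle $K_1\to K\to K_2\xrightarrow{+1}$ in $\BEC_{\RR\mbox{\scriptsize -}c}(\Bbbk_{M_\infty}^\sub)$, and the t-exactness equivalence above places $K_1$ in $\bfE_{\RR\mbox{\scriptsize -}c}^{\leq 0}(\Bbbk_{M_\infty}^\sub)$ and $K_2$ in $\bfE_{\RR\mbox{\scriptsize -}c}^{\geq 0}(\Bbbk_{M_\infty}^\sub)[-1]$. The axioms for a t-structure are then all satisfied.

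There is no serious obstacle; the whole argument is a formal transport of structure once Proposition \ref{prop3.4} and Theorem \ref{thm2.6} (3) are in hand. The only point that deserves a brief comment is that the functor denoted $\lambda_{M_\infty}^\rmE$ in \S\ref{2.11} is a priori defined as the inverse of $I_{M_\infty}^\rmE\colon \BEC(\Bbbk_{M_\infty}^\sub)\simto \BEC_{\I\RR\mbox{\scriptsize -}c}(\I\Bbbk_{M_\infty})$, whereas here we need the inverse of the \emph{restricted} equivalence onto $\BEC_{\RR\mbox{\scriptsize -}c}(\I\Bbbk_{M_\infty})\supset \BEC_{\I\RR\mbox{\scriptsize -}c}(\I\Bbbk_{M_\infty})$; but since Theorem \ref{thm2.6} (3) furnishes such an inverse and it must agree with $\lambda_{M_\infty}^\rmE$ on the overlap (this is exactly the content of the commutative diagram displayed after Theorem \ref{thm2.6}), no ambiguity arises. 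Thus the proof reduces to checking the two bullet points above, which are immediate.
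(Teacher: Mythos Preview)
Your proposal is correct but takes a different route from the paper. The paper argues directly within the subanalytic category: given $K\in\BEC_{\RR\mbox{\scriptsize -}c}(\Bbbk_{M_\infty}^\sub)$, it takes the truncation triangle $\tau^{\leq 0}K\to K\to\tau^{\geq 1}K\xrightarrow{+1}$ from the ambient t-structure on $\BEC(\Bbbk_{M_\infty}^\sub)$ and then checks by hand, using Definition~\ref{def3.21} and the t-exactness of $\bfE i_{U_\infty}^{-1}$ and of $\Bbbk_{U_\infty}^{\rmE,\sub}\Potimes\Q_{U_\infty}^\sub\rho_{U_\infty\times\RR_\infty\ast}(\cdot)$, that $\tau^{\geq 1}K$ is again $\RR$-constructible. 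You instead transport the known t-structure from $\BEC_{\RR\mbox{\scriptsize -}c}(\I\Bbbk_{M_\infty})$ via the equivalence of Theorem~\ref{thm2.6}(3), using the t-exactness of $I_{M_\infty}^\rmE$ from Proposition~\ref{prop3.4} (and its consequence that a fully faithful t-exact functor reflects the aisle) to identify the transported aisles with the intersections $\bfE^{\leq 0}\cap\BEC_{\RR\mbox{\scriptsize -}c}$, $\bfE^{\geq 0}\cap\BEC_{\RR\mbox{\scriptsize -}c}$. Your argument is more economical given the machinery already established, and is in fact the same mechanism the paper itself uses immediately afterwards for Proposition~\ref{prop3.6}; the paper's direct argument has the advantage of being self-contained on the subanalytic side and not invoking the ind-sheaf comparison.
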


\begin{proof}
It is enough to show that for any $K\in \BEC_{\RR\mbox{\scriptsize -}c}(\Bbbk_{M_\infty}^\sub)$
there exists a distinguished triangle in $\BEC_{\RR\mbox{\scriptsize -}c}(\Bbbk_{M_\infty}^\sub)$
$$K_1 \to K \to K_2 \xrightarrow{+1},$$
with $K_1\in \bfE^{\leq 0}_{\RR\mbox{\scriptsize -}c}(\Bbbk_{M_\infty}^\sub),
K_2\in \bfE^{\geq 1}_{\RR\mbox{\scriptsize -}c}(\Bbbk_{M_\infty}^\sub)$.
Let $K\in \BEC_{\RR\mbox{\scriptsize -}c}(\Bbbk_{M_\infty}^\sub)$.
Then there exists a distinguished triangle in $\BEC(\Bbbk_{M_\infty}^\sub)$
$$\tau^{\leq 0}K \to K \to \tau^{\geq 1}K \xrightarrow{+1}.$$
Here, $\tau^\bullet$ is the truncation functor with respect to
 $(\bfE^{\leq 0}(\Bbbk_{M_\infty}^\sub), \bfE^{\geq 0}(\Bbbk_{M_\infty}^\sub))$.
 Let us prove that $\tau^{\leq 0}K\in \bfE^{\leq 0}_{\RR\mbox{\scriptsize -}c}(\Bbbk_{M_\infty}^\sub)$
and $\tau^{\geq 1}K\in \bfE^{\geq 1}_{\RR\mbox{\scriptsize -}c}(\Bbbk_{M_\infty}^\sub)$.
 It is enough to show that $\tau^{\geq 1}K\in \bfE^{\geq 1}_{\RR\mbox{\scriptsize -}c}(\Bbbk_{M_\infty}^\sub)$.
 Let $U$ be an open subset of $M$ which is subanalytic and relatively compact on $\che{M}$. 
 Since $K\in \BEC_{\RR\mbox{\scriptsize -}c}(\Bbbk_{M_\infty}^\sub)$,
 there exists $\SF\in \BDC_{\RR\mbox{\scriptsize -}c}(\Bbbk_{M_\infty\times\RR_\infty})$
 such that $$\bfE i_{U_\infty}^{-1}K\simeq
 \Bbbk_{U_\infty}^{\rmE, \sub}\Potimes \Q_{U_\infty}^\sub\rho_{U_\infty\times\RR_\infty\ast}\SF.$$
 Since functors $\bfE i_{U_\infty}^{-1}$ and 
$\Bbbk_{U_\infty}^{\rmE, \sub}\Potimes \Q_{U_\infty}^\sub\rho_{U_\infty\times\RR_\infty\ast}(\cdot)$
 are left t-exact,
 we have isomorphisms in $\BEC(\Bbbk_{UM_\infty}^\sub)$
 \begin{align*}
 \bfE i_{U_\infty}^{-1}(\tau^{\geq 1}K) &\simeq \tau^{\geq 1}(\bfE i_{U_\infty}^{-1}K)\\
 &\simeq
 \tau^{\geq 1}(\Bbbk_{U_\infty}^{\rmE, \sub}\Potimes \Q_{U_\infty}^\sub\rho_{U_\infty\times\RR_\infty\ast}\SF)\\
 &\simeq 
\Bbbk_{U_\infty}^{\rmE, \sub}\Potimes \Q_{U_\infty}^\sub\rho_{U_\infty\times\RR_\infty\ast}(\tau^{\geq 1}\SF).
 \end{align*}
This implies that 
$\tau^{\geq 1}K\in \bfE^{\geq 1}_{\RR\mbox{\scriptsize -}c}(\Bbbk_{M_\infty}^\sub)$. 
\end{proof}

We shall set
$\ZEC_{\RR\mbox{\scriptsize -}c}(\Bbbk_{M_\infty}^\sub) :=
\bfE_{\RR\mbox{\scriptsize -}c}^{\leq 0}(\Bbbk_{M_\infty}^\sub)\cap
\bfE_{\RR\mbox{\scriptsize -}c}^{\geq 0}(\Bbbk_{M_\infty}^\sub).$

\begin{proposition}\label{prop3.6}
The functors $I_{M_\infty}^\rmE, \lambda_{M_\infty}^\rmE$ induce an equivalence of abelian categories:
\[\xymatrix@M=7pt@C=45pt{
\ZEC_{\RR\mbox{\scriptsize -}c}(\Bbbk_{M_\infty}^\sub)\ar@<0.8ex>@{->}[r]^-{I_{M_\infty}^\rmE}_-\sim
&
\ZEC_{\RR\mbox{\scriptsize -}c}(\I\Bbbk_{M_\infty})
\ar@<0.8ex>@{->}[l]^-{\lambda_{M_\infty}^\rmE}.
}\]
\end{proposition}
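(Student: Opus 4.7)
The plan is to mimic the argument used for Proposition \ref{prop3.4}, now relative to the $\RR$-constructible subcategories. The two inputs we already have at our disposal are: (a) Theorem \ref{thm2.6}~(3), which gives an equivalence of triangulated categories $I_{M_\infty}^\rmE \colon \BEC_{\RR\mbox{\scriptsize -}c}(\Bbbk_{M_\infty}^\sub) \simto \BEC_{\RR\mbox{\scriptsize -}c}(\I\Bbbk_{M_\infty})$ with quasi-inverse $\lambda_{M_\infty}^\rmE$; and (b) Proposition \ref{prop3.4}, which asserts that $I_{M_\infty}^\rmE \colon \BEC(\Bbbk_{M_\infty}^\sub) \to \BEC(\I\Bbbk_{M_\infty})$ is t-exact for the standard t-structures. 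Invoking \cite[Prop.\:10.1.6]{KS90}, it suffices to show that the triangulated equivalence in (a) is t-exact for the induced t-structures on the $\RR$-constructible subcategories.

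First, I would verify that $I_{M_\infty}^\rmE$ sends $\bfE^{\leq 0}_{\RR\mbox{\scriptsize -}c}(\Bbbk_{M_\infty}^\sub)$ into $\bfE^{\leq 0}_{\RR\mbox{\scriptsize -}c}(\I\Bbbk_{M_\infty})$. Let $K\in \bfE^{\leq 0}_{\RR\mbox{\scriptsize -}c}(\Bbbk_{M_\infty}^\sub)$. By definition this means $K\in \bfE^{\leq 0}(\Bbbk_{M_\infty}^\sub)$ and $K\in \BEC_{\RR\mbox{\scriptsize -}c}(\Bbbk_{M_\infty}^\sub)$. Proposition \ref{prop3.4} gives $I_{M_\infty}^\rmE(K)\in \bfE^{\leq 0}(\I\Bbbk_{M_\infty})$, while Theorem \ref{thm2.6}~(3) gives $I_{M_\infty}^\rmE(K)\in \BEC_{\RR\mbox{\scriptsize -}c}(\I\Bbbk_{M_\infty})$. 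Intersecting, $I_{M_\infty}^\rmE(K)\in \bfE^{\leq 0}_{\RR\mbox{\scriptsize -}c}(\I\Bbbk_{M_\infty})$.

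The same reasoning, applied verbatim with $\geq 0$ in place of $\leq 0$, shows that $I_{M_\infty}^\rmE$ sends $\bfE^{\geq 0}_{\RR\mbox{\scriptsize -}c}(\Bbbk_{M_\infty}^\sub)$ into $\bfE^{\geq 0}_{\RR\mbox{\scriptsize -}c}(\I\Bbbk_{M_\infty})$. Hence the restriction of $I_{M_\infty}^\rmE$ to the $\RR$-constructible subcategories is t-exact, and the same holds for its quasi-inverse $\lambda_{M_\infty}^\rmE$ by symmetry (or by the equivalence plus t-exactness of $I_{M_\infty}^\rmE$). Passing to the hearts, $\ZEC_{\RR\mbox{\scriptsize -}c}(\Bbbk_{M_\infty}^\sub) = \bfE^{\leq 0}_{\RR\mbox{\scriptsize -}c}(\Bbbk_{M_\infty}^\sub)\cap \bfE^{\geq 0}_{\RR\mbox{\scriptsize -}c}(\Bbbk_{M_\infty}^\sub)$ and analogously on the ind-sheaf side, we conclude the desired equivalence of abelian categories.

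There is essentially no obstacle beyond correctly assembling the two already established results: the work of checking t-exactness in the subanalytic/ind-sheaf dictionary has been done in Proposition \ref{prop3.4}, and the preservation of $\RR$-constructibility is exactly the content of Theorem \ref{thm2.6}~(3). The only mild subtlety worth flagging is that both t-structures on the $\RR$-constructible sides are defined as intersections with the ambient standard t-structures, so t-exactness of the ambient functor automatically transfers once $\RR$-constructibility is preserved; no independent argument on the $\RR$-constructible t-structures is required.
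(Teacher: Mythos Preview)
Your proof is correct and follows the same approach as the paper, which simply writes ``This follows from Proposition \ref{prop3.4}, Theorem \ref{thm2.6} (3).'' You have merely unpacked the details of how t-exactness on the ambient categories (Proposition \ref{prop3.4}) combines with the $\RR$-constructible equivalence (Theorem \ref{thm2.6}(3)) to yield an equivalence of hearts.
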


\begin{proof}
This follows from Proposition \ref{prop3.4}, Theorem \ref{thm2.6} (3).
\end{proof}

\subsection{$\CC$-Constructible Enhanced Subanalytic Sheaves}
Throughout of this subsection, let $X$ be a complex manifold.

\begin{definition}\label{def-normalsub}
We say that an enhanced subanalytic sheaf
$K\in\ZEC(\CC_X^\sub)$ has a normal form along a normal crossing divisor $D$
if the following three conditions are satisfied: 
\begin{itemize}
\setlength{\itemsep}{-1pt}
\item[(i)]
$\pi^{-1}(\rho_{X\ast}\CC_{X\setminus D})\otimes K\simto K$,

\item[(ii)]
for any $x\in X\setminus D$ there exist an open neighborhood $U_x\subset X\setminus D$
of $x$ and a non-negative integer $k$ such that
$K|_{U_x}\simeq (\CC_{U_x}^{\rmE,\sub})^{\oplus k},$

\item[(iii)]
for any $x\in D$ there exist an open neighborhood $U_x\subset X$ of $x$,
a good set of irregular values $\{\varphi_i\}_i$ on $(U_x, D\cap U_x)$
and a finite sectorial open covering $\{U_{x, j}\}_j$ of $U_x\bs D$
such that for any $j$
\[\pi^{-1}(\rho_{U_x\ast}\CC_{U_{x, j}})\otimes K|_{U_x}\simeq
\bigoplus_i \EE_{U_{x, j} | U_x}^{\Re\varphi_i, \sub}.\]
See \S\ref{subsec_ana} for the definition of good set of irregular values
and \S\ref{subsec2.10} for the definition of 
$\EE_{U_{x, j} | U_x}^{\Re\varphi_i,\sub}$.
\end{itemize}
\end{definition}

We shall denote by $\ZEC_{\rm norm}(\I\CC_{X(D)})$
the category of enhanced ind-sheaves which have normal form along $D$
and denote by $\ZEC_{\rm norm}(\CC_{X(D)}^\sub)$
the category of enhanced subanalytic sheaves which have normal form along $D$.
Remark that $\ZEC_{\rm norm}(\I\CC_{X(D)})$ is subcategory of
$\ZEC_{\RR\mbox{\scriptsize -}c}(\I\CC_X)$.
See \cite[Prop.\:2.11]{Ito23} for the details.

\begin{proposition}\label{prop3.8}
The functors $I_X^\rmE, \lambda_X^\rmE$
induce an equivalence of categories:
\[\xymatrix@M=7pt@C=45pt{
\ZEC_{\rm norm}(\CC_{X(D)}^\sub)\ar@<0.8ex>@{->}[r]^-{I_{X}^\rmE}_-\sim
&
\ZEC_{\rm norm}(\I\CC_{X(D)})
\ar@<0.8ex>@{->}[l]^-{\lambda_{X}^\rmE}.
}\]
\end{proposition}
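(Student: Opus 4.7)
The plan is to leverage Proposition \ref{prop3.6}, which provides an equivalence of abelian categories between $\ZEC_{\RR\mbox{\scriptsize -}c}(\CC_X^\sub)$ and $\ZEC_{\RR\mbox{\scriptsize -}c}(\I\CC_X)$ via the mutually inverse functors $I_X^\rmE$ and $\lambda_X^\rmE$. Since every enhanced ind-sheaf with a normal form along $D$ is $\RR$-constructible (by \cite[Prop.\:2.11]{Ito23}), and the analogous fact should hold for its subanalytic counterpart, both $\ZEC_{\rm norm}(\CC_{X(D)}^\sub)$ and $\ZEC_{\rm norm}(\I\CC_{X(D)})$ embed into the $\RR$-constructible hearts. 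It therefore suffices to check that $I_X^\rmE$ and $\lambda_X^\rmE$ each preserve the three defining conditions of a normal form, since then the equivalence of Proposition \ref{prop3.6} restricts to the asserted equivalence.

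For the forward direction I would establish three compatibility lemmas. First, using the right-hand commutative diagram at the end of Section \ref{2.11}, one has $I_U^\rmE(\CC_U^{\rmE,\sub}) \simeq \CC_U^\rmE$ for every real analytic open subset $U \subset X$, since $e_U^\sub \CC_U$ corresponds under $I_U^\rmE$ to $e_U I_U(\rho_{U\ast}\CC_U) \simeq e_U \iota_U \CC_U$ by Theorem \ref{thm2.26}. Second, since $I_X^\rmE$ intertwines $\pi^{-1}(-)\otimes(-)$ with its ind-sheaf analogue and since $I_X(\rho_{X\ast}\CC_{X \setminus D}) \simeq \iota_X \CC_{X \setminus D}$ (again by Theorem \ref{thm2.26} applied to the $\RR$-constructible sheaf $\CC_{X \setminus D}$), condition (i) of Definition \ref{def-normalsub} transforms into condition (i) of Definition \ref{def-normal}. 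Third, one needs $I_X^\rmE(\EE_{U_{x,j}|U_x}^{\Re\varphi_i,\sub}) \simeq \EE_{U_{x,j}|U_x}^{\Re\varphi_i}$; combined with the commutation of $I_X^\rmE$ with finite direct sums, this yields the preservation of conditions (ii) and (iii). The converse direction for $\lambda_X^\rmE$ then follows formally from the same identifications read backwards through the unit and counit isomorphisms of the adjoint pair $(I_X^\rmE, J_X^\rmE)$ established in Theorem \ref{thm2.6}.

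The main obstacle is the third identification $I_X^\rmE(\EE^{\Re\varphi,\sub}_{U|X}) \simeq \EE^{\Re\varphi}_{U|X}$, because the two exponentials are presented with superficially different indicator sheaves: $\CC_{\{t + \varphi = 0\}}$ in Section \ref{subsec2.10} versus $\iota_{X \times \RR_\infty}\CC_{\{t + \varphi \geq 0\}}$ in Section \ref{subsec2.7}. Applying $I_X^\rmE$ to the subanalytic expression, using $I_X^\rmE(\CC_X^{\rmE,\sub}) \simeq \CC_X^\rmE$ and the monoidality of $I_X^\rmE$ with respect to $\Potimes$, one arrives at an object of the form $\CC_X^\rmE \Potimes \Q_X\iota_{X \times \RR_\infty}\CC_{\{t + \varphi = 0\}}$, and the task is to identify this with $\CC_X^\rmE \Potimes \Q_X\iota_{X \times \RR_\infty}\CC_{\{t + \varphi \geq 0\}}$ inside $\BEC(\I\CC_X)$. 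This requires a careful manipulation within the enhanced ind-sheaf formalism of \cite{DK16, DK19}, exploiting the absorbing behavior of $\CC_X^\rmE$ against indicator sheaves of half-spaces and hyperplanes in the $t$-variable; it is the only genuinely non-formal step in the argument.
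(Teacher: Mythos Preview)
Your approach is essentially the same as the paper's: verify that $I_X^\rmE$ and $\lambda_X^\rmE$ each preserve conditions (i)--(iii) of the normal form, using that $I_X^\rmE$ commutes with $\pi^{-1}(-)\otimes(-)$, with restriction to opens, and sends $\CC_U^{\rmE,\sub}$ and $\EE^{\Re\varphi,\sub}$ to their ind-sheaf counterparts.

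There is one small logical slip. You invoke Proposition~\ref{prop3.6}, which is an equivalence on the $\RR$-constructible hearts; to apply it you need $\ZEC_{\rm norm}(\CC_{X(D)}^\sub)\subset \ZEC_{\RR\mbox{\scriptsize -}c}(\CC_X^\sub)$ a~priori, but in the paper this inclusion is obtained only as a \emph{corollary} of Proposition~\ref{prop3.8}. The paper avoids this circularity by invoking Proposition~\ref{prop3.4} instead, which gives the equivalence $\ZEC(\CC_X^\sub)\simeq \ZEC_{\I\RR\mbox{\scriptsize -}c}(\I\CC_X)$: on the subanalytic side nothing beyond membership in $\ZEC(\CC_X^\sub)$ is needed (this is by definition), and on the ind-sheaf side one only needs $\ZEC_{\rm norm}(\I\CC_{X(D)})\subset \ZEC_{\I\RR\mbox{\scriptsize -}c}(\I\CC_X)$, which follows from \cite[Prop.\:2.11]{Ito23} and the inclusion $\BEC_{\RR\mbox{\scriptsize -}c}\subset \BEC_{\I\RR\mbox{\scriptsize -}c}$. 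Replacing your reference to Proposition~\ref{prop3.6} by Proposition~\ref{prop3.4} removes the issue.

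Regarding your ``main obstacle'': you are right that the two exponential objects are defined with different indicator sets ($\{t+\varphi=0\}$ versus $\{t+\varphi\geq 0\}$), so the identification $I_X^\rmE(\EE^{\Re\varphi,\sub}_{U|X})\simeq \EE^{\Re\varphi}_{U|X}$ is not entirely tautological. The paper, however, simply records this isomorphism as a ``fact'' without further comment, and treats it as part of the compatibility package established in \cite{Ito21b}. Your outlined argument (monoidality of $I_X^\rmE$ for $\Potimes$ plus the absorbing behaviour of $\CC_X^\rmE$ under convolution with $\CC_{\{t+\varphi=0\}}$ versus $\CC_{\{t+\varphi\geq 0\}}$) is the right way to justify it; you are being more careful than the paper here, not less.

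Finally, the paper does spell out the $\lambda_X^\rmE$ direction explicitly rather than saying it ``follows formally'', but the argument is indeed symmetric to the $I_X^\rmE$ direction and uses the mirror compatibilities from \cite[Prop.\:3.7, 3.19]{Ito21b}, so your shortcut is harmless.
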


\begin{proof}
By Proposition \ref{prop3.4},
it is enough to show that 
$I_X^\rmE(\,\ZEC_{\rm norm}(\CC_{X(D)}^\sub)\,)\subset \ZEC_{\rm norm}(\I\CC_{X(D)})$
and 
$\lambda_X^\rmE(\,\ZEC_{\rm norm}(\I\CC_{X(D)})\,)\subset \ZEC_{\rm norm}(\CC_{X(D)}^\sub)$.

Let $K\in \ZEC_{\rm norm}(\CC_{X(D)}^\sub)$.
Then we have an isomorphism
$\pi^{-1}(\rho_{X\ast}\CC_{X\setminus D})\otimes K\simto K$ in $\BEC(\CC_X^\sub)$
and hence there exist isomorphisms in $\BEC(\I\CC_X)$
$$\pi^{-1}(\iota_{X}\CC_{X\setminus D})\otimes I_X^\rmE K
\simeq 
I_X^\rmE(\pi^{-1}(\rho_{X\ast}\CC_{X\setminus D})\otimes K)
\simeq
I_X^\rmE K,$$
where in the second isomorphism we used \cite[Prop.\:3.7 (2)(iii),(vi), (4)(i)]{Ito21b}.
This implies that $I_X^\rmE K$ satisfies the condition (i) in Definition \ref{def-normal}.
Let $x\in X\setminus D$.
Then there exists an open neighborhood $U_x\subset X\setminus D$
of $x$ and a non-negative integer $k$ such that
$K|_{U_x}\simeq (\CC_{U_x}^{\rmE,\sub})^{\oplus k}$
and hence we have isomorphism in $\BEC(\I\CC_{U_x})$
$$(I_X^\rmE K)|_{U_x}
\simeq
I_{U_x}^\rmE(K|_{U_x})
\simeq
I_{U_x}^\rmE((\CC_{U_x}^{\rmE,\sub})^{\oplus k})
\simeq
(\CC_{U_x}^{\rmE})^{\oplus k},$$
where in the first isomorphism we used \cite[Prop.\:3.19 (2)(i)]{Ito21b}
and the third isomorphism we used the fact that
$I_{U_x}^\rmE(\CC_{U_x}^{\rmE,\sub}) \simeq \CC_{U_x}^{\rmE}$.
This implies that $I_X^\rmE K$ satisfies the condition (ii) in Definition \ref{def-normal}.
Let $x\in D$. Then there exist an open neighborhood $U_x\subset X$ of $x$,
a good set of irregular values $\{\varphi_i\}_i$ on $(U_x, D\cap U_x)$
and a finite sectorial open covering $\{U_{x, j}\}_j$ of $U_x\bs D$
such that for any $j$
\[\pi^{-1}(\rho_{U_x\ast}\CC_{U_{x, j}})\otimes K|_{U_x}\simeq
\bigoplus_i \EE_{U_{x, j} | U_x}^{\Re\varphi_i, \sub}.\]
Hence, for any $j$, we have isomorphisms in $\BEC(\I\CC_{U_x})$
$$\pi^{-1}(\iota_{U_x}\CC_{U_{x, j}})\otimes (I_X^\rmE K)|_{U_x}
\simeq
I_X^\rmE(\pi^{-1}(\rho_{U_x\ast}\CC_{U_{x, j}})\otimes K|_{U_x})
\simeq
I_X^\rmE\left(\bigoplus_i \EE_{U_{x, j} | U_x}^{\Re\varphi_i, \sub}\right)
\simeq
\bigoplus_i \EE_{U_{x, j} | U_x}^{\Re\varphi_i},$$
where in the first isomorphism we used \cite[Prop.\:3.7 (2)(iii),(vi), (4)(i)]{Ito21b}
and in the third isomorphism we used the fact that 
$I_X^\rmE(\EE_{U_{x, j} | U_x}^{\Re\varphi_i, \sub}) \simeq \EE_{U_{x, j} | U_x}^{\Re\varphi_i}$.
This implies that $I_X^\rmE K$ satisfies the condition (iii) in Definition \ref{def-normal}.
Therefore, we have $I_X^\rmE K\in \ZEC_{\rm norm}(\I\CC_{X(D)})$.

Let $K\in \ZEC_{\rm norm}(\I\CC_{X(D)})$.
Then we have an isomorphism
$\pi^{-1}(\iota_{X}\CC_{X\setminus D})\otimes K\simto K$ in $\BEC(\I\CC_X)$
and hence there exist isomorphisms in $\BEC(\CC_X^\sub)$
$$\pi^{-1}(\rho_{X\ast}\CC_{X\setminus D})\otimes \lambda_X^\rmE K
\simeq 
\lambda_X^\rmE(\pi^{-1}(\iota_{X}\CC_{X\setminus D})\otimes K)
\simeq
\lambda_X^\rmE K,$$
where in the second isomorphism we used \cite[Prop.\:3.7 (3)(i), (4)(ii),(iv)]{Ito21b}.
This implies that $\lambda_X^\rmE K$ satisfies the condition (i) in Definition \ref{def-normalsub}.
Let $x\in X\setminus D$.
Then there exists an open neighborhood $U_x\subset X\setminus D$
of $x$ and a non-negative integer $k$ such that
$K|_{U_x}\simeq (\CC_{U_x}^{\rmE})^{\oplus k}$
and hence we have isomorphism in $\BEC(\CC_{U_x}^\sub)$
$$(\lambda_X^\rmE K)|_{U_x}
\simeq
\lambda_{U_x}^\rmE(K|_{U_x})
\simeq
\lambda_{U_x}^\rmE((\CC_{U_x}^{\rmE})^{\oplus k})
\simeq
(\CC_{U_x}^{\rmE,\sub})^{\oplus k},$$
where in the first isomorphism we used \cite[Prop.\:3.19 (4)(i)]{Ito21b}
and the third isomorphism we used the fact that
$\lambda_{U_x}^\rmE(\CC_{U_x}^{\rmE}) \simeq \CC_{U_x}^{\rmE,\sub}$.
This implies that $\lambda_X^\rmE K$ satisfies the condition (ii) in Definition \ref{def-normalsub}.
Let $x\in D$. Then there exist an open neighborhood $U_x\subset X$ of $x$,
a good set of irregular values $\{\varphi_i\}_i$ on $(U_x, D\cap U_x)$
and a finite sectorial open covering $\{U_{x, j}\}_j$ of $U_x\bs D$
such that for any $j$
\[\pi^{-1}(\rho_{U_x\ast}\CC_{U_{x, j}})\otimes K|_{U_x}\simeq
\bigoplus_i \EE_{U_{x, j} | U_x}^{\Re\varphi_i}.\]
Hence, for any $j$, we have isomorphisms in $\BEC(\CC_{U_x}^\sub)$
$$\pi^{-1}(\rho_{U_x\ast}\CC_{U_{x, j}})\otimes (\lambda_X^\rmE K)|_{U_x}
\simeq
\lambda_X^\rmE(\pi^{-1}(\iota_{U_x}\CC_{U_{x, j}})\otimes K|_{U_x})
\simeq
\lambda_X^\rmE\left(\bigoplus_i \EE_{U_{x, j} | U_x}^{\Re\varphi_i}\right)
\simeq
\bigoplus_i \EE_{U_{x, j} | U_x}^{\Re\varphi_i, \sub},$$
where in the first isomorphism we used \cite[Prop.\:3.7 (3)(i), (4)(ii),(iv)]{Ito21b}
and in the third isomorphism we used the fact that 
$\lambda_X^\rmE(\EE_{U_{x, j} | U_x}^{\Re\varphi_i}) \simeq \EE_{U_{x, j} | U_x}^{\Re\varphi_i, \sub}$.
This implies that $\lambda_X^\rmE K$ satisfies the condition (iii) in Definition \ref{def-normalsub}.
Therefore, we have $\lambda_X^\rmE K\in \ZEC_{\rm norm}(\CC_{X(D)}^\sub)$.
\end{proof}

\begin{corollary}
The category $\ZEC_{\rm norm}(\CC_{X(D)}^\sub)$
is full subcategory of $\ZEC_{\RR\mbox{\scriptsize -}c}(\CC_{M_\infty}^\sub)$.
\end{corollary}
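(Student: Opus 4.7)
The plan is to compare the subanalytic and ind-sheaf sides using the equivalences already set up in this section. Since $X$ is a complex manifold, we regard it as the real analytic bordered space $X = (X,X)$, so that the statement reads $\ZEC_{\rm norm}(\CC_{X(D)}^\sub) \subset \ZEC_{\RR\mbox{\scriptsize -}c}(\CC_{X}^\sub)$ with $M_\infty = X$. The strategy is to push an object through $I_X^\rmE$, identify its image with a normal-form enhanced ind-sheaf (which is already known to be $\RR$-constructible on the ind-sheaf side), and then return through $\lambda_X^\rmE$ to land inside $\ZEC_{\RR\mbox{\scriptsize -}c}(\CC_X^\sub) = \ZEC_{\RR\mbox{\scriptsize -}c}(\CC_{M_\infty}^\sub)$.

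First I would take an arbitrary $K \in \ZEC_{\rm norm}(\CC_{X(D)}^\sub)$. By Proposition \ref{prop3.8}, the object $I_X^\rmE K$ lies in $\ZEC_{\rm norm}(\I\CC_{X(D)})$. Next, the remark stated just before Proposition \ref{prop3.8}, which cites \cite[Prop.\:2.11]{Ito23}, asserts $\ZEC_{\rm norm}(\I\CC_{X(D)}) \subset \ZEC_{\RR\mbox{\scriptsize -}c}(\I\CC_X)$, so in particular $I_X^\rmE K \in \ZEC_{\RR\mbox{\scriptsize -}c}(\I\CC_X)$. Finally, Proposition \ref{prop3.6} applied with $M_\infty = X$ yields an equivalence $\lambda_X^\rmE \colon \ZEC_{\RR\mbox{\scriptsize -}c}(\I\CC_X) \simto \ZEC_{\RR\mbox{\scriptsize -}c}(\CC_X^\sub)$, and since $K \simeq \lambda_X^\rmE I_X^\rmE K$ by Theorem \ref{thm2.6}(1), we conclude $K \in \ZEC_{\RR\mbox{\scriptsize -}c}(\CC_X^\sub)$. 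Fullness is then automatic, since both $\ZEC_{\rm norm}(\CC_{X(D)}^\sub)$ and $\ZEC_{\RR\mbox{\scriptsize -}c}(\CC_X^\sub)$ are defined as full subcategories of the common ambient category $\ZEC(\CC_X^\sub)$, so the inclusion between them inherits fullness.

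The only subtlety I expect is bookkeeping: one must verify that Proposition \ref{prop3.6}, specialized to the trivial bordering $M_\infty = X$, truly recovers $K$ from $I_X^\rmE K$, and that Definition \ref{def3.21} of $\RR$-constructibility for enhanced subanalytic sheaves is compatible with the ind-sheaf version of $\RR$-constructibility under $I_X^\rmE$. Both reduce to the fact that $I_X^\rmE$ and $\lambda_X^\rmE$ are mutually inverse equivalences at the $\RR$-constructible level (Theorem \ref{thm2.6}(3)), so no additional computation is required once the three-step inclusion chain above is put in place.
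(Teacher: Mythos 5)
Your proof is correct and follows essentially the same route as the paper: the author's own proof simply cites Propositions \ref{prop3.6} and \ref{prop3.8} together with the fact (from the remark before Proposition \ref{prop3.8}, i.e.\ \cite[Prop.\:2.11]{Ito23}) that $\ZEC_{\rm norm}(\I\CC_{X(D)})$ sits inside $\ZEC_{\RR\mbox{\scriptsize -}c}(\I\CC_X)$, which is exactly the three-step chain you spell out. Your additional observation that fullness is automatic because both categories are full subcategories of the common ambient category $\ZEC(\CC_X^\sub)$ is the right way to finish.
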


\begin{proof}
This follows from Propositions \ref{prop3.6}, \ref{prop3.8}
and the fact that $\ZEC_{\rm norm}(\CC_{X(D)}^\sub)$ is full subcategory of
$\ZEC_{\RR\mbox{\scriptsize -}c}(\I\CC_X)$.
\end{proof}

\begin{definition}\label{def-quasisub}
We say that an enhanced subanalytic sheaf
$K\in\ZEC(\CC_{X}^\sub)$ has a quasi-normal form along $D$
if it satisfies (i) and (ii) in Definition \ref{def-normalsub}, and if
for any $x\in D$ there exist an open neighborhood $U_x\subset X$ of $x$
and a ramification $r_x \colon U_x^{\rami}\to U_x$ of $U_x$ along $D_x := U_x\cap D$
such that $\bfE r_x^{-1}(K|_{U_x})$ has a normal form along $D_x^{\rami}:= r_x^{-1}(D_x)$.
\end{definition}

We shall denote by $\ZEC_{\rm q\mbox{\scriptsize -}norm}(\I\CC_{X(D)})$
the category of enhanced ind-sheaves which have quasi-normal form along $D$
and denote by $\ZEC_{\rm q\mbox{\scriptsize -}norm}(\CC_{X(D)}^\sub)$
the category of enhanced subanalytic sheaves which have quasi-normal form along $D$.
Remark that $\ZEC_{\rm q\mbox{\scriptsize -}norm}(\I\CC_{X(D)})$ is subcategory of
$\ZEC_{\RR\mbox{\scriptsize -}c}(\I\CC_X)$.
See \cite[Prop.\:3.12]{Ito20} for the details.

\begin{proposition}\label{prop3.11}
The functors $I_X^\rmE, \lambda_X^\rmE$ induce an equivalence of categories:
\[\xymatrix@M=7pt@C=45pt{
\ZEC_{\rm q\mbox{\scriptsize -}norm}(\CC_{X(D)}^\sub)\ar@<0.8ex>@{->}[r]^-{I_{X}^\rmE}_-\sim
&
\ZEC_{\rm q\mbox{\scriptsize -}norm}(\I\CC_{X(D)})
\ar@<0.8ex>@{->}[l]^-{\lambda_{X}^\rmE}.
}\]
\end{proposition}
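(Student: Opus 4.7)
The plan is to reduce Proposition \ref{prop3.11} to Proposition \ref{prop3.8} via the compatibility of $I_X^\rmE$ and $\lambda_X^\rmE$ with the pullback functor $\bfE r_x^{-1}$ along a ramification. Since by Proposition \ref{prop3.4} the pair $(I_X^\rmE, \lambda_X^\rmE)$ already gives an equivalence on the ambient categories $\ZEC(\CC_X^\sub)$ and $\ZEC_{\I\RR\mbox{\scriptsize -}c}(\I\CC_X)$, it suffices to show that each functor sends quasi-normal objects to quasi-normal objects.

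Let $K\in\ZEC_{\rm q\mbox{\scriptsize -}norm}(\CC_{X(D)}^\sub)$. First, exactly as in the proof of Proposition \ref{prop3.8}, conditions (i) and (ii) of Definition \ref{def-quasi} for $I_X^\rmE K$ follow directly from the compatibility of $I_X^\rmE$ with $\pi^{-1}(\rho_{X\ast}\CC_{X\setminus D})\otimes(\cdot)$ and with restriction to opens, using \cite[Prop.\:3.7 (2)(iii),(vi), (4)(i)]{Ito21b} and \cite[Prop.\:3.19 (2)(i)]{Ito21b}. For condition (iii), choose for $x\in D$ an open neighborhood $U_x$ and a ramification $r_x\colon U_x^\rami\to U_x$ such that $\bfE r_x^{-1}(K|_{U_x})$ has a normal form along $D_x^\rami$. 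The key step is to obtain a natural isomorphism
\[
\bfE r_x^{-1}\bigl((I_X^\rmE K)|_{U_x}\bigr)\simeq I_{U_x^\rami}^\rmE\bigl(\bfE r_x^{-1}(K|_{U_x})\bigr),
\]
which follows from the compatibility of $I^\rmE$ with restriction (\cite[Prop.\:3.19 (2)(i)]{Ito21b}) and with inverse images by morphisms of real analytic bordered spaces. Applying Proposition \ref{prop3.8} to $(U_x^\rami, D_x^\rami)$, the right-hand side has a normal form along $D_x^\rami$, whence $I_X^\rmE K$ satisfies (iii) in Definition \ref{def-quasi} and therefore lies in $\ZEC_{\rm q\mbox{\scriptsize -}norm}(\I\CC_{X(D)})$.

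The reverse inclusion for $\lambda_X^\rmE$ is proved by a completely parallel argument: for $K\in\ZEC_{\rm q\mbox{\scriptsize -}norm}(\I\CC_{X(D)})$, conditions (i) and (ii) of Definition \ref{def-quasisub} for $\lambda_X^\rmE K$ follow from \cite[Prop.\:3.7 (3)(i), (4)(ii),(iv)]{Ito21b} and \cite[Prop.\:3.19 (4)(i)]{Ito21b} as in Proposition \ref{prop3.8}, while condition (iii) is obtained from the analogous commutation $\bfE r_x^{-1}\circ\lambda_X^\rmE\simeq \lambda_{U_x^\rami}^\rmE\circ\bfE r_x^{-1}$ together with Proposition \ref{prop3.8} on $(U_x^\rami,D_x^\rami)$.

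The main obstacle is establishing the commutation of $I^\rmE$ (resp.\ $\lambda^\rmE$) with the pullback $\bfE r_x^{-1}$ for the ramification $r_x$. Since $r_x$ is a morphism of real analytic manifolds (finite and in particular subanalytic), this should be a formal consequence of the compatibility results collected in \cite[Prop.\:3.19]{Ito21b}, but care is required because one needs the compatibility to hold not only for open embeddings but also for the ramification map; if this is not already recorded verbatim, a short lemma deriving it from the definitions $I_{M_\infty}^\rmE(\Q^\sub_{M_\infty}\SF) = \Q_{M_\infty} I_{M_\infty\times\RR_\infty}\SF$ and the corresponding statement for ordinary ind-sheaves vs.\ subanalytic sheaves would need to be inserted. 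Once this compatibility is in hand the rest of the proof is mechanical.
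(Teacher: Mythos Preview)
Your proposal is correct and follows essentially the same route as the paper's proof: reduce via Proposition~\ref{prop3.4}, handle conditions (i) and (ii) exactly as in Proposition~\ref{prop3.8}, and for (iii) use the commutation $\bfE r_x^{-1}\circ I^\rmE\simeq I^\rmE\circ\bfE r_x^{-1}$ (resp.\ with $\lambda^\rmE$) together with Proposition~\ref{prop3.8} on $(U_x^\rami,D_x^\rami)$. Your caveat about the commutation with $\bfE r_x^{-1}$ is unnecessary: \cite[Prop.\:3.19 (2)(i), (4)(i)]{Ito21b} is stated for arbitrary morphisms of real analytic bordered spaces, so it applies directly to the ramification map $r_x$ and the paper simply cites it without further comment.
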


\begin{proof}
By Proposition \ref{prop3.4},
it is enough to prove that 
$I_X^\rmE(\,\ZEC_{\rm q\mbox{\scriptsize -}norm}(\CC_{X(D)}^\sub)\,)
\subset \ZEC_{\rm q\mbox{\scriptsize -}norm}(\I\CC_{X(D)})$
and 
$\lambda_X^\rmE(\,\ZEC_{\rm q\mbox{\scriptsize -}norm}(\I\CC_{X(D)})\,)
\subset \ZEC_{\rm q\mbox{\scriptsize -}norm}(\CC_{X(D)}^\sub)$.

Let $K\in \ZEC_{\rm q\mbox{\scriptsize -}norm}(\CC_{X(D)}^\sub)$.
By the same argument as in the proof of Proposition \ref{prop3.8},
$I_X^\rmE K$ satisfies the conditions (i), (ii) in Definition \ref{def-normal}.
Let $x\in D$.
Then there exist an open neighborhood $U_x\subset X$ of $x$
and a ramification $r_x \colon U_x^{\rami}\to U_x$ of $U_x$ along $D_x := U_x\cap D$
such that $\bfE r_x^{-1}(K|_{U_x})$ has a normal form along $D_x^{\rami}:= r_x^{-1}(D_x)$.
Hence $I_{U_x^\rami}^\rmE(\bfE r_x^{-1}(K|_{U_x}))$ has a normal form along $D_x^{\rami}:= r_x^{-1}(D_x)$
by Proposition \ref{prop3.8}.
Moreover we have isomorphisms in $\BEC(\I\CC_{U_x^\rami})$
$$I_{U_x^\rami}^\rmE(\bfE r_x^{-1}(K|_{U_x}))
\simeq 
\bfE r_x^{-1}((I_X^\rmE K)|_{U_x})$$
by \cite[Prop.\:3.19 (2)(i)]{Ito21b}.
This implies that $I_X^\rmE K$ satisfies the conditions (iii) in Definition \ref{def-quasi}.
Therefore, we have $I_X^\rmE K\in \ZEC_{\rm q\mbox{\scriptsize -}norm}(\I\CC_{X(D)})$.

Let $K\in \ZEC_{\rm q\mbox{\scriptsize -}norm}(\I\CC_{X(D)})$.
By the same argument as in the proof of Proposition \ref{prop3.8},
$\lambda_X^\rmE K$ satisfies the conditions (i), (ii) in Definition \ref{def-normalsub}.
Let $x\in D$.
Then there exist an open neighborhood $U_x\subset X$ of $x$
and a ramification $r_x \colon U_x^{\rami}\to U_x$ of $U_x$ along $D_x := U_x\cap D$
such that $\bfE r_x^{-1}(K|_{U_x})$ has a normal form along $D_x^{\rami}:= r_x^{-1}(D_x)$.
Hence $\lambda_{U_x^\rami}^\rmE(\bfE r_x^{-1}(K|_{U_x}))$
has a normal form along $D_x^{\rami}:= r_x^{-1}(D_x)$
by Proposition \ref{prop3.8}.
Moreover we have isomorphisms in $\BEC(\CC_{U_x^\rami}^\sub)$
$$\lambda_{U_x^\rami}^\rmE(\bfE r_x^{-1}(K|_{U_x}))
\simeq 
\bfE r_x^{-1}((\lambda_X^\rmE K)|_{U_x})$$
by \cite[Prop.\:3.19 (4)(i)]{Ito21b}.
This implies that $\lambda_X^\rmE K$ satisfies the conditions (iii) in Definition \ref{def-quasisub}.
Therefore, we have $\lambda_X^\rmE K\in \ZEC_{\rm q\mbox{\scriptsize -}norm}(\CC_{X(D)}^\sub)$.
\end{proof}

\begin{corollary}
The category $\ZEC_{\rm q\mbox{\scriptsize -}norm}(\CC_{X(D)}^\sub)$
is full subcategory of $\ZEC_{\RR\mbox{\scriptsize -}c}(\CC_{M_\infty}^\sub)$.
\end{corollary}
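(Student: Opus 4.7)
The plan is to imitate the proof of the preceding corollary for $\ZEC_{\rm norm}$ and combine three facts: the equivalence from Proposition \ref{prop3.11}, the equivalence from Proposition \ref{prop3.6}, and the known inclusion $\ZEC_{\rm q\mbox{\scriptsize -}norm}(\I\CC_{X(D)}) \subset \ZEC_{\RR\mbox{\scriptsize -}c}(\I\CC_X)$ proved as \cite[Prop.\:3.12]{Ito20}. Since Proposition \ref{prop3.11} already ensures that the functors $I_X^\rmE$ and $\lambda_X^\rmE$ carry the subanalytic quasi-normal-form category back and forth to the ind-sheaf quasi-normal-form category, the result should be an essentially purely formal transport of structure through these equivalences.

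First I would record that Proposition \ref{prop3.11} provides an equivalence
\[
I_X^\rmE\colon \ZEC_{\rm q\mbox{\scriptsize -}norm}(\CC_{X(D)}^\sub) \simto \ZEC_{\rm q\mbox{\scriptsize -}norm}(\I\CC_{X(D)}),
\]
with quasi-inverse $\lambda_X^\rmE$. On the ind-sheaf side, \cite[Prop.\:3.12]{Ito20} places $\ZEC_{\rm q\mbox{\scriptsize -}norm}(\I\CC_{X(D)})$ as a full subcategory of $\ZEC_{\RR\mbox{\scriptsize -}c}(\I\CC_X)$. Next I would invoke Proposition \ref{prop3.6}, which gives the equivalence $I_X^\rmE\colon \ZEC_{\RR\mbox{\scriptsize -}c}(\CC_X^\sub)\simto \ZEC_{\RR\mbox{\scriptsize -}c}(\I\CC_X)$ with inverse $\lambda_X^\rmE$.

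Concatenating these, for any $K\in \ZEC_{\rm q\mbox{\scriptsize -}norm}(\CC_{X(D)}^\sub)$ the object $I_X^\rmE K$ lies in $\ZEC_{\RR\mbox{\scriptsize -}c}(\I\CC_X)$, so $K\simeq \lambda_X^\rmE I_X^\rmE K$ lies in $\lambda_X^\rmE\bigl(\ZEC_{\RR\mbox{\scriptsize -}c}(\I\CC_X)\bigr) = \ZEC_{\RR\mbox{\scriptsize -}c}(\CC_X^\sub)$. Fullness of the inclusion then follows because $I_X^\rmE$ is fully faithful on $\ZEC(\CC_X^\sub)$ by Proposition \ref{prop3.4} (it is an equivalence onto $\ZEC_{\I\RR\mbox{\scriptsize -}c}(\I\CC_X)$), so any $\Hom$ in $\ZEC_{\RR\mbox{\scriptsize -}c}(\CC_X^\sub)$ between two quasi-normal-form objects corresponds bijectively to a $\Hom$ in $\ZEC_{\rm q\mbox{\scriptsize -}norm}(\I\CC_{X(D)})$, which is the same as a $\Hom$ in $\ZEC_{\rm q\mbox{\scriptsize -}norm}(\CC_{X(D)}^\sub)$ via Proposition \ref{prop3.11}.

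There is no substantive obstacle; the only thing to be careful about is keeping straight which side of the equivalence each object lives on and checking that fullness is preserved under each of the two equivalences in succession. In particular, one must note that the inclusion $\ZEC_{\rm q\mbox{\scriptsize -}norm}(\CC_{X(D)}^\sub)\hookrightarrow \ZEC_{\RR\mbox{\scriptsize -}c}(\CC_X^\sub)$ is literally the restriction of the identity functor, which is clearly compatible with the bijections of Hom-sets produced by the two equivalences, so fullness in the target is inherited from fullness in the source.
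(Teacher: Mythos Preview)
Your proof is correct and follows essentially the same approach as the paper: the paper's proof simply cites Propositions \ref{prop3.6}, \ref{prop3.11}, and \cite[Prop.\:3.12]{Ito20}, which are precisely the three ingredients you combine. Your version merely spells out the transport-of-structure argument in more detail.
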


\begin{proof}
This follows from Propositions \ref{prop3.6}, \ref{prop3.11} and \cite[Prop.\:3.12]{Ito20}.
\end{proof}

\begin{definition}\label{def-modisub}
We say that an enhanced subanalytic sheaf $K\in\ZEC(\CC_{X}^\sub)$
has a modified quasi-normal form along $H$
if it satisfies (i) and (ii) in Definition \ref{def-normalsub}, and if
for any $x\in H$ there exist an open neighborhood $U_x\subset X$ of $x$
and a modification $m_x \colon U_x^{\modi}\to U_x$ of $U_x$ along $H_x := U_x\cap H$
such that $\bfE m_x^{-1}(K|_{U_x})$ has a quasi-normal form along $D_x^{\modi} := m_x^{-1}(H_x)$.
\end{definition}

We shall denote by $\ZEC_{\rm modi}(\I\CC_{X(H)})$
the category of enhanced ind-sheaves which have modified quasi-normal form along $H$
and denote by $\ZEC_{\rm modi}(\CC_{X(H)}^\sub)$
the category of enhanced subanalytic sheaves which have modified quasi-normal form along $H$.
Remark that $\ZEC_{\rm modi}(\I\CC_{X(H)})$ is subcategory of
$\ZEC_{\RR\mbox{\scriptsize -}c}(\I\CC_X)$.
See \cite[Prop.\:3.15]{Ito20} for the details.

\begin{proposition}\label{prop3.14}
The functors $I_X^\rmE, \lambda_X^\rmE$ induce an equivalence of categories:
\[\xymatrix@M=7pt@C=45pt{
\ZEC_{\rm modi}(\CC_{X(H)}^\sub)\ar@<0.8ex>@{->}[r]^-{I_{X}^\rmE}_-\sim
&
\ZEC_{\rm modi}(\I\CC_{X(H)})
\ar@<0.8ex>@{->}[l]^-{\lambda_{X}^\rmE}.
}\]
\end{proposition}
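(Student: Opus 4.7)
The plan is to mirror the strategy used in Propositions \ref{prop3.8} and \ref{prop3.11}, reducing the claim about the modified quasi-normal form to the already-established quasi-normal form case. By Proposition \ref{prop3.4} we already know that $I_X^\rmE$ and $\lambda_X^\rmE$ are mutually inverse equivalences between $\ZEC(\CC_X^\sub)$ and $\ZEC_{\I\RR\mbox{\scriptsize -}c}(\I\CC_X)$, so it suffices to prove the two inclusions
$$I_X^\rmE\bigl(\,\ZEC_{\rm modi}(\CC_{X(H)}^\sub)\,\bigr)\subset \ZEC_{\rm modi}(\I\CC_{X(H)}),\qquad
\lambda_X^\rmE\bigl(\,\ZEC_{\rm modi}(\I\CC_{X(H)})\,\bigr)\subset \ZEC_{\rm modi}(\CC_{X(H)}^\sub).$$

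First I would handle $K\in\ZEC_{\rm modi}(\CC_{X(H)}^\sub)$. Conditions (i) and (ii) of Definition \ref{def-modi} for $I_X^\rmE K$ follow verbatim from the argument in Proposition \ref{prop3.8}: the tensor/pullback compatibilities in \cite[Prop.\:3.7]{Ito21b} together with $I_X^\rmE(\CC_{U_x}^{\rmE,\sub})\simeq \CC_{U_x}^\rmE$ do the job. For condition (iii), pick $x\in H$ and choose the open neighborhood $U_x$ and modification $m_x\colon U_x^\modi\to U_x$ witnessing that $\bfE m_x^{-1}(K|_{U_x})$ has a quasi-normal form along $D_x^\modi = m_x^{-1}(H_x)$. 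Apply Proposition \ref{prop3.11} to see that
$$I_{U_x^\modi}^\rmE\bigl(\bfE m_x^{-1}(K|_{U_x})\bigr)\in \ZEC_{\rm q\mbox{\scriptsize -}norm}(\I\CC_{U_x^\modi(D_x^\modi)}).$$
Then invoke the commutation $I^\rmE\circ \bfE m_x^{-1}\simeq \bfE m_x^{-1}\circ I^\rmE$ (on the level of restriction to $U_x$ combined with \cite[Prop.\:3.19 (2)(i)]{Ito21b}) to transfer the quasi-normal form to $\bfE m_x^{-1}\bigl((I_X^\rmE K)|_{U_x}\bigr)$, which verifies the defining condition for $I_X^\rmE K$ along $H$.

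The reverse direction for $\lambda_X^\rmE$ is entirely symmetric: conditions (i) and (ii) of Definition \ref{def-modisub} are checked as in Proposition \ref{prop3.8} using the left adjoint compatibilities \cite[Prop.\:3.7 (3)(i), (4)(ii), (iv)]{Ito21b} together with $\lambda_X^\rmE(\CC_{U_x}^\rmE)\simeq \CC_{U_x}^{\rmE,\sub}$; for (iii), one chooses the same neighborhood $U_x$ and modification $m_x$, applies Proposition \ref{prop3.11} to the quasi-normal form of $\bfE m_x^{-1}(K|_{U_x})$, and then uses the commutation $\lambda^\rmE\circ \bfE m_x^{-1}\simeq \bfE m_x^{-1}\circ \lambda^\rmE$ from \cite[Prop.\:3.19 (4)(i)]{Ito21b} to conclude.

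The only real point of attention, and hence the main obstacle, is verifying that $I^\rmE$ and $\lambda^\rmE$ commute with the inverse image $\bfE m_x^{-1}$ along the (projective) modification map, as well as with restriction to the open neighborhood $U_x$; once these compatibilities are in place the proof reduces mechanically to the already-proven Proposition \ref{prop3.11}. Fortunately, these commutations are precisely the content of \cite[Prop.\:3.19]{Ito21b}, so the proof is essentially a layered application of the same bookkeeping used for normal and quasi-normal forms.
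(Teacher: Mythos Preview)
Your proposal is correct and follows essentially the same approach as the paper: reduce via Proposition \ref{prop3.4} to the two inclusions, handle conditions (i) and (ii) by invoking the argument of Proposition \ref{prop3.8}, and for condition (iii) apply Proposition \ref{prop3.11} together with the commutation of $I^\rmE$ (resp.\ $\lambda^\rmE$) with $\bfE m_x^{-1}$ from \cite[Prop.\:3.19 (2)(i), (4)(i)]{Ito21b}.
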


\begin{proof}
By Proposition \ref{prop3.4},
it is enough to prove that 
$I_X^\rmE(\,\ZEC_{\rm modi}(\CC_{X(H)}^\sub)\,)
\subset \ZEC_{\rm modi}(\I\CC_{X(H)})$
and 
$\lambda_X^\rmE(\,\ZEC_{\rm modi}(\I\CC_{X(H)})\,)
\subset \ZEC_{\rm modi}(\CC_{X(H)}^\sub)$.

Let $K\in \ZEC_{\rm modi}(\CC_{X(H)}^\sub)$.
By the same argument as in the proof of Proposition \ref{prop3.8},
$I_X^\rmE K$ satisfies the conditions (i), (ii) in Definition \ref{def-normal}.
Let $x\in H$.
Then there exist an open neighborhood $U_x\subset X$ of $x$
and a modification $m_x \colon U_x^{\modi}\to U_x$ of $U_x$ along $H_x := U_x\cap H$
such that $\bfE m_x^{-1}(K|_{U_x})$ has a quasi-normal form along $D_x^{\modi} := m_x^{-1}(H_x)$.
Hence $I_{U_x^\modi}^\rmE(\bfE m_x^{-1}(K|_{U_x}))$ has a quasi-normal form
along $D_x^{\modi} := m_x^{-1}(H_x)$ by Proposition \ref{prop3.11}.
Moreover we have isomorphisms in $\BEC(\I\CC_{U_x^\modi})$
$$I_{U_x^\modi}^\rmE(\bfE m_x^{-1}(K|_{U_x}))
\simeq 
\bfE m_x^{-1}((I_X^\rmE K)|_{U_x})$$
by \cite[Prop.\:3.19 (2)(i)]{Ito21b}.
This implies that $I_X^\rmE K$ satisfies the conditions (iii) in Definition \ref{def-modi}.
Therefore, we have $I_X^\rmE K\in \ZEC_{\rm modi}(\I\CC_{X(H)})$.

Let $K\in \ZEC_{\rm modi}(\I\CC_{X(H)})$.
By the same argument as in the proof of Proposition \ref{prop3.8},
$\lambda_X^\rmE K$ satisfies the conditions (i), (ii) in Definition \ref{def-normal}.
Let $x\in H$.
Then there exist an open neighborhood $U_x\subset X$ of $x$
and a modification $m_x \colon U_x^{\modi}\to U_x$ of $U_x$ along $H_x := U_x\cap H$
such that $\bfE m_x^{-1}(K|_{U_x})$ has a quasi-normal form along $D_x^{\modi} := m_x^{-1}(H_x)$.
Hence $\lambda_{U_x^\modi}^\rmE(\bfE m_x^{-1}(K|_{U_x}))$ has a quasi-normal form
along $D_x^{\modi} := m_x^{-1}(H_x)$ by Proposition \ref{prop3.11}.
Moreover we have isomorphisms in $\BEC(\CC_{U_x^\modi}^\sub)$
$$\lambda_{U_x^\modi}^\rmE(\bfE m_x^{-1}(K|_{U_x}))
\simeq 
\bfE m_x^{-1}((\lambda_X^\rmE K)|_{U_x})$$
by \cite[Prop.\:3.19 (4)(i)]{Ito21b}.
This implies that $\lambda_X^\rmE K$ satisfies the conditions (iii) in Definition \ref{def-modisub}.
Therefore, we have $\lambda_X^\rmE K\in \ZEC_{\rm modi}(\CC_{X(H)}^\sub)$.
\end{proof}

\begin{corollary}
The category $\ZEC_{\rm modi}(\CC_{X(D)}^\sub)$
is full subcategory of $\ZEC_{\RR\mbox{\scriptsize -}c}(\CC_{M_\infty}^\sub)$.
\end{corollary}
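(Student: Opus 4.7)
The plan is to mimic the proofs of the two previous corollaries (for $\ZEC_{\rm norm}$ and $\ZEC_{\rm q\mbox{\scriptsize -}norm}$) almost verbatim, exploiting the three equivalences of categories already established. The key ingredients are Proposition \ref{prop3.6}, giving the equivalence $I_X^\rmE\colon\ZEC_{\RR\mbox{\scriptsize -}c}(\CC_X^\sub)\simto \ZEC_{\RR\mbox{\scriptsize -}c}(\I\CC_X)$; Proposition \ref{prop3.14}, giving the equivalence $I_X^\rmE\colon \ZEC_{\rm modi}(\CC_{X(H)}^\sub)\simto \ZEC_{\rm modi}(\I\CC_{X(H)})$; and \cite[Prop.\:3.15]{Ito20}, which says that $\ZEC_{\rm modi}(\I\CC_{X(H)})$ sits as a full subcategory of $\ZEC_{\RR\mbox{\scriptsize -}c}(\I\CC_X)$.

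Concretely, I would argue as follows. Take any $K\in \ZEC_{\rm modi}(\CC_{X(H)}^\sub)$. By Proposition \ref{prop3.14}, $I_X^\rmE K$ lies in $\ZEC_{\rm modi}(\I\CC_{X(H)})$, which by \cite[Prop.\:3.15]{Ito20} is contained in $\ZEC_{\RR\mbox{\scriptsize -}c}(\I\CC_X)$. Applying the quasi-inverse $\lambda_X^\rmE$ from Proposition \ref{prop3.6}, we recover $K\simeq \lambda_X^\rmE I_X^\rmE K$ as an object of $\ZEC_{\RR\mbox{\scriptsize -}c}(\CC_X^\sub)$. This gives the inclusion at the level of objects. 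Fullness and faithfulness of the inclusion then follow from the fact that both $\ZEC_{\rm modi}(\CC_{X(H)}^\sub)$ and $\ZEC_{\RR\mbox{\scriptsize -}c}(\CC_X^\sub)$ are full subcategories of $\BEC(\CC_X^\sub)$, so there is nothing more to check on morphisms.

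There is essentially no obstacle here: the hard work has already been done in Propositions \ref{prop3.6} and \ref{prop3.14}, and the analogous corollary on the ind-sheaf side is exactly \cite[Prop.\:3.15]{Ito20}. The only thing one must be slightly careful about is the compatibility of the equivalences $I_X^\rmE$ and $\lambda_X^\rmE$ when restricted to the various full subcategories involved, but this is immediate from Proposition \ref{prop3.14}, which asserts precisely that $I_X^\rmE$ and $\lambda_X^\rmE$ restrict to quasi-inverse equivalences between $\ZEC_{\rm modi}(\CC_{X(H)}^\sub)$ and $\ZEC_{\rm modi}(\I\CC_{X(H)})$. Hence the corollary follows in one or two lines, exactly parallel to the proofs of the two preceding corollaries.
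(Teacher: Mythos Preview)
Your proposal is correct and follows exactly the paper's approach: the paper's proof is the single line ``This follows from Propositions \ref{prop3.6}, \ref{prop3.14} and \cite[Prop.\:3.15]{Ito20},'' which are precisely the three ingredients you invoke.
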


\begin{proof}
This follows from Propositions \ref{prop3.6}, \ref{prop3.14} and \cite[Prop.\:3.15]{Ito20}.
\end{proof}

\begin{definition}
We say that an enhanced subanalytic sheaf $K\in\ZEC(\I\CC_{X})$ is $\CC$-constructible if
there exists a complex analytic stratification $\{X_\alpha\}_\alpha$ of $X$
such that
$$\pi^{-1}(\rho_{\var{X}^{\blow}_\alpha\ast}\CC_{\var{X}^{\blow}_\alpha\setminus D_\alpha})
\otimes \bfE b_\alpha^{-1}K$$
has a modified quasi-normal form along $D_\alpha$ for any $\alpha$.
Here $b_\alpha \colon \var{X}^{\blow}_\alpha \to X$ is a complex blow-up of $\var{X_\alpha}$
along $\partial X_\alpha = \var{X_\alpha}\setminus X_\alpha$ and
$D_\alpha := b_\alpha^{-1}(\partial X_\alpha)$.
\end{definition}

We denote by $\ZEC_{\CC\mbox{\scriptsize -}c}(\CC_{X}^\sub)$
the category of $\CC$-constructible enhanced subanalytic sheaves
and denote by $\ZEC_{\CC\mbox{\scriptsize -}c}(\I\CC_{X})$
the category of $\CC$-constructible enhanced ind-sheaves.
Remark that $\ZEC_{\CC\mbox{\scriptsize -}c}(\I\CC_X)$ is subcategory of
$\ZEC_{\RR\mbox{\scriptsize -}c}(\I\CC_X)$.
See \cite[Prop.\:3.21]{Ito20} for the details.

\begin{proposition}\label{prop3.17}
The functors $I_X^\rmE, \lambda_X^\rmE$ induce an equivalence of categories:
\[\xymatrix@M=7pt@C=45pt{
\ZEC_{\CC\mbox{\scriptsize -}c}(\CC_X^\sub)\ar@<0.8ex>@{->}[r]^-{I_{X}^\rmE}_-\sim
&
\ZEC_{\CC\mbox{\scriptsize -}c}(\I\CC_X)
\ar@<0.8ex>@{->}[l]^-{\lambda_{X}^\rmE}.
}\]
\end{proposition}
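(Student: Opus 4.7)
The plan is to mimic exactly the pattern of Propositions \ref{prop3.8}, \ref{prop3.11}, and \ref{prop3.14}. Thanks to Proposition \ref{prop3.4}, the functors $I_X^\rmE$ and $\lambda_X^\rmE$ already provide an equivalence between $\ZEC(\CC_X^\sub)$ and $\ZEC_{\I{\RR\mbox{\scriptsize -}c}}(\I\CC_X)$, so all that remains is to verify that they restrict to functors between the two $\CC$-constructible subcategories. In other words, I would show
$$I_X^\rmE\bigl(\ZEC_{\CC\mbox{\scriptsize -}c}(\CC_X^\sub)\bigr)\subset \ZEC_{\CC\mbox{\scriptsize -}c}(\I\CC_X),\quad
\lambda_X^\rmE\bigl(\ZEC_{\CC\mbox{\scriptsize -}c}(\I\CC_X)\bigr)\subset \ZEC_{\CC\mbox{\scriptsize -}c}(\CC_X^\sub).$$

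For the first inclusion, I would take $K\in \ZEC_{\CC\mbox{\scriptsize -}c}(\CC_X^\sub)$ together with the complex analytic stratification $\{X_\alpha\}_\alpha$ provided by the definition, so that for each $\alpha$ the object $\pi^{-1}(\rho_{\var{X}^{\blow}_\alpha\ast}\CC_{\var{X}^{\blow}_\alpha\setminus D_\alpha})\otimes \bfE b_\alpha^{-1}K$ has a modified quasi-normal form along $D_\alpha$. I would then use the same stratification for $I_X^\rmE K$ and argue that
$$\pi^{-1}(\iota_{\var{X}^{\blow}_\alpha}\CC_{\var{X}^{\blow}_\alpha\setminus D_\alpha})\otimes \bfE b_\alpha^{-1}(I_X^\rmE K)
\simeq I_{\var{X}^{\blow}_\alpha}^\rmE\Bigl(\pi^{-1}(\rho_{\var{X}^{\blow}_\alpha\ast}\CC_{\var{X}^{\blow}_\alpha\setminus D_\alpha})\otimes \bfE b_\alpha^{-1}K\Bigr),$$
using the commutation of $I_{(\cdot)}^\rmE$ with inverse image, established in \cite[Prop.\:3.19 (2)(i)]{Ito21b}, and its commutation with the operation $\pi^{-1}(\cdot)\otimes(\cdot)$, established in \cite[Prop.\:3.7 (2)(iii),(vi), (4)(i)]{Ito21b}; the latter also requires the identification $I_{\var{X}^{\blow}_\alpha}(\rho_{\var{X}^{\blow}_\alpha\ast}\CC_{\var{X}^{\blow}_\alpha\setminus D_\alpha})\simeq \iota_{\var{X}^{\blow}_\alpha}\CC_{\var{X}^{\blow}_\alpha\setminus D_\alpha}$. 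Proposition \ref{prop3.14} then transfers the modified quasi-normal form condition from the subanalytic side to the ind-sheaf side, proving that $I_X^\rmE K\in \ZEC_{\CC\mbox{\scriptsize -}c}(\I\CC_X)$.

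The reverse direction is symmetric: starting from $K\in \ZEC_{\CC\mbox{\scriptsize -}c}(\I\CC_X)$ with its stratification $\{X_\alpha\}_\alpha$, the same stratification will work for $\lambda_X^\rmE K$, and the analogous isomorphism
$$\pi^{-1}(\rho_{\var{X}^{\blow}_\alpha\ast}\CC_{\var{X}^{\blow}_\alpha\setminus D_\alpha})\otimes \bfE b_\alpha^{-1}(\lambda_X^\rmE K)
\simeq \lambda_{\var{X}^{\blow}_\alpha}^\rmE\Bigl(\pi^{-1}(\iota_{\var{X}^{\blow}_\alpha}\CC_{\var{X}^{\blow}_\alpha\setminus D_\alpha})\otimes \bfE b_\alpha^{-1}K\Bigr)$$
follows from \cite[Prop.\:3.7 (3)(i), (4)(ii),(iv)]{Ito21b} and \cite[Prop.\:3.19 (4)(i)]{Ito21b}. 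Proposition \ref{prop3.14} again provides the transfer of the modified quasi-normal form property.

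The only genuine obstacle I anticipate is checking the commutation of $I_X^\rmE$ (resp.\ $\lambda_X^\rmE$) with the composite operation $F\mapsto \pi^{-1}(\cdots)\otimes \bfE b_\alpha^{-1} F$ applied on a blow-up; but since $b_\alpha$ is a morphism of complex manifolds (hence of real analytic bordered spaces) and the sheaves involved are of the simple form $\rho_{\ast}\CC_U$ or $\iota\CC_U$ for subanalytic open $U$, the references cited above handle this essentially formally, without requiring any new argument beyond what was used in Propositions \ref{prop3.8}, \ref{prop3.11}, \ref{prop3.14}.
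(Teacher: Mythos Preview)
Your proposal is correct and follows essentially the same approach as the paper's own proof: reduce via Proposition~\ref{prop3.4} to checking the two inclusions, then use the same stratification together with Proposition~\ref{prop3.14} and the commutation properties from \cite{Ito21b} to transfer the modified quasi-normal form condition across $I_X^\rmE$ and $\lambda_X^\rmE$. Your citations are in fact slightly more careful than the paper's, since you explicitly invoke \cite[Prop.\:3.19]{Ito21b} for the commutation with $\bfE b_\alpha^{-1}$, which the paper absorbs into its reference to Prop.\:3.7.
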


\begin{proof}
By Proposition \ref{prop3.4},
it is enough to prove that 
$I_X^\rmE(\,\ZEC_{\CC\mbox{\scriptsize -}c}(\CC_X^\sub)\,)
\subset \ZEC_{\CC\mbox{\scriptsize -}c}(\I\CC_X)$
and 
$\lambda_X^\rmE(\,\ZEC_{\CC\mbox{\scriptsize -}c}(\I\CC_X)\,)
\subset \ZEC_{\CC\mbox{\scriptsize -}c}(\CC_X^\sub)$.

Let $K\in\ZEC_{\CC\mbox{\scriptsize -}c}(\CC_X^\sub)$.
Then there exists a complex analytic stratification $\{X_\alpha\}_\alpha$ of $X$
such that for any $\alpha$
$$\pi^{-1}(\rho_{\var{X}^{\blow}_\alpha\ast}\CC_{\var{X}^{\blow}_\alpha\setminus D_\alpha})
\otimes \bfE b_\alpha^{-1}K$$
has a modified quasi-normal form along $D_\alpha$
and hence
$$I_{\var{X}^{\blow}_\alpha}^\rmE(
\pi^{-1}(\rho_{\var{X}^{\blow}_\alpha\ast}\CC_{\var{X}^{\blow}_\alpha\setminus D_\alpha})
\otimes \bfE b_\alpha^{-1}K)$$
has a modified quasi-normal form along $D_\alpha$ for any $\alpha$ by Proposition \ref{prop3.14}.
Moreover we have isomorphisms in $\BEC(\I\CC_{\var{X}^{\blow}_\alpha})$
$$I_{\var{X}^{\blow}_\alpha}^\rmE(
\pi^{-1}(\rho_{\var{X}^{\blow}_\alpha\ast}\CC_{\var{X}^{\blow}_\alpha\setminus D_\alpha})
\otimes \bfE b_\alpha^{-1}K)
\simeq
\pi^{-1}(\iota_{\var{X}^{\blow}_\alpha}\CC_{\var{X}^{\blow}_\alpha\setminus D_\alpha})
\otimes \bfE b_\alpha^{-1}(I_X^\rmE K)$$
by \cite[Prop.\:3.7 (2)(iii),(vi), (4)(i)]{Ito21b}.
Therefore, we have $I_X^\rmE K\in \ZEC_{\CC\mbox{\scriptsize -}c}(\I\CC_X)$.

Let $K\in\ZEC_{\CC\mbox{\scriptsize -}c}(\I\CC_X)$.
Then there exists a complex analytic stratification $\{X_\alpha\}_\alpha$ of $X$
such that for any $\alpha$
$$\pi^{-1}(\iota_{\var{X}^{\blow}_\alpha}\CC_{\var{X}^{\blow}_\alpha\setminus D_\alpha})
\otimes \bfE b_\alpha^{-1}K$$
has a modified quasi-normal form along $D_\alpha$
and hence
$$\lambda_{\var{X}^{\blow}_\alpha}^\rmE(
\pi^{-1}(\rho_{\var{X}^{\blow}_\alpha\ast}\CC_{\var{X}^{\blow}_\alpha\setminus D_\alpha})
\otimes \bfE b_\alpha^{-1}K)$$
has a modified quasi-normal form along $D_\alpha$ for any $\alpha$ by Proposition \ref{prop3.14}.
Moreover we have isomorphisms in $\BEC(\CC_{\var{X}^{\blow}_\alpha}^\sub)$
$$\lambda_{\var{X}^{\blow}_\alpha}^\rmE(
\pi^{-1}(\iota_{\var{X}^{\blow}_\alpha}\CC_{\var{X}^{\blow}_\alpha\setminus D_\alpha})
\otimes \bfE b_\alpha^{-1}K)
\simeq
\pi^{-1}(\rho_{\var{X}^{\blow}_\alpha\ast}\CC_{\var{X}^{\blow}_\alpha\setminus D_\alpha})
\otimes \bfE b_\alpha^{-1}(\lambda_X^\rmE K)$$
by \cite[Prop.\:3.7 (3)(i), (4)(ii),(iv)]{Ito21b}.
Therefore, we have $\lambda_X^\rmE K\in \ZEC_{\CC\mbox{\scriptsize -}c}(\CC_X^\sub)$.
\end{proof}

\begin{corollary}\label{cor3.18}
The category $\ZEC_{\CC\mbox{\scriptsize -}c}(\CC_X^\sub)$
is full subcategory of $\ZEC_{\RR\mbox{\scriptsize -}c}(\CC_X^\sub)$.
\end{corollary}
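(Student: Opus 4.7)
The plan is to deduce the corollary by transporting the analogous known fact for enhanced ind-sheaves through the equivalences established earlier in this section. More precisely, I would chain together Proposition \ref{prop3.17}, the ind-sheaf version of the statement (which is \cite[Prop.\:3.21]{Ito20}), and Proposition \ref{prop3.6}, exactly in the same spirit as the two preceding corollaries after Propositions \ref{prop3.11} and \ref{prop3.14}.

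In more detail, I would take an arbitrary object $K\in\ZEC_{\CC\mbox{\scriptsize -}c}(\CC_X^\sub)$. First, by Proposition \ref{prop3.17}, the functor $I_X^\rmE$ sends it into $\ZEC_{\CC\mbox{\scriptsize -}c}(\I\CC_X)$, so $I_X^\rmE K$ is a $\CC$-constructible enhanced ind-sheaf. Second, by the already established inclusion $\ZEC_{\CC\mbox{\scriptsize -}c}(\I\CC_X)\subset \ZEC_{\RR\mbox{\scriptsize -}c}(\I\CC_X)$ from \cite[Prop.\:3.21]{Ito20}, we obtain that $I_X^\rmE K$ is an $\RR$-constructible enhanced ind-sheaf. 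Third, by Proposition \ref{prop3.6}, the functor $\lambda_X^\rmE$ provides an equivalence between $\ZEC_{\RR\mbox{\scriptsize -}c}(\I\CC_X)$ and $\ZEC_{\RR\mbox{\scriptsize -}c}(\CC_X^\sub)$, and since $\lambda_X^\rmE\circ I_X^\rmE\simeq \id$, we conclude that $K\simeq \lambda_X^\rmE I_X^\rmE K\in \ZEC_{\RR\mbox{\scriptsize -}c}(\CC_X^\sub)$. Fullness of the subcategory is automatic because both sides sit inside the ambient category $\ZEC(\CC_X^\sub)$ and are defined by conditions on objects only.

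There is essentially no obstacle here: all the nontrivial content has been packaged into Propositions \ref{prop3.6} and \ref{prop3.17} (together with the ind-sheaf side result \cite[Prop.\:3.21]{Ito20}). The only thing to be mildly careful about is to invoke Proposition \ref{prop3.6} rather than Proposition \ref{prop3.4}, since we need the $\RR$-constructible version of the equivalence to land back in the subanalytic side with $\RR$-constructibility preserved. Accordingly, the whole proof can be written in a single line, in complete parallel with the two preceding corollaries of the section.
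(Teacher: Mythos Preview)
Your proposal is correct and takes essentially the same approach as the paper: the paper's proof is the single line ``This follows from Propositions \ref{prop3.6}, \ref{prop3.17} and \cite[Prop.\:3.21]{Ito20},'' which is exactly the chain of equivalences you spell out.
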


\begin{proof}
This follows from Propositions \ref{prop3.6}, \ref{prop3.17} and \cite[Prop.\:3.21]{Ito20}.
\end{proof}

We set
\[\BEC_{\CC\mbox{\scriptsize -}c}(\CC_{X}^\sub) := \{K\in\BEC(\I\CC_{X})\
|\ \SH^i(K)\in\ZEC_{\CC\mbox{\scriptsize -}c}(\CC_{X}^\sub) \mbox{ for any }i\in\ZZ \}\subset \BEC(\I\CC_{X}).\]

\begin{theorem}\label{main1}
The functors $I_X^\rmE, \lambda_X^\rmE$ induce an equivalence of triangulated categories:
\[\xymatrix@M=7pt@C=45pt{
\BEC_{\CC\mbox{\scriptsize -}c}(\CC_X^\sub)\ar@<0.8ex>@{->}[r]^-{I_X^\rmE}_-\sim
&
\BEC_{\CC\mbox{\scriptsize -}c}(\I\CC_X)
\ar@<0.8ex>@{->}[l]^-{\lambda_X^\rmE}.
}\]
\end{theorem}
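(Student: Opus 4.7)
The plan is to deduce the equivalence at the level of bounded derived categories from the equivalence on hearts (Proposition \ref{prop3.17}) together with the fact that $I_X^\rmE$ is t-exact (Proposition \ref{prop3.4}). Since the ambient equivalence $I_X^\rmE\colon \BEC_{\RR\mbox{\scriptsize -}c}(\CC_X^\sub)\simto \BEC_{\RR\mbox{\scriptsize -}c}(\I\CC_X)$ is already known (Theorem \ref{thm2.6}~(3)) and $\BEC_{\CC\mbox{\scriptsize -}c}(\CC_X^\sub)\subset \BEC_{\RR\mbox{\scriptsize -}c}(\CC_X^\sub)$ (Corollary \ref{cor3.18}), the whole task reduces to checking that this equivalence restricts to one between the $\CC$-constructible subcategories.

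First I would check that $I_X^\rmE$ sends $\BEC_{\CC\mbox{\scriptsize -}c}(\CC_X^\sub)$ to $\BEC_{\CC\mbox{\scriptsize -}c}(\I\CC_X)$. By Proposition \ref{prop3.4} the functor $I_X^\rmE$ is t-exact with respect to the standard t-structures, so there is a natural isomorphism $\SH^i(I_X^\rmE K)\simeq I_X^\rmE(\SH^i K)$ for every $i\in\ZZ$. If $K\in \BEC_{\CC\mbox{\scriptsize -}c}(\CC_X^\sub)$, then $\SH^i K\in \ZEC_{\CC\mbox{\scriptsize -}c}(\CC_X^\sub)$ by definition, and Proposition \ref{prop3.17} yields $I_X^\rmE(\SH^i K)\in \ZEC_{\CC\mbox{\scriptsize -}c}(\I\CC_X)$. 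Hence each cohomology of $I_X^\rmE K$ is $\CC$-constructible, giving $I_X^\rmE K\in \BEC_{\CC\mbox{\scriptsize -}c}(\I\CC_X)$. Symmetrically, restricted to $\BEC_{\I\RR\mbox{\scriptsize -}c}(\I\CC_X)$ the functor $\lambda_X^\rmE$ is the inverse of the t-exact equivalence of Theorem \ref{thm2.6}~(2), hence itself t-exact; the same argument with Proposition \ref{prop3.17} read the other way shows $\lambda_X^\rmE$ sends $\BEC_{\CC\mbox{\scriptsize -}c}(\I\CC_X)$ into $\BEC_{\CC\mbox{\scriptsize -}c}(\CC_X^\sub)$.

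Once the two functors are shown to preserve $\CC$-constructibility, the fact that they are quasi-inverse is immediate from Theorem \ref{thm2.6}~(3): the adjunction isomorphisms $\id\simto \lambda_X^\rmE\circ I_X^\rmE$ and $I_X^\rmE\circ\lambda_X^\rmE\simto \id$ on $\BEC_{\RR\mbox{\scriptsize -}c}$ restrict, and the triangulated structure is preserved because both sides inherit it from the ambient categories and the restrictions of a triangulated functor stay triangulated. To see that $\BEC_{\CC\mbox{\scriptsize -}c}(\CC_X^\sub)$ really is a full triangulated subcategory (so that the statement makes sense), one uses the long exact cohomology sequence together with the fact that $\ZEC_{\CC\mbox{\scriptsize -}c}(\CC_X^\sub)$ is closed under kernels, cokernels and extensions inside $\ZEC(\CC_X^\sub)$; this last stability can be transported via Proposition \ref{prop3.17} from the known stability of $\ZEC_{\CC\mbox{\scriptsize -}c}(\I\CC_X)$ (\cite[Prop.\,3.21]{Ito20}).

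The argument is essentially formal once the heart-level equivalence (Proposition \ref{prop3.17}) and t-exactness (Proposition \ref{prop3.4}) are in hand, so there is no truly hard step; the only point that requires a little care is bookkeeping on t-structures, namely making sure that for $K$ a priori only lying in $\BEC(\I\CC_X)$ with cohomologies in $\ZEC_{\CC\mbox{\scriptsize -}c}(\CC_X^\sub)$, the cohomologies themselves automatically land in $\ZEC_{\I\RR\mbox{\scriptsize -}c}(\I\CC_X)$, so that $\lambda_X^\rmE$ applies degreewise and commutes with $\SH^i$. This follows because $\ZEC_{\CC\mbox{\scriptsize -}c}(\CC_X^\sub)\subset \ZEC_{\RR\mbox{\scriptsize -}c}(\CC_X^\sub)$ corresponds under $I_X^\rmE$ to a subcategory of $\ZEC_{\I\RR\mbox{\scriptsize -}c}(\I\CC_X)$, which is exactly the heart of the induced t-structure where $\lambda_X^\rmE$ is exact.
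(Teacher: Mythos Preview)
Your proposal is correct and follows essentially the same approach as the paper: the paper's proof simply reads ``This follows from Theorem \ref{thm2.6} and Proposition \ref{prop3.17},'' and your argument is a careful unpacking of exactly why those two results suffice, via the t-exactness of $I_X^\rmE$ (Proposition \ref{prop3.4}) to pass from hearts to bounded complexes. The extra bookkeeping you supply about $\lambda_X^\rmE$ commuting with $\SH^i$ and the triangulated-subcategory structure is implicit in the paper's citation but spelled out correctly in your version.
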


\begin{proof}
This follows from Theorem \ref{thm2.6} and Proposition \ref{prop3.17}.
\end{proof}

\begin{corollary}
The category $\BEC_{\CC\mbox{\scriptsize -}c}(\CC_X^\sub)$
is full triangulated subcategory of $\BEC_{\RR\mbox{\scriptsize -}c}(\CC_X^\sub)$.
\end{corollary}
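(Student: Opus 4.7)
The plan is to reduce this to the corresponding full-triangulated inclusion on the ind-sheaf side, namely $\BEC_{\CC\mbox{\scriptsize -}c}(\I\CC_X)\subset \BEC_{\RR\mbox{\scriptsize -}c}(\I\CC_X)$ (known by \cite[Prop.\:3.21]{Ito20}, as recalled after Definition \ref{def-const}), and to transport it across the two equivalences of categories that have just been established: Theorem \ref{main1}, which gives $I_X^\rmE\colon \BEC_{\CC\mbox{\scriptsize -}c}(\CC_X^\sub)\simto \BEC_{\CC\mbox{\scriptsize -}c}(\I\CC_X)$, and Theorem \ref{thm2.6}(3), which gives $I_X^\rmE\colon \BEC_{\RR\mbox{\scriptsize -}c}(\CC_X^\sub)\simto \BEC_{\RR\mbox{\scriptsize -}c}(\I\CC_X)$ with inverse $\lambda_X^\rmE$.

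First I would verify the containment at the level of objects. For $K\in \BEC_{\CC\mbox{\scriptsize -}c}(\CC_X^\sub)$, the defining condition says that each cohomology $\SH^i(K)$ lies in $\ZEC_{\CC\mbox{\scriptsize -}c}(\CC_X^\sub)$. Corollary \ref{cor3.18} immediately upgrades this to $\SH^i(K)\in \ZEC_{\RR\mbox{\scriptsize -}c}(\CC_X^\sub)$ for every $i$. Because the pair $(\bfE_{\RR\mbox{\scriptsize -}c}^{\leq 0}(\CC_X^\sub),\bfE_{\RR\mbox{\scriptsize -}c}^{\geq 0}(\CC_X^\sub))$ is a bounded t-structure on $\BEC_{\RR\mbox{\scriptsize -}c}(\CC_X^\sub)$ with heart $\ZEC_{\RR\mbox{\scriptsize -}c}(\CC_X^\sub)$ (by the lemma preceding Proposition \ref{prop3.6}), $K$ is recovered from its shifted cohomologies by a finite sequence of distinguished triangles and therefore belongs to $\BEC_{\RR\mbox{\scriptsize -}c}(\CC_X^\sub)$.

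Fullness and compatibility with triangles are then obtained by transport. For $K_1,K_2\in \BEC_{\CC\mbox{\scriptsize -}c}(\CC_X^\sub)$ one has the chain of identifications
\[
\Hom_{\BEC_{\CC\mbox{\scriptsize -}c}(\CC_X^\sub)}(K_1,K_2)
\ \cong\ \Hom_{\BEC_{\CC\mbox{\scriptsize -}c}(\I\CC_X)}(I_X^\rmE K_1,I_X^\rmE K_2)
\ \cong\ \Hom_{\BEC_{\RR\mbox{\scriptsize -}c}(\I\CC_X)}(I_X^\rmE K_1,I_X^\rmE K_2)
\ \cong\ \Hom_{\BEC_{\RR\mbox{\scriptsize -}c}(\CC_X^\sub)}(K_1,K_2),
\]
where the first and third isomorphisms come from Theorems \ref{main1} and \ref{thm2.6}(3), and the middle one is the fullness of $\BEC_{\CC\mbox{\scriptsize -}c}(\I\CC_X)$ inside $\BEC_{\RR\mbox{\scriptsize -}c}(\I\CC_X)$. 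That distinguished triangles are preserved follows because $I_X^\rmE$ and $\lambda_X^\rmE$ are triangulated equivalences.

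I do not anticipate any substantive obstacle: the only point requiring care is that the two instances of $I_X^\rmE$ (the one restricted to $\CC$-constructible objects and the one restricted to $\RR$-constructible objects) genuinely agree on the intersection, but this is immediate from functoriality, since both are restrictions of the single functor $I_X^\rmE\colon \BEC(\CC_X^\sub)\to \BEC(\I\CC_X)$ introduced in Subsection \ref{2.11}.
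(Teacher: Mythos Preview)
Your argument is correct and rests on the same key input as the paper's one-line proof, namely Corollary \ref{cor3.18}. The only remark is that your second paragraph is superfluous: both $\BEC_{\CC\mbox{\scriptsize -}c}(\CC_X^\sub)$ and $\BEC_{\RR\mbox{\scriptsize -}c}(\CC_X^\sub)$ are, by definition, \emph{full} subcategories of the same ambient category $\BEC(\CC_X^\sub)$ with the inherited triangulated structure, so once the object-level inclusion is established (your first paragraph), fullness and compatibility with distinguished triangles are automatic and no transport through $I_X^\rmE$ is needed.
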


\begin{proof}
This follows from Corollary \ref{cor3.18}.
\end{proof}

\begin{proposition}\label{prop3.21}
The functor $e_X^\sub\circ \rho_{X\ast}$ induces an embedding:
$$e_X^\sub\circ \rho_{X\ast}\colon \BDC_{\CC-c}(\CC_X)\hookrightarrow\BEC_{\CC-c}(\CC_X^\sub).$$
\end{proposition}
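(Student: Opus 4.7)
The plan is to reduce the statement to the known embedding $e_X \circ \iota_X \colon \BDC_{\CC-c}(\CC_X) \hookrightarrow \BEC_{\CC-c}(\I\CC_X)$ on the enhanced ind-sheaf side (see \cite{Ito20, DK16}), and then transport it through the equivalence $\lambda_X^\rmE$ of Theorem \ref{main1}.

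First I would verify that $e_X^\sub(\rho_{X\ast} F) \in \BEC_{\CC-c}(\CC_X^\sub)$ for every $F \in \BDC_{\CC-c}(\CC_X)$. The commutative diagram at the end of Subsection \ref{2.11} gives $I_X^\rmE \circ e_X^\sub \simeq e_X \circ I_X$, so
\[
I_X^\rmE\bigl(e_X^\sub(\rho_{X\ast} F)\bigr) \simeq e_X\bigl(I_X(\rho_{X\ast} F)\bigr).
\]
Since $F$ is in particular $\RR$-constructible, $\iota_X F$ lies in $\BDC_{\I\RR-c}(\I\CC_X)$, and the equivalences of Theorems \ref{thm2.26} and \ref{thm2.2} identify $\iota_X F$ with $\rho_{X\ast} F$ via the inverse pair $(I_X, \bfR J_X)$; hence $I_X(\rho_{X\ast} F) \simeq \iota_X F$. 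The right-hand side above then becomes $e_X(\iota_X F)$, which belongs to $\BEC_{\CC-c}(\I\CC_X)$ by the classical embedding of $\CC$-constructible sheaves into enhanced ind-sheaves. Applying $\lambda_X^\rmE$ and invoking Theorem \ref{main1} then places $e_X^\sub(\rho_{X\ast} F)$ in $\BEC_{\CC-c}(\CC_X^\sub)$, as required.

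For the full faithfulness of $e_X^\sub \circ \rho_{X\ast}$ on $\BDC_{\CC-c}(\CC_X)$, I would use the same isomorphism $I_X^\rmE \circ (e_X^\sub \circ \rho_{X\ast}) \simeq e_X \circ \iota_X$: since $I_X^\rmE$ restricts to an equivalence on the $\CC$-constructible categories by Theorem \ref{main1}, the full faithfulness on the left is inherited from the known full faithfulness of $e_X \circ \iota_X$. Alternatively, one can argue directly by composing the full faithfulness of $\rho_{X\ast}$ (which follows from the adjunction identity $\rho_X^{-1} \circ \bfR \rho_{X\ast} \simeq \id$) with the embedding $e_X^\sub$ recalled in Subsection \ref{subsec2.10}.

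The main obstacle is the derived-level identification $I_X(\rho_{X\ast} F) \simeq \iota_X F$ for $F \in \BDC_{\RR-c}(\CC_X)$: although this is essentially the content of the equivalence $(I_X, \bfR J_X)$ of Theorem \ref{thm2.26}, one must keep track of the exactness of $\rho_{X\ast}$ on the $\RR$-constructible subcategory so that the underived relation $J_X \circ \iota_X \simeq \rho_{X\ast}$ lifts cleanly to the derived setting. Once this compatibility is in place, the remainder of the argument is a direct transport through Theorem \ref{main1}.
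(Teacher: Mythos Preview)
Your proposal is correct and follows essentially the same route as the paper: the paper's one-line proof cites Theorem \ref{main1}, \cite[Props.\:3.7 (3)(i), (4)(i), 3.24]{Ito21b} and \cite[Prop.\:3.1]{Ito23}, which amount precisely to the compatibility $I_X^\rmE\circ e_X^\sub\circ\rho_{X\ast}\simeq e_X\circ\iota_X$ together with the known embedding $e_X\circ\iota_X\colon \BDC_{\CC\mbox{\scriptsize -}c}(\CC_X)\hookrightarrow\BEC_{\CC\mbox{\scriptsize -}c}(\I\CC_X)$, transported back via the equivalence of Theorem \ref{main1}. The identification $I_X\circ\rho_{X\ast}\simeq\iota_X$ on $\RR$-constructible objects that you flag as the main obstacle is exactly what the cited items from \cite{Ito21b} supply.
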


\begin{proof}
This follows from Theorem \ref{main1}, \cite[Props.\:3.7 (3)(i), (4)(i), 3.24]{Ito21b} and \cite[Prop.\:3.1]{Ito23}.
\end{proof}

\begin{theorem}\label{main2}
%Let $X$ be a complex manifold.
There exists an equivalence of triangulated categories:
\[ \Sol_{X}^{\rmE, \sub} \colon \BDChol(\D_{X})^{\op}
\simto\BEC_{\CC\mbox{\scriptsize -}c}(\CC_{X}^\sub).\]
\end{theorem}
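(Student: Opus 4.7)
The plan is to deduce this from the equivalence for enhanced ind-sheaves (Theorem \ref{thm-Ito20}) by transporting it across the equivalence of Theorem \ref{main1}, using the commutative diagram of Theorem \ref{thm1.8} to identify the resulting composition with $\Sol_X^{\rmE,\sub}$.

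First, I would observe that the commutative diagram in Theorem \ref{thm1.8} yields, for every $\M\in\BDChol(\D_X)$, a canonical isomorphism
\[
I_X^\rmE\,\Sol_X^{\rmE,\sub}(\M)\simeq \Sol_X^\rmE(\M).
\]
By Theorem \ref{thm-Ito20}, $\Sol_X^\rmE(\M)$ lies in $\BEC_{\CC\mbox{\scriptsize -}c}(\I\CC_X)$, so applying $\lambda_X^\rmE$ (which is well defined on $\Sol_X^{\rmE,\sub}(\M)\in\BEC_{\RR\mbox{\scriptsize -}c}(\CC_X^\sub)\subset\BEC(\CC_X^\sub)$ by Theorem \ref{thm2.6}) together with the canonical isomorphism $\id\simeq\lambda_X^\rmE\circ I_X^\rmE$ gives
\[
\Sol_X^{\rmE,\sub}(\M)\simeq \lambda_X^\rmE\,\Sol_X^\rmE(\M).
\]

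Second, I would use Theorem \ref{main1} (specifically Proposition \ref{prop3.17}) to conclude that $\lambda_X^\rmE$ restricts to an equivalence
$\BEC_{\CC\mbox{\scriptsize -}c}(\I\CC_X)\simto \BEC_{\CC\mbox{\scriptsize -}c}(\CC_X^\sub)$. Combined with the isomorphism above, this implies that the essential image of $\Sol_X^{\rmE,\sub}$ is contained in $\BEC_{\CC\mbox{\scriptsize -}c}(\CC_X^\sub)$, and in fact
\[
\Sol_X^{\rmE,\sub}\simeq \lambda_X^\rmE\circ\Sol_X^\rmE
\]
as functors $\BDChol(\D_X)^\op\to\BEC_{\CC\mbox{\scriptsize -}c}(\CC_X^\sub)$. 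Since both factors on the right are equivalences of triangulated categories (the first by Theorem \ref{main1}, the second by Theorem \ref{thm-Ito20}), the composition $\Sol_X^{\rmE,\sub}$ is an equivalence as well.

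There is no real obstacle: the content of this theorem is packaged entirely in the two previously established equivalences (Theorems \ref{thm-Ito20} and \ref{main1}) together with the compatibility expressed by the commutative diagram of Theorem \ref{thm1.8}. The only thing to be slightly careful about is to verify that $\Sol_X^{\rmE,\sub}(\M)$ does indeed land in $\BEC_{\CC\mbox{\scriptsize -}c}(\CC_X^\sub)$, rather than merely in $\BEC_{\RR\mbox{\scriptsize -}c}(\CC_X^\sub)$, but this is immediate from Proposition \ref{prop3.17} once the isomorphism $I_X^\rmE\,\Sol_X^{\rmE,\sub}\simeq\Sol_X^\rmE$ is in hand.
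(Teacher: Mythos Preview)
Your proposal is correct and follows essentially the same approach as the paper: the paper's proof simply cites Theorems \ref{thm1.8}, \ref{thm-Ito20}, and \ref{main1}, which is exactly the combination you unpack in detail. Your argument is a faithful expansion of that one-line reference.
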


\begin{proof}
This follows from Theorems \ref{thm1.8}, \ref{thm-Ito20} and \ref{main1}.
\end{proof}

\begin{corollary}
We have a commutative diagram:
\[\xymatrix@C=30pt@M=5pt{
\BDChol(\D_X)^{\op}\ar@{->}[r]^\sim\ar@<1.0ex>@{}[r]^-{\Sol_X^{\rmE,\sub}}
\ar@{}[rd]|{\rotatebox[origin=c]{180}{$\circlearrowright$}}
 & \BEC_{\CC-c}(\CC_X^\sub)\\
\BDCrh(\D_X)^{\op}\ar@{->}[r]_-{\Sol_X}^-{\sim}\ar@{}[u]|-{\bigcup}
&\BDC_{\CC-c}(\CC_X).
\ar@{^{(}->}[u]_-{e_X^\sub\circ \rho_{X\ast}}
}\]
\end{corollary}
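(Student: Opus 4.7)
The plan is to chase the square on a regular holonomic $\M \in \BDCrh(\D_X)^{\op}$: we must produce a canonical isomorphism $\Sol_X^{\rmE,\sub}(\M) \simeq e_X^\sub\circ \rho_{X\ast}\Sol_X(\M)$ in $\BEC_{\CC\mbox{\scriptsize -}c}(\CC_X^\sub)$. The overall strategy is to lift both sides to the enhanced ind-sheaf world via $I_X^\rmE$, identify them there using the classical regular comparison, and then descend via the quasi-inverse $\lambda_X^\rmE$ supplied by Theorem \ref{thm2.6}.

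First I would recall the well-known comparison for regular holonomic $\D$-modules due to \cite{DK16}: there is a canonical isomorphism $\Sol_X^\rmE(\M)\simeq e_X(\iota_X\Sol_X(\M))$ in $\BEC(\I\CC_X)$. Next, from the commutativity of the diagram in Theorem \ref{thm1.8} one reads $I_X^\rmE\circ \Sol_X^{\rmE,\sub}\simeq \Sol_X^\rmE$ on $\BDChol(\D_X)^{\op}$; applying the quasi-inverse $\lambda_X^\rmE$ (which is legitimate because $\Sol_X^\rmE(\M)$ is $\I\RR$-constructible by the results of \S\ref{2.11}) gives $\Sol_X^{\rmE,\sub}(\M)\simeq \lambda_X^\rmE\Sol_X^\rmE(\M)$. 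Combining these two identities yields
\[
\Sol_X^{\rmE,\sub}(\M)\ \simeq\ \lambda_X^\rmE\bigl(e_X(\iota_X\Sol_X(\M))\bigr).
\]

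To finish I would rewrite the right-hand side as $e_X^\sub\rho_{X\ast}\Sol_X(\M)$ in two steps. The second commutative square displayed at the end of \S\ref{2.11} provides $\lambda_X^\rmE\circ e_X\simeq e_X^\sub\circ \lambda_X$ on $\BDC_{\I\RR\mbox{\scriptsize -}c}(\I\CC_X)$, reducing the expression to $e_X^\sub(\lambda_X(\iota_X\Sol_X(\M)))$. Finally, the elementary identity $\lambda_X\circ \iota_X\simeq \rho_{X\ast}$ on $\BDC_{\RR\mbox{\scriptsize -}c}(\CC_X)$ (which is nothing but the translation via Theorem \ref{thm2.26} of the equality $I_X\circ \rho_{X\ast}\simeq \iota_X$ on $\RR$-constructible sheaves) completes the chain, since $\Sol_X(\M)$ is $\CC$-constructible and hence $\RR$-constructible.

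The main obstacle is not the logical chain but ensuring the necessary subcategory hypotheses for each step: one must verify that $\iota_X\Sol_X(\M)$ lies in $\BDC_{\I\RR\mbox{\scriptsize -}c}(\I\CC_X)$ so that $\lambda_X\iota_X\simeq \rho_{X\ast}$ applies, and that $e_X(\iota_X\Sol_X(\M))$ lies in $\BEC_{\I\RR\mbox{\scriptsize -}c}(\I\CC_X)$ so that the square from \S\ref{2.11} applies. Both hold automatically from $\CC$-constructibility of $\Sol_X(\M)$ together with the observation that $e_X$ preserves $\I\RR$-constructibility (cf.\ the commutative diagrams of \S\ref{2.11}), so the bookkeeping is routine given the framework already assembled.
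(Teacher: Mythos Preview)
Your proposal is correct and follows essentially the same route as the paper's own proof. The paper cites Proposition \ref{prop3.21}, \cite[Prop.\:3.24, Lem.\:3.40 (3)]{Ito21b}, and the identity $\Sol_X^\rmE(\M)\simeq e_X(\iota_X\Sol_X(\M))$ for regular holonomic $\M$; your chain $\Sol_X^{\rmE,\sub}(\M)\simeq \lambda_X^\rmE\Sol_X^\rmE(\M)\simeq \lambda_X^\rmE e_X\iota_X\Sol_X(\M)\simeq e_X^\sub\lambda_X\iota_X\Sol_X(\M)\simeq e_X^\sub\rho_{X\ast}\Sol_X(\M)$ unpacks exactly these citations (the paper's use of $J_X^\rmE$ via Lem.\:3.40(3) coincides with your $\lambda_X^\rmE$ on the $\RR$-constructible range, and \cite[Prop.\:3.24]{Ito21b} is the commutative square you invoke from \S\ref{2.11}).
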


\begin{proof}
This follows from Proposition \ref{prop3.21},
\cite[Prop.\:3.24, Lem.\:3.40 (3)]{Ito21b} and 
the fact that $\Sol_X^\rmE(\M)\simeq e_X(\iota_X(\Sol_X(\M))$ for any $\M\in \BDCrh(\D_X)$.
See \cite[the equation just before Thm.\:9.1.2, Prop.\:9.1.3]{DK16}
(see also \cite[Last part of Prop.\:3.14]{Ito21}) for the details of the fact.
\end{proof}

One can summarize Theorem \ref{main2} and the results of \cite{Ito21b} in the following commutative diagram:
\[\xymatrix@M=5pt@R=45pt@C=45pt{
{}&{}&{}&\BDC(\CC_{X\times\RR_\infty}^\sub) & {}\\
\BDChol(\D_X)
\ar@{->}[r]_-{\Sol_X^{\rmE, \sub}}
  \ar@<-0.3ex>@{}[r]^-\sim
\ar@{^{(}->}[rrru]^-{\Sol_X^{\T, \sub}(\cdot)[1]}
\ar@{->}[rd]_-{\Sol_X^{\rmE}}
  \ar@<-1.0ex>@{}[rd]^-{\rotatebox{-25}{$\sim$}}
 & \BEC_{\CC\mbox{\scriptsize -}c}(\CC_X^\sub)
 \ar@{}[r]|-{\text{\large $\subset$}}
  \ar@<-1.0ex>@{->}[d]_-{I_X^\rmE}
 \ar@{}[d]|-\wr
 & \BEC_{\RR\mbox{\scriptsize -}c}(\CC_X^\sub)
 \ar@{}[r]|-{\text{\large $\subset$}}
 \ar@<-1.0ex>@{->}[d]_-{I_X^\rmE}
 \ar@{}[d]|-\wr
  & \BEC(\CC_X^\sub)
  \ar@{^{(}->}[u]_-{\bfR_X^{\rmE, \sub}}
  \ar@<-1.0ex>@{^{(}->}[rd]_-{I_X^\rmE}
   \ar@<-1.0ex>@{->}[d]_-{I_X^\rmE}
   \ar@{}[d]|-\wr\\
{}&\BEC_{\CC\mbox{\scriptsize -}c}(\I\CC_X)
\ar@{}[r]|-{\text{\large $\subset$}}
\ar@<-1.0ex>@{->}[u]_-{J_X^\rmE}
&\BEC_{\RR\mbox{\scriptsize -}c}(\I\CC_X)
\ar@{}[r]|-{\text{\large $\subset$}}
\ar@<-1.0ex>@{->}[u]_-{J_X^\rmE}
&\BEC_{\I\RR\mbox{\scriptsize -}c}(\I\CC_X)
\ar@{}[r]|-{\text{\large $\subset$}}
\ar@<-1.0ex>@{->}[u]_-{J_X^\rmE}
& \BEC(\I\CC_X).
\ar@<-1.0ex>@{->}[lu]_-{J_X^\rmE}
}\]
\bigskip

\subsection{Algebraic $\CC$-Constructible Enhanced Subanalytic Sheaves}
Throughout of this subsection, let $X$ be a smooth algebraic variety over $\CC$ and
denote by $X^\an$ the underlying complex manifold of $X$.

\begin{definition}
We say that an enhanced subanalytic sheaf $K\in\ZEC(\CC_{X^\an}^\sub)$ satisfies the condition 
$\ACsub$
if there exists an algebraic stratification $\{X_\alpha\}_\alpha$ of $X$ such that
$$\pi^{-1}(\rho_{(\var{X}^{\blow}_\alpha)^\an\ast}\CC_{(\var{X}^{\blow}_\alpha)^\an \setminus D^\an_\alpha})
\otimes \bfE (b^\an_\alpha)^{-1}K$$
has a modified quasi-normal form along $D^\an_\alpha$ for any $\alpha$,
Here $b_\alpha \colon \var{X}^\blow_\alpha \to X$ is a blow-up of $\var{X_\alpha}$
along $\partial X_\alpha := \var{X_\alpha}\setminus X_\alpha$,
$D_\alpha := b_\alpha^{-1}(\partial X_\alpha)$
and $D_\alpha^\an := \big(\var{X}_\alpha^\blow\big)^\an\setminus
\big(\var{X}_\alpha^\blow\setminus D_\alpha\big)^\an$.
\end{definition}

We denote by $\ZEC_{\CC\mbox{\scriptsize -}c}(\CC_X^\sub)$
the full subcategory of $\ZEC(\I\CC_{X^\an})$
whose objects satisfy the condition $\ACsub$
and 
denote by $\BEC_{\CC\mbox{\scriptsize -}c}(\CC_{X}^\sub)$
the full triangulated subcategory of $\BEC(\CC_{X}^\sub)$
consisting of objects whose cohomologies are contained in $\ZEC_{\CC\mbox{\scriptsize -}c}(\CC_{X}^\sub)$.
Remark that $\ZEC_{\CC\mbox{\scriptsize -}c}(\I\CC_X)$ is subcategory of
$\ZEC_{\RR\mbox{\scriptsize -}c}(\I\CC_{X^\an})$.
%See \cite[Prop.\:3.21]{Ito21} for the details.

\begin{proposition}\label{prop3.23}
The functors $I_{X^\an}^\rmE, \lambda_{X^\an}^\rmE$ induce an equivalence of categories:
\[\xymatrix@M=7pt@C=45pt{
\ZEC_{\CC\mbox{\scriptsize -}c}(\CC_X^\sub)\ar@<0.8ex>@{->}[r]^-{I_{X^\an}^\rmE}_-\sim
&
\ZEC_{\CC\mbox{\scriptsize -}c}(\I\CC_X)
\ar@<0.8ex>@{->}[l]^-{\lambda_{X^\an}^\rmE}.
}\]
Hence they induce an equivalence of triangulated categories:
\[\xymatrix@M=7pt@C=45pt{
\BEC_{\CC\mbox{\scriptsize -}c}(\CC_X^\sub)\ar@<0.8ex>@{->}[r]^-{I_{X^\an}^\rmE}_-\sim
&
\BEC_{\CC\mbox{\scriptsize -}c}(\I\CC_X)
\ar@<0.8ex>@{->}[l]^-{\lambda_{X^\an}^\rmE}.
}\]
\end{proposition}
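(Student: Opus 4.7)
The plan is to mimic the argument for Proposition \ref{prop3.17} verbatim, replacing the complex analytic stratification by an algebraic one. By Proposition \ref{prop3.4}, the functors $I_{X^\an}^\rmE$ and $\lambda_{X^\an}^\rmE$ are already mutually quasi-inverse equivalences between the ambient abelian categories $\ZEC(\CC_{X^\an}^\sub)$ and $\ZEC_{\I\RR\mbox{\scriptsize -}c}(\I\CC_{X^\an})$. Hence it is enough to verify the two set-theoretic inclusions
\[I_{X^\an}^\rmE\bigl(\ZEC_{\CC\mbox{\scriptsize -}c}(\CC_X^\sub)\bigr) \subset \ZEC_{\CC\mbox{\scriptsize -}c}(\I\CC_X), \qquad \lambda_{X^\an}^\rmE\bigl(\ZEC_{\CC\mbox{\scriptsize -}c}(\I\CC_X)\bigr) \subset \ZEC_{\CC\mbox{\scriptsize -}c}(\CC_X^\sub).\]

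For the first inclusion, let $K \in \ZEC_{\CC\mbox{\scriptsize -}c}(\CC_X^\sub)$ and choose an algebraic stratification $\{X_\alpha\}_\alpha$ of $X$ together with the data $b_\alpha\colon \var{X}^{\blow}_\alpha\to X$ and $D_\alpha$ witnessing the condition $\ACsub$. I will keep the same algebraic stratification and the same blow-ups to witness the analogous condition $\AC$ for $I_{X^\an}^\rmE K$. Combining \cite[Prop.\:3.7 (2)(iii),(vi), (4)(i)]{Ito21b} (commutation of $I_{M_\infty}^\rmE$ with $\pi^{-1}$, with tensor product, and with $\rho_{(\cdot)\ast}$ on locally closed subsets) and \cite[Prop.\:3.19 (2)(i)]{Ito21b} (commutation of $I^\rmE$ with $\bfE f^{-1}$), I obtain an isomorphism in $\BEC(\I\CC_{(\var{X}^{\blow}_\alpha)^\an})$
\[I_{(\var{X}^{\blow}_\alpha)^\an}^\rmE\!\left(\pi^{-1}\bigl(\rho_{(\var{X}^{\blow}_\alpha)^\an\ast}\CC_{(\var{X}^{\blow}_\alpha)^\an \setminus D^\an_\alpha}\bigr) \otimes \bfE (b^\an_\alpha)^{-1}K\right) \simeq \pi^{-1}\bigl(\iota_{(\var{X}^{\blow}_\alpha)^\an}\CC_{(\var{X}^{\blow}_\alpha)^\an \setminus D^\an_\alpha}\bigr) \otimes \bfE (b^\an_\alpha)^{-1}(I_{X^\an}^\rmE K).\]
Since the left-hand side has a modified quasi-normal form along $D^\an_\alpha$ by hypothesis, Proposition \ref{prop3.14} (applied on the complex manifold $(\var{X}^{\blow}_\alpha)^\an$ with the hypersurface $D^\an_\alpha$) implies that the right-hand side does as well. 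This holds for every $\alpha$, so $I_{X^\an}^\rmE K \in \ZEC_{\CC\mbox{\scriptsize -}c}(\I\CC_X)$. The argument for $\lambda_{X^\an}^\rmE$ is symmetric, using \cite[Prop.\:3.7 (3)(i), (4)(ii),(iv)]{Ito21b} and \cite[Prop.\:3.19 (4)(i)]{Ito21b} in place of the corresponding statements for $I$, together with Proposition \ref{prop3.14} in the reverse direction.

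For the triangulated enhancement, the equivalence of the hearts together with the t-exactness of $I_{X^\an}^\rmE$ from Proposition \ref{prop3.4} and the standard criterion \cite[Prop.\:10.1.6]{KS90} upgrades the abelian equivalence to an equivalence of the full triangulated subcategories $\BEC_{\CC\mbox{\scriptsize -}c}(\CC_X^\sub)$ and $\BEC_{\CC\mbox{\scriptsize -}c}(\I\CC_X)$, once one checks that these are defined as the full triangulated subcategories cut out by the cohomological condition with respect to the standard t-structure, which is immediate from the definitions of the paper. I do not expect a significant obstacle: the whole proof is a bookkeeping exercise on the same template as Proposition \ref{prop3.17}, and the only place where care is needed is to verify that the algebraic data $(\{X_\alpha\}, b_\alpha, D_\alpha)$ transport unchanged from the subanalytic side to the ind-sheaf side, which follows because $I_{X^\an}^\rmE$ and $\lambda_{X^\an}^\rmE$ are manifestly local and compatible with the six operations needed.
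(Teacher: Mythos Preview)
Your proposal is correct and follows essentially the same approach as the paper: reduce via Proposition \ref{prop3.4} (and Theorem \ref{thm2.6} for the triangulated part) to the two set-theoretic inclusions, then transport the algebraic stratification and blow-up data unchanged across $I_{X^\an}^\rmE$ and $\lambda_{X^\an}^\rmE$ using the commutation identities from \cite{Ito21b} together with Proposition \ref{prop3.14}. The only cosmetic difference is that the paper bundles the commutation with $\bfE(b_\alpha^\an)^{-1}$ into the citation of \cite[Prop.\:3.7]{Ito21b} rather than invoking \cite[Prop.\:3.19]{Ito21b} separately, and your phrasing ``the left-hand side has a modified quasi-normal form by hypothesis'' should more precisely read ``the object inside $I^\rmE$ has a modified quasi-normal form by hypothesis, hence so does the left-hand side by Proposition \ref{prop3.14}''.
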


\begin{proof}
By Theorem \ref{thm2.6} and Proposition \ref{prop3.4},
it is enough to show that 
$I_{X^\an}^\rmE(\,\ZEC_{\CC\mbox{\scriptsize -}c}(\CC_X^\sub)\,)
\subset \ZEC_{\CC\mbox{\scriptsize -}c}(\I\CC_X)$
and 
$\lambda_{X^\an}^\rmE(\,\ZEC_{\CC\mbox{\scriptsize -}c}(\I\CC_X)\,)
\subset \ZEC_{\CC\mbox{\scriptsize -}c}(\CC_X^\sub)$.

Let $K\in\ZEC_{\CC\mbox{\scriptsize -}c}(\CC_X^\sub)$.
Then there exists an algebraic stratification $\{X_\alpha\}_\alpha$ of $X$
such that  for any $\alpha$
$$\pi^{-1}(\rho_{(\var{X}^{\blow}_\alpha)^\an\ast}\CC_{(\var{X}^{\blow}_\alpha)^\an\setminus D_\alpha^\an})
\otimes \bfE(b_\alpha^\an)^{-1}K$$
has a modified quasi-normal form along $D_\alpha^\an$
and hence
$$I_{(\var{X}^{\blow}_\alpha)^\an}^\rmE(
\pi^{-1}(\rho_{(\var{X}^{\blow}_\alpha)^\an\ast}\CC_{(\var{X}^{\blow}_\alpha)^\an\setminus D_\alpha^\an})
\otimes \bfE(b_\alpha^\an)^{-1}K)$$
has a modified quasi-normal form along $D_\alpha^\an$ for any $\alpha$ by Proposition \ref{prop3.14}.
Moreover we have isomorphisms in $\BEC(\I\CC_{(\var{X}^{\blow}_\alpha)^\an})$
$$I_{(\var{X}^{\blow}_\alpha)^\an}^\rmE(
\pi^{-1}(\rho_{(\var{X}^{\blow}_\alpha)^\an\ast}\CC_{(\var{X}^{\blow}_\alpha)^\an\setminus D_\alpha^\an})
\otimes \bfE(b_\alpha^\an)^{-1}K)
\simeq
\pi^{-1}(\iota_{(\var{X}^{\blow}_\alpha)^\an}\CC_{(\var{X}^{\blow}_\alpha)^\an\setminus D_\alpha^\an})
\otimes \bfE(b_\alpha^\an)^{-1}(I_{X^\an}^\rmE K)$$
by \cite[Prop.\:3.7 (2)(iii),(vi), (4)(i)]{Ito21b}.
Therefore, we have $I_{X^\an}^\rmE K\in \ZEC_{\CC\mbox{\scriptsize -}c}(\I\CC_X)$.

Let $K\in\ZEC_{\CC\mbox{\scriptsize -}c}(\I\CC_X)$.
Then there exists an algebraic stratification $\{X_\alpha\}_\alpha$ of $X$
such that  for any $\alpha$
$$\pi^{-1}(\iota_{(\var{X}^{\blow}_\alpha)^\an}\CC_{(\var{X}^{\blow}_\alpha)^\an\setminus D_\alpha^\an})
\otimes \bfE(b_\alpha^\an)^{-1}K$$
has a modified quasi-normal form along $D_\alpha^\an$
and hence
$$\lambda_{(\var{X}^{\blow}_\alpha)^\an}^\rmE(
\pi^{-1}(\iota_{(\var{X}^{\blow}_\alpha)^\an}\CC_{(\var{X}^{\blow}_\alpha)^\an\setminus D_\alpha^\an})
\otimes \bfE(b_\alpha^\an)^{-1}K)$$
has a modified quasi-normal form along $D_\alpha^\an$ for any $\alpha$ by Proposition \ref{prop3.14}.
Moreover we have isomorphisms in $\BEC(\CC_{(\var{X}^{\blow}_\alpha)^\an}^\sub)$
$$\lambda_{(\var{X}^{\blow}_\alpha)^\an}^\rmE(
\pi^{-1}(\iota_{(\var{X}^{\blow}_\alpha)^\an}\CC_{(\var{X}^{\blow}_\alpha)^\an\setminus D_\alpha^\an})
\otimes \bfE(b_\alpha^\an)^{-1}K)
\simeq
\pi^{-1}(\rho_{(\var{X}^{\blow}_\alpha)^\an\ast}\CC_{(\var{X}^{\blow}_\alpha)^\an\setminus D_\alpha^\an})
\otimes \bfE(b_\alpha^\an)^{-1}(\lambda_{X^\an}^\rmE K)$$
by \cite[Prop.\:3.7 (3)(i), (4)(ii),(iv)]{Ito21b}.
Therefore, we have $\lambda_{X^\an}^\rmE K\in \ZEC_{\CC\mbox{\scriptsize -}c}(\CC_X^\sub)$.
\end{proof}

\begin{corollary}
The category $\ZEC_{\CC\mbox{\scriptsize -}c}(\CC_X^\sub)$
is full subcategory of $\ZEC_{\CC\mbox{\scriptsize -}c}(\CC_{X^\an}^\sub)$.
Hence the category $\BEC_{\CC\mbox{\scriptsize -}c}(\CC_X^\sub)$
is full triangulated subcategory of $\BEC_{\CC\mbox{\scriptsize -}c}(\CC_{X^\an}^\sub)$.
\end{corollary}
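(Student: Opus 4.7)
The plan is to reduce this to the analogous fact in the enhanced ind-sheaf setting, which has already been observed (the remark preceding Theorem \ref{thm-Ito21}): namely $\ZEC_{\CC\mbox{\scriptsize -}c}(\I\CC_X)$ is a full subcategory of $\ZEC_{\CC\mbox{\scriptsize -}c}(\I\CC_{X^\an})$. This inclusion holds because any algebraic stratification of $X$ is in particular a complex analytic stratification of $X^\an$, and the algebraic blow-ups used in the definition of $\AC$ analytify to the complex blow-ups used in Definition \ref{def-const}.

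First I would invoke Proposition \ref{prop3.23} to obtain the equivalence
$I_{X^\an}^\rmE\colon \ZEC_{\CC\mbox{\scriptsize -}c}(\CC_X^\sub)\simto \ZEC_{\CC\mbox{\scriptsize -}c}(\I\CC_X)$
in the algebraic setting, and then Proposition \ref{prop3.17} (applied to $X^\an$) to obtain the equivalence
$I_{X^\an}^\rmE\colon \ZEC_{\CC\mbox{\scriptsize -}c}(\CC_{X^\an}^\sub)\simto \ZEC_{\CC\mbox{\scriptsize -}c}(\I\CC_{X^\an})$
in the analytic setting. Crucially, both equivalences are induced by the \emph{same} functor $I_{X^\an}^\rmE$, with common inverse $\lambda_{X^\an}^\rmE$.

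Given $K\in \ZEC_{\CC\mbox{\scriptsize -}c}(\CC_X^\sub)$, the first equivalence places $I_{X^\an}^\rmE K$ in $\ZEC_{\CC\mbox{\scriptsize -}c}(\I\CC_X)$, which by the ind-sheaf-level inclusion above sits inside $\ZEC_{\CC\mbox{\scriptsize -}c}(\I\CC_{X^\an})$. Applying the analytic equivalence in reverse and using the canonical isomorphism $\id\simto \lambda_{X^\an}^\rmE\circ I_{X^\an}^\rmE$ (from Theorem \ref{thm2.6} (1), transported through Proposition \ref{prop3.4}) yields $K\simeq \lambda_{X^\an}^\rmE I_{X^\an}^\rmE K \in \ZEC_{\CC\mbox{\scriptsize -}c}(\CC_{X^\an}^\sub)$. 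Since both categories are full subcategories of $\ZEC(\CC_{X^\an}^\sub)$, fullness of the inclusion is automatic. The triangulated statement is then immediate, since both $\BEC_{\CC\mbox{\scriptsize -}c}(\CC_X^\sub)$ and $\BEC_{\CC\mbox{\scriptsize -}c}(\CC_{X^\an}^\sub)$ are defined cohomology-wise from their respective abelian subcategories, and the abelian inclusion has just been established.

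I do not anticipate a genuine obstacle here: every nontrivial piece of work (the t-exactness of $I_{M_\infty}^\rmE$, the passage of the normal / quasi-normal / modified quasi-normal form conditions across $I_{X^\an}^\rmE$ and $\lambda_{X^\an}^\rmE$, and the corresponding equivalences at the $\CC$-constructible level) is already contained in Propositions \ref{prop3.8}, \ref{prop3.11}, \ref{prop3.14}, \ref{prop3.17} and \ref{prop3.23}. What remains is essentially a formal diagram chase, the key observation being the compatibility of the analytic and algebraic equivalences through the common pair $(I_{X^\an}^\rmE,\lambda_{X^\an}^\rmE)$.
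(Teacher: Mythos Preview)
Your proposal is correct and follows exactly the same route as the paper's one-line proof, which cites Propositions \ref{prop3.17}, \ref{prop3.23} and \cite[Prop.\:3.3]{Ito21}; the last of these is precisely the ind-sheaf-level inclusion $\ZEC_{\CC\mbox{\scriptsize -}c}(\I\CC_X)\subset \ZEC_{\CC\mbox{\scriptsize -}c}(\I\CC_{X^\an})$ that you invoke (it is the remark recalled right after the definition of $\ZEC_{\CC\mbox{\scriptsize -}c}(\I\CC_X)$, i.e.\ just before Theorem~\ref{thm-Ito21_complete} rather than Theorem~\ref{thm-Ito21}). Your write-up simply makes explicit the diagram chase through the common pair $(I_{X^\an}^\rmE,\lambda_{X^\an}^\rmE)$ that the paper leaves implicit.
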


\begin{proof}
This follows from Propositions \ref{prop3.17}, \ref{prop3.23} and \cite[Prop.\:3.3]{Ito21}.
\end{proof}

\begin{theorem}
Let $X$ be a smooth complete algebraic variety over $\CC$.
Then there exists an equivalence of triangulated categories:
\[\Sol_X^{\rmE,\sub} \colon \BDChol(\D_X)^{\op}\simto \BEC_{\CC\mbox{\scriptsize -}c}(\I\CC_X),\
\M\mapsto \Sol_X^{\rmE,\sub}(\M) := \Sol_{X^\an}^{\rmE, \sub}(\M^\an).\]
\end{theorem}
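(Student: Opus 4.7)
The plan is to deduce this algebraic version of the theorem as a direct consequence of three already established results: the algebraic irregular Riemann--Hilbert correspondence for enhanced ind-sheaves over a smooth complete variety (Theorem \ref{thm-Ito21_complete}), the equivalence between $\CC$-constructible enhanced subanalytic sheaves and $\CC$-constructible enhanced ind-sheaves in the algebraic setting (Proposition \ref{prop3.23}), and the analytic sub-variant of the Riemann--Hilbert correspondence encoded in Theorem \ref{thm1.8}. Note that the right-hand side of the statement should read $\BEC_{\CC\mbox{\scriptsize -}c}(\CC_X^\sub)$.

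First, I would identify $\Sol_X^{\rmE, \sub}$ with the composition $\lambda_{X^\an}^\rmE \circ \Sol_X^\rmE$. By Theorem \ref{thm1.8} applied to the complex manifold $X^\an$, the commutative diagram yields $I_{X^\an}^\rmE \circ \Sol_{X^\an}^{\rmE, \sub} \simeq \Sol_{X^\an}^\rmE$. Applying $\lambda_{X^\an}^\rmE$ to both sides, and using that $\lambda_{X^\an}^\rmE \circ I_{X^\an}^\rmE \simeq \id$ on $\BEC_{\RR\mbox{\scriptsize -}c}(\CC_{X^\an}^\sub)$ (which follows from Theorem \ref{thm2.6} (3), since $\lambda_{X^\an}^\rmE$ is the inverse equivalence to $I_{X^\an}^\rmE$ on the $\RR$-constructible subcategory where $\Sol_{X^\an}^{\rmE, \sub}$ takes values), we obtain $\Sol_{X^\an}^{\rmE, \sub} \simeq \lambda_{X^\an}^\rmE \circ \Sol_{X^\an}^\rmE$. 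Applied to the analytification $\M^\an$ of a holonomic algebraic $\D_X$-module $\M$, and recalling the definitions $\Sol_X^{\rmE,\sub}(\M) := \Sol_{X^\an}^{\rmE, \sub}(\M^\an)$ and $\Sol_X^\rmE(\M) := \Sol_{X^\an}^\rmE(\M^\an)$, this gives a functorial isomorphism $\Sol_X^{\rmE, \sub}(\M) \simeq \lambda_{X^\an}^\rmE(\Sol_X^\rmE(\M))$.

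Next, I would assemble the pieces: Theorem \ref{thm-Ito21_complete} asserts that $\Sol_X^\rmE \colon \BDChol(\D_X)^{\op} \simto \BEC_{\CC\mbox{\scriptsize -}c}(\I\CC_X)$ is an equivalence when $X$ is smooth and complete, and Proposition \ref{prop3.23} provides an equivalence $\lambda_{X^\an}^\rmE \colon \BEC_{\CC\mbox{\scriptsize -}c}(\I\CC_X) \simto \BEC_{\CC\mbox{\scriptsize -}c}(\CC_X^\sub)$. Their composition is an equivalence of triangulated categories, and by the identification in the previous paragraph this composition agrees with $\Sol_X^{\rmE, \sub}$, completing the proof. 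There is no substantial obstacle here: the heavy lifting has already been carried out in Theorem \ref{thm-Ito21_complete} and in Proposition \ref{prop3.23}; the only point of care is the compatibility $\Sol_X^{\rmE,\sub} \simeq \lambda_{X^\an}^\rmE \circ \Sol_X^\rmE$, which follows routinely from the definition of both functors via analytification together with Theorem \ref{thm1.8} on $X^\an$.
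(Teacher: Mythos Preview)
Your proposal is correct and follows essentially the same route as the paper. The paper's one-line proof cites Proposition~\ref{prop3.23}, \cite[Thm.\:3.7]{Ito21} (which is Theorem~\ref{thm-Ito21_complete} here), and \cite[Lem.\:3.40 (3)]{Ito21b}; the last reference plays exactly the role of your compatibility step $\Sol_{X^\an}^{\rmE,\sub}\simeq \lambda_{X^\an}^\rmE\circ\Sol_{X^\an}^\rmE$, which you instead extract from the commutative diagram in Theorem~\ref{thm1.8} together with Theorem~\ref{thm2.6}\,(3). Your observation that the target should read $\BEC_{\CC\mbox{\scriptsize -}c}(\CC_X^\sub)$ is also correct.
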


\begin{proof}
This follows from Proposition \ref{prop3.23}, \cite[Thm.\:3.7]{Ito21} and \cite[Lem.\:3.40 (3)]{Ito21b}.
\end{proof}

Let $\tl{X}$ be a smooth complete algebraic variety such that $X\subset \tl{X}$
and $D := \tl{X}\setminus X$ is a normal crossing divisor of $\tl{X}$.
We shall consider a bordered space $X^\an_\infty = (X^\an, \tl{X}^\an)$
and the triangulated category $\BEC(\CC_{X^\an_\infty}^\sub)$ of enhanced subanalytic sheaves 
on $X^\an_\infty$.
Note that $\BEC(\CC_{X^\an_\infty}^\sub)$ does not depend on the choice of $\tl{X}$.

\begin{definition}
We say that an enhanced subanalytic sheaf $K\in\BEC(\CC_{X^\an_\infty}^\sub)$ is
algebraic $\CC$-constructible on $X_\infty^\an$
if $\bfE j_{X_\infty^\an!!}K \in\BEC_{\CC\mbox{\scriptsize -}c}(\CC_{\tl{X}}^\sub)$.
\end{definition}

We denote by $\BEC_{\CC\mbox{\scriptsize -}c}(\CC_{X_\infty}^\sub)$
the full triangulated subcategory of $\BEC(\CC_{X^\an_\infty}^\sub)$
consisting of algebraic $\CC$-constructible enhanced subanalytic sheaves on $X_\infty^\an$.
Remark that $\ZEC_{\CC\mbox{\scriptsize -}c}(\I\CC_{X_\infty})$ is subcategory of
$\ZEC_{\RR\mbox{\scriptsize -}c}(\I\CC_{X_\infty^\an})$.
%See \cite[Prop.\:3.21]{Ito21} for the details.

\begin{theorem}\label{main3}
The functors $I_{X^\an}^\rmE, \lambda_{X^\an}^\rmE$ induce an equivalence of triangulated categories:
\[\xymatrix@M=7pt@C=45pt{
\BEC_{\CC\mbox{\scriptsize -}c}(\CC_{X_\infty}^\sub)\ar@<0.8ex>@{->}[r]^-{I_{X_\infty^\an}^\rmE}_-\sim
&
\BEC_{\CC\mbox{\scriptsize -}c}(\I\CC_{X_\infty})
\ar@<0.8ex>@{->}[l]^-{\lambda_{X_\infty^\an}^\rmE}.
}\]
\end{theorem}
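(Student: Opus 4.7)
The plan is to reduce to the complete case established in Theorem \ref{main1} (equivalently, Proposition \ref{prop3.23}) via the morphism of bordered spaces $j^\an\colon X_\infty^\an\to \tl{X}^\an$. By the definitions of the two notions of algebraic $\CC$-constructibility, an object $K$ on the bordered space is $\CC$-constructible precisely when $\bfE j^\an_{!!}K$ is $\CC$-constructible on the smooth complete variety $\tl{X}$. Hence, once it is shown that $I^\rmE$ and $\lambda^\rmE$ intertwine the proper direct images $\bfE j^\an_{!!}$ on both sides, the equivalence on bordered spaces will follow immediately from the complete case.

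First I would establish the compatibility isomorphisms
$$I_{\tl{X}^\an}^\rmE\circ \bfE j^\an_{!!} \simeq \bfE j^\an_{!!}\circ I_{X_\infty^\an}^\rmE, \qquad \lambda_{\tl{X}^\an}^\rmE\circ \bfE j^\an_{!!} \simeq \bfE j^\an_{!!}\circ \lambda_{X_\infty^\an}^\rmE.$$
These should follow from the commutativity of $I^\rmE$ and $\lambda^\rmE$ with the proper direct image $\bfE f_{!!}$ for morphisms of real analytic bordered spaces, as recorded in \cite[Prop.\:3.19]{Ito21b}, applied to the open embedding $j^\an$. Combining with Proposition \ref{prop3.23} applied to $\tl{X}$, I obtain the chain of equivalences
\begin{align*}
K \in \BEC_{\CC\mbox{\scriptsize -}c}(\CC_{X_\infty}^\sub)
&\iff \bfE j^\an_{!!}K \in \BEC_{\CC\mbox{\scriptsize -}c}(\CC_{\tl{X}}^\sub) \\
&\iff I_{\tl{X}^\an}^\rmE(\bfE j^\an_{!!}K) \in \BEC_{\CC\mbox{\scriptsize -}c}(\I\CC_{\tl{X}}) \\
&\iff \bfE j^\an_{!!}(I_{X_\infty^\an}^\rmE K) \in \BEC_{\CC\mbox{\scriptsize -}c}(\I\CC_{\tl{X}}) \\
&\iff I_{X_\infty^\an}^\rmE K \in \BEC_{\CC\mbox{\scriptsize -}c}(\I\CC_{X_\infty}),
\end{align*}
where the successive steps use Definition \ref{def-algconst}, Proposition \ref{prop3.23}, the compatibility above, and Definition \ref{def-algconst} on the ind-sheaf side. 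The symmetric reasoning applies to $\lambda_{X_\infty^\an}^\rmE$, so both functors preserve the respective $\CC$-constructibility conditions; the quasi-inverse relations are then inherited from Theorem \ref{thm2.6}(2).

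The main obstacle I anticipate is the rigorous justification of the compatibility $\bfE j^\an_{!!}\circ I_{X_\infty^\an}^\rmE \simeq I_{\tl{X}^\an}^\rmE\circ \bfE j^\an_{!!}$. If this is not directly available as a special case of results in \cite{Ito21b}, it would have to be unwound from the definitions of $I^\rmE$ and $\lambda^\rmE$ via the quotient functors $\Q_{M_\infty}^\sub$, $\Q_{M_\infty}$ and the behavior of the non-enhanced $I_{M_\infty}$, $\bfR J_{M_\infty}$ under $\bfR j_{!!}$ for bordered spaces (cf.\ Theorem \ref{thm2.2}), together with the functoriality of $\bfL_{M_\infty}^{\rmE,\sub}$ and $\bfL_{M_\infty}^{\rmE}$ under $\bfE j^\an_{!!}\times\id_{\RR_\infty}$. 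Everything else in the argument is formal manipulation of adjunctions and quotients.
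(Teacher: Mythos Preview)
Your proposal is correct and follows essentially the same approach as the paper: reduce to the complete variety $\tl{X}$ via $\bfE j^\an_{!!}$, invoke Proposition \ref{prop3.23} there, and transfer back using the compatibility of $I^\rmE$ and $\lambda^\rmE$ with proper direct image. The obstacle you anticipate is not one: the isomorphisms $I_{\tl{X}^\an}^\rmE\circ \bfE j^\an_{!!}\simeq \bfE j^\an_{!!}\circ I_{X_\infty^\an}^\rmE$ and $\lambda_{\tl{X}^\an}^\rmE\circ \bfE j^\an_{!!}\simeq \bfE j^\an_{!!}\circ \lambda_{X_\infty^\an}^\rmE$ are exactly the content of \cite[Prop.\:3.19 (2)(ii), (4)(ii)]{Ito21b}, which the paper cites directly.
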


\begin{proof}
By Proposition \ref{prop3.4},
it is enough to prove that 
$I_{X_\infty^\an}^\rmE(\,\BEC_{\CC\mbox{\scriptsize -}c}(\CC_{X_\infty}^\sub)\,)
\subset \BEC_{\CC\mbox{\scriptsize -}c}(\I\CC_{X_\infty})$
and 
$\lambda_X^\rmE(\,\BEC_{\CC\mbox{\scriptsize -}c}(\I\CC_{X_\infty})\,)
\subset \BEC_{\CC\mbox{\scriptsize -}c}(\CC_{X_\infty}^\sub)$.

Let $K\in \BEC_{\CC\mbox{\scriptsize -}c}(\CC_{X_\infty}^\sub)$.
Then we have $\bfE j_{X_\infty^\an!!}K\in \BEC_{\CC\mbox{\scriptsize -}c}(\CC_{\tl{X}}^\sub)$
and hence $I_{\tl{X}^\an}^\rmE(\bfE j_{X_\infty^\an!!}K)\in \BEC_{\CC\mbox{\scriptsize -}c}(\I\CC_{\tl{X}})$
by Proposition \ref{prop3.23}.
Moreover we have isomorphisms in $\BEC(\I\CC_{\tl{X}})$
$$I_{\tl{X}^\an}^\rmE(\bfE j_{X_\infty^\an!!}K)\simeq \bfE j_{X_\infty^\an!!}(I_{X_\infty^\an}^\rmE K)$$
by \cite[Prop.\:3.19 (2)(ii)]{Ito21b}.
This implies that $I_{X_\infty^\an}^\rmE K\in \BEC_{\CC\mbox{\scriptsize -}c}(\I\CC_{X_\infty})$.

Let $K\in \BEC_{\CC\mbox{\scriptsize -}c}(\I\CC_{X_\infty})$.
Then we have $\bfE j_{X_\infty^\an!!}K\in \BEC_{\CC\mbox{\scriptsize -}c}(\I\CC_{\tl{X}})$
and hence $\lambda_{\tl{X}^\an}^\rmE(\bfE j_{X_\infty^\an!!}K)\in
\BEC_{\CC\mbox{\scriptsize -}c}(\CC_{\tl{X}}^\sub)$
by Proposition \ref{prop3.23}.
Moreover we have isomorphisms in $\BEC(\CC_{\tl{X}}^\sub)$
$$\lambda_{\tl{X}^\an}^\rmE(\bfE j_{X_\infty^\an!!}K)\simeq
\bfE j_{X_\infty^\an!!}(\lambda_{X_\infty^\an}^\rmE K)$$
by \cite[Prop.\:3.19 (4)(ii)]{Ito21b}.
This implies that $\lambda_{X_\infty^\an}^\rmE K\in \BEC_{\CC\mbox{\scriptsize -}c}(\CC_{X_\infty}^\sub)$.
\end{proof}

\begin{corollary}
The category $\BEC_{\CC\mbox{\scriptsize -}c}(\CC_{X_\infty}^\sub)$
is full triangulated subcategory of $\BEC_{\RR\mbox{\scriptsize -}c}(\CC_{X_\infty^\an}^\sub)$.
\end{corollary}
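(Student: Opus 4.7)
The plan is to mirror the argument used for the analytic corollary earlier in this section (the one stating that $\BEC_{\CC\mbox{\scriptsize -}c}(\CC_X^\sub)$ is a full triangulated subcategory of $\BEC_{\RR\mbox{\scriptsize -}c}(\CC_X^\sub)$), transporting the problem to the ind-sheaf side via $I_{X_\infty^\an}^\rmE$ and then descending through the equivalence of Theorem \ref{thm2.6}(3). The three ingredients in hand—namely Theorem \ref{main3}, the inclusion $\BEC_{\CC\mbox{\scriptsize -}c}(\I\CC_{X_\infty})\subset \BEC_{\RR\mbox{\scriptsize -}c}(\I\CC_{X_\infty^\an})$ recorded right after Definition \ref{def-algconst} (coming from \cite[Thm.\:3.11]{Ito21}), and Theorem \ref{thm2.6}(3)—fit together in such a way that the whole argument boils down to an object-level inclusion check.

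Concretely, take $K\in\BEC_{\CC\mbox{\scriptsize -}c}(\CC_{X_\infty}^\sub)$. First I would apply $I_{X_\infty^\an}^\rmE$: Theorem \ref{main3} places the image in $\BEC_{\CC\mbox{\scriptsize -}c}(\I\CC_{X_\infty})$. The inclusion just mentioned then lifts this to $I_{X_\infty^\an}^\rmE K\in\BEC_{\RR\mbox{\scriptsize -}c}(\I\CC_{X_\infty^\an})$. Applying the functor $\lambda_{X_\infty^\an}^\rmE$ and invoking Theorem \ref{thm2.6}(3), which asserts that $I_{X_\infty^\an}^\rmE$ restricts to an equivalence between $\BEC_{\RR\mbox{\scriptsize -}c}(\CC_{X_\infty^\an}^\sub)$ and $\BEC_{\RR\mbox{\scriptsize -}c}(\I\CC_{X_\infty^\an})$ with inverse $\lambda_{X_\infty^\an}^\rmE$, I conclude that $\lambda_{X_\infty^\an}^\rmE(I_{X_\infty^\an}^\rmE K)\in\BEC_{\RR\mbox{\scriptsize -}c}(\CC_{X_\infty^\an}^\sub)$. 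The canonical isomorphism $\id\simto\lambda_{X_\infty^\an}^\rmE\circ I_{X_\infty^\an}^\rmE$ provided by Theorem \ref{thm2.6}(1)-(2) then identifies the left-hand side with $K$, and hence $K\in\BEC_{\RR\mbox{\scriptsize -}c}(\CC_{X_\infty^\an}^\sub)$.

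Fullness is automatic since both sides are defined as full subcategories of $\BEC(\CC_{X_\infty^\an}^\sub)$, and the property of being a triangulated subcategory is inherited from this common ambient triangulated category. I do not anticipate any serious obstacle: every required ingredient is already available from Theorems \ref{main3} and \ref{thm2.6}, together with the cited results of \cite{Ito21, Ito21b}, so the proof reduces to a short three-step diagram chase entirely parallel to the analytic case treated just before Proposition \ref{prop3.21}.
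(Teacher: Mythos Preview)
Your proposal is correct and follows essentially the same approach as the paper: transport the question to the ind-sheaf side via Theorem \ref{main3}, use the known inclusion $\BEC_{\CC\mbox{\scriptsize -}c}(\I\CC_{X_\infty})\subset \BEC_{\RR\mbox{\scriptsize -}c}(\I\CC_{X_\infty^\an})$, and descend back via the equivalence for $\RR$-constructible objects. The only cosmetic difference is that the paper cites Proposition \ref{prop3.6} (the heart-level equivalence) where you cite Theorem \ref{thm2.6}(3) (the triangulated-level equivalence); your choice is arguably the more direct one here since the statement concerns $\BEC$ rather than $\ZEC$.
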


\begin{proof}
This follows from Proposition \ref{prop3.6}, Theorem \ref{main3}
and the fact that
the triangulated category $\BEC_{\CC\mbox{\scriptsize -}c}(\I\CC_{X_\infty})$ is the full triangulated subcategory
of $\BEC_{\RR\mbox{\scriptsize -}c}(\I\CC_{X^\an_\infty})$.
\end{proof}

\begin{proposition}\label{prop3.31}
The functor $e_{X_\infty}^\sub\circ \rho_{X_\infty\ast}$ induces an embedding
$$e_{X_\infty}^\sub\circ \rho_{X_\infty\ast}\colon
\BDC_{\CC-c}(\CC_X)\hookrightarrow\BEC_{\CC-c}(\CC_{X_\infty}^\sub).$$
\end{proposition}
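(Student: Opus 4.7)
The plan is to follow the same pattern as Proposition~\ref{prop3.21} (the analytic case), now using Theorem~\ref{main3} as the algebraic bridge between $\CC$-constructible enhanced subanalytic sheaves and $\CC$-constructible enhanced ind-sheaves on $X_\infty^\an$, together with the already-known ind-sheaf statement contained in \cite[\S 3.2]{Ito21}.

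First, I would verify that the composite $e_{X_\infty}^\sub\circ\rho_{X_\infty\ast}$ is fully faithful on the whole of $\BDC(\CC_X)$, without any constructibility assumption. Indeed, $\rho_{X_\infty\ast}\colon\BDC(\CC_X)\to\BDC(\CC_{X_\infty}^\sub)$ is fully faithful because its left adjoint $\rho_{X_\infty}^{-1}$ satisfies $\rho_{X_\infty}^{-1}\rho_{X_\infty\ast}\simeq\id$, and $e_{X_\infty}^\sub\colon\BDC(\CC_{X_\infty}^\sub)\to\BEC(\CC_{X_\infty}^\sub)$ is fully faithful by \cite[Prop.\:3.23]{Ito21b}. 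Composition of fully faithful functors is fully faithful, so the only remaining content is to show that the image lies in $\BEC_{\CC\mbox{\scriptsize -}c}(\CC_{X_\infty}^\sub)$.

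For the image condition, take $F\in\BDC_{\CC\mbox{\scriptsize -}c}(\CC_X)$. By Theorem~\ref{main3}, it is equivalent to show that $I_{X_\infty^\an}^\rmE\bigl(e_{X_\infty}^\sub\rho_{X_\infty\ast}F\bigr)$ lies in $\BEC_{\CC\mbox{\scriptsize -}c}(\I\CC_{X_\infty})$. Applying the commutative square at the end of \S\ref{2.11} (the square $I_{M_\infty}^\rmE\circ e_{M_\infty}^\sub\simeq e_{M_\infty}\circ I_{M_\infty}$) together with the compatibility $I_{X_\infty^\an}\rho_{X_\infty\ast}F\simeq\iota_{X_\infty^\an}F$ (a consequence of \cite[Prop.\:3.24]{Ito21b}, valid because $F$ is classically constructible hence its direct image is an $\I\RR$-constructible ind-sheaf of the required form), we reduce to
\[
I_{X_\infty^\an}^\rmE\bigl(e_{X_\infty}^\sub\rho_{X_\infty\ast}F\bigr)\simeq e_{X_\infty^\an}\iota_{X_\infty^\an}F.
\]
The right-hand side is an algebraic $\CC$-constructible enhanced ind-sheaf on $X_\infty^\an$ by \cite[Prop.\:3.1]{Ito23} (i.e.\ the analogue of \cite[Prop.\:3.1]{Ito23} in the algebraic setting, which amounts to saying that $\bfE j_{X_\infty^\an!!}(e_{X_\infty^\an}\iota_{X_\infty^\an}F)\simeq e_{\tl{X}^\an}\iota_{\tl{X}^\an}(j_!F)$ is $\CC$-constructible on the complete variety $\tl{X}$, since $j_!F\in\BDC_{\CC\mbox{\scriptsize -}c}(\CC_{\tl{X}})$).

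The step that will require a little care is the commutation of $\bfE j_{X_\infty^\an!!}$ with $e^\sub\circ\rho_\ast$ used implicitly above, to verify directly through the definition of algebraic $\CC$-constructibility that $\bfE j_{X_\infty^\an!!}\bigl(e_{X_\infty}^\sub\rho_{X_\infty\ast}F\bigr)\simeq e_{\tl{X}^\an}^\sub\rho_{\tl{X}^\an\ast}(j_!F)$ in $\BEC_{\CC\mbox{\scriptsize -}c}(\CC_{\tl{X}}^\sub)$; this is obtained by combining \cite[Prop.\:3.19 (4)(ii)]{Ito21b} with the analogous commutation for $\rho_\ast$. The analytic case Proposition~\ref{prop3.21} applied to $\tl{X}$ then finishes the argument, since $j_!F$ is $\CC$-constructible on $\tl{X}$.
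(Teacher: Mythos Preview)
Your approach is essentially the same as the paper's: reduce to the ind-sheaf side via Theorem~\ref{main3}, use the compatibilities $I_{X_\infty^\an}^\rmE\circ e_{X_\infty}^\sub\simeq e_{X_\infty^\an}\circ I_{X_\infty^\an}$ and $I_{X_\infty^\an}\circ\rho_{X_\infty\ast}\simeq\iota_{X_\infty^\an}$ (the paper records these as \cite[Props.\:3.7 (3)(i), (4)(i), 3.24]{Ito21b}), and then invoke the known algebraic ind-sheaf statement. The one correction is the final citation: the algebraic analogue you describe is \cite[Prop.\:3.7]{Ito23}, not Prop.\:3.1, which is the analytic version used in Proposition~\ref{prop3.21}; your alternative route in the last paragraph (push forward to $\tl{X}$ and apply Proposition~\ref{prop3.21} there) also works, but is not the path the paper takes.
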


\begin{proof}
This follows from Theorem \ref{main3}, \cite[Props.\:3.7 (3)(i), (4)(i), 3.24]{Ito21b} and \cite[Prop.\:3.7]{Ito23}.
\end{proof}

Let us consider a functor
$$\Sol_{X_\infty}^{\rmE,\sub} \colon
\BDC(\D_X)^{\op}\to \BEC(\CC_{X_\infty}^\sub),\
\M\mapsto \bfE j_{X_\infty^\an}^{-1}\Sol_{\tl{X}}^{\rmE, \sub}(\bfD j_\ast\M).$$
Here $\bfD j_!$ is the proper direct image functor for algebraic $\D$-modules
by the open embedding $j\colon X\to\tl{X}$.

\begin{lemma}\label{lem3.32}
For any $\M\in\BDChol(\D_X)$,
there exists an isomorphism in $\BEC(\CC_{X_\infty^\an}^\sub):$
$$\Sol_{X_\infty}^{\rmE, \sub}(\M)\simeq
J_{X_\infty^\an}^\rmE\Sol_{X_\infty}^{\rmE}(\M).$$
\end{lemma}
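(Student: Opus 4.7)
The plan is to combine the known relation $\Sol^{\rmE, \sub} \simeq J^\rmE \circ \Sol^\rmE$ on the complete variety $\tl{X}$ with a commutation between $\bfE j^{-1}$ and the right adjoint $J^\rmE$, using that the relevant objects land in the $\I\RR$-constructible subcategory where $J^\rmE$ agrees with the inverse equivalence $\lambda^\rmE$.

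First I would extract the pointwise identity on $\tl{X}$. The commutative diagram in Theorem \ref{thm1.8} says $I_{\tl{X}^\an}^\rmE \circ \Sol_{\tl{X}}^{\rmE, \sub} \simeq \Sol_{\tl{X}}^\rmE$. Applying $J_{\tl{X}^\an}^\rmE$ and using the unit isomorphism $\id \simto J_{\tl{X}^\an}^\rmE \circ I_{\tl{X}^\an}^\rmE$ of Theorem \ref{thm2.6}\,(1) yields a natural isomorphism
\[
\Sol_{\tl{X}}^{\rmE, \sub}(\bfD j_\ast \M) \;\simeq\; J_{\tl{X}^\an}^\rmE \Sol_{\tl{X}}^\rmE(\bfD j_\ast \M).
\]
Substituting into the definition gives
\[
\Sol_{X_\infty}^{\rmE, \sub}(\M) \;=\; \bfE j_{X_\infty^\an}^{-1}\Sol_{\tl{X}}^{\rmE, \sub}(\bfD j_\ast \M) \;\simeq\; \bfE j_{X_\infty^\an}^{-1} J_{\tl{X}^\an}^\rmE \Sol_{\tl{X}}^\rmE(\bfD j_\ast \M).
\]

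Next I would commute $\bfE j_{X_\infty^\an}^{-1}$ past $J_{\tl{X}^\an}^\rmE$. Here the key observation is that $\Sol_{\tl{X}}^\rmE(\bfD j_\ast \M)$ lies in $\BEC_{\CC\mbox{\scriptsize -}c}(\I\CC_{\tl{X}}) \subset \BEC_{\I\RR\mbox{\scriptsize -}c}(\I\CC_{\tl{X}})$ by Theorem \ref{thm-Ito21_complete}, so on this object $J_{\tl{X}^\an}^\rmE$ coincides with the equivalence $\lambda_{\tl{X}^\an}^\rmE$. Then Proposition 3.19\,(4)(i) of \cite{Ito21b} provides a natural isomorphism $\bfE j_{X_\infty^\an}^{-1} \lambda_{\tl{X}^\an}^\rmE \simeq \lambda_{X_\infty^\an}^\rmE \bfE j_{X_\infty^\an}^{-1}$, yielding
\[
\Sol_{X_\infty}^{\rmE, \sub}(\M) \;\simeq\; \lambda_{X_\infty^\an}^\rmE \bfE j_{X_\infty^\an}^{-1} \Sol_{\tl{X}}^\rmE(\bfD j_\ast \M) \;=\; \lambda_{X_\infty^\an}^\rmE \Sol_{X_\infty}^\rmE(\M).
\]
Finally, since $\Sol_{X_\infty}^\rmE(\M) \in \BEC_{\CC\mbox{\scriptsize -}c}(\I\CC_{X_\infty}) \subset \BEC_{\I\RR\mbox{\scriptsize -}c}(\I\CC_{X_\infty^\an})$ by Theorem \ref{thm-Ito21}, we may replace $\lambda_{X_\infty^\an}^\rmE$ by $J_{X_\infty^\an}^\rmE$ on it, giving the desired isomorphism.

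The main potential obstacle is verifying that the commutation $\lambda^\rmE \circ \bfE j^{-1} \simeq \bfE j^{-1} \circ \lambda^\rmE$ applies at the level of bordered spaces (with the inclusion $j_{X_\infty^\an}\colon X_\infty^\an \to \tl{X}^\an$), and that the equality $J^\rmE \simeq \lambda^\rmE$ is correctly invoked only on the $\I\RR$-constructible subcategories where Theorem \ref{thm2.6}\,(2) furnishes the inverse. Both of these amount to careful bookkeeping using the results of \cite{Ito21b}, rather than a fresh technical difficulty.
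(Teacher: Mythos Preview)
Your argument is correct, but the paper's route is shorter and avoids the detour through $\lambda^\rmE$. The paper observes that since $j_{X_\infty^\an}$ is an open embedding one has $\bfE j_{X_\infty^\an}^{-1}\simeq \bfE j_{X_\infty^\an}^{!}$, and then invokes \cite[Prop.\:3.19\,(3)(ii)]{Ito21b}, which gives a natural commutation $\bfE f^{!}\circ J^\rmE \simeq J^\rmE\circ \bfE f^{!}$ valid on the whole of $\BEC(\I\CC)$ (right adjoints commute with right adjoints). Combined with \cite[Lem.\:3.40\,(3)]{Ito21b}, which is exactly the identity $\Sol_{\tl{X}}^{\rmE,\sub}\simeq J_{\tl{X}^\an}^\rmE\Sol_{\tl{X}}^{\rmE}$ you derived from Theorem~\ref{thm1.8}, the result follows immediately. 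Your version instead passes to $\lambda^\rmE$ via Theorem~\ref{thm2.6}\,(2), uses \cite[Prop.\:3.19\,(4)(i)]{Ito21b} for the commutation with $\bfE j^{-1}$, and then passes back to $J^\rmE$; this costs you the two invocations of $\CC$-constructibility (Theorems~\ref{thm-Ito21_complete} and~\ref{thm-Ito21}) that the paper's argument does not need.
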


\begin{proof}
This follows from the fact that $\bfE j_{X_\infty^\an}^!\simeq \bfE j_{X_\infty^\an}^{-1}$
and \cite[Prop.\:3.19 (3)(ii), Lem.\:3.40 (3)]{Ito21b}.
\end{proof}

\begin{theorem}\label{main4}
The functor $\Sol_{X_\infty}^{\rmE,\sub}$ induces an equivalence of triangulated categories:
$$\Sol_{X_\infty}^{\rmE,\sub} \colon
\BDChol(\D_X)^{\op}\simto \BEC_{\CC\mbox{\scriptsize -}c}(\CC_{X_\infty}^\sub),\
\M\mapsto \bfE j_{X_\infty^\an}^{-1}\Sol_{\tl{X}}^{\rmE, \sub}(\bfD j_\ast\M)$$
and the following diagrams are commutative:
\[\xymatrix@M=7pt@R=45pt@C=70pt{
\BDChol(\D_X)
\ar@{->}[r]_-{\Sol_{X_\infty}^{\rmE, \sub}}^-\sim
\ar@{->}[rd]_-{\Sol_{X_\infty}^{\rmE}}^-\sim
 & \BEC_{\CC\mbox{\scriptsize -}c}(\CC_X^\sub)
 \ar@<-1.0ex>@{->}[d]_-{I_{X_\infty^\an}^\rmE}\ar@{}[d]|-\wr\\
{}&\BEC_{\CC\mbox{\scriptsize -}c}(\I\CC_X),}
\hspace{37pt}
\xymatrix@M=7pt@R=45pt@C=70pt{
\BDChol(\D_X)
\ar@{->}[r]_-{\Sol_{X_\infty}^{\rmE, \sub}}^-\sim
\ar@{->}[rd]_-{\Sol_{X_\infty}^{\rmE}}^-\sim
 & \BEC_{\CC\mbox{\scriptsize -}c}(\CC_X^\sub)\\
{}&\BEC_{\CC\mbox{\scriptsize -}c}(\I\CC_X)\ar@<-1.0ex>@{->}[u]_-{\lambda_{X_\infty^\an}^\rmE}\ar@{}[u]|-\wr.
}\]
\end{theorem}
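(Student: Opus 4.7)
The plan is to deduce Theorem \ref{main4} by composing Theorem \ref{thm-Ito21} (the algebraic irregular Riemann--Hilbert correspondence for enhanced ind-sheaves on $X_\infty^\an$) with Theorem \ref{main3} (the $\CC$-constructible enhanced ind-sheaf / enhanced subanalytic sheaf comparison on $X_\infty^\an$). Lemma \ref{lem3.32} provides the bridge: it identifies $\Sol_{X_\infty}^{\rmE,\sub}$ with $J_{X_\infty^\an}^\rmE \circ \Sol_{X_\infty}^{\rmE}$.

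First, I would verify that $\Sol_{X_\infty}^{\rmE,\sub}$ takes values in $\BEC_{\CC\mbox{\scriptsize -}c}(\CC_{X_\infty}^\sub)$. By Theorem \ref{thm-Ito21}, $\Sol_{X_\infty}^{\rmE}(\M)$ lies in $\BEC_{\CC\mbox{\scriptsize -}c}(\I\CC_{X_\infty})$, which is a full subcategory of $\BEC_{\I\RR\mbox{\scriptsize -}c}(\I\CC_{X_\infty})$. On the latter category, $J_{X_\infty^\an}^\rmE$ agrees with the inverse equivalence $\lambda_{X_\infty^\an}^\rmE$ of Theorem \ref{thm2.6}\,(2), and under Theorem \ref{main3} this restricts to an equivalence $\BEC_{\CC\mbox{\scriptsize -}c}(\I\CC_{X_\infty}) \simto \BEC_{\CC\mbox{\scriptsize -}c}(\CC_{X_\infty}^\sub)$. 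Combining these observations with Lemma \ref{lem3.32}, I obtain a functorial isomorphism
\[
\Sol_{X_\infty}^{\rmE,\sub} \simeq \lambda_{X_\infty^\an}^\rmE \circ \Sol_{X_\infty}^{\rmE}
\]
as functors $\BDChol(\D_X)^{\op} \to \BEC_{\CC\mbox{\scriptsize -}c}(\CC_{X_\infty}^\sub)$. Since both factors on the right-hand side are equivalences of triangulated categories (by Theorems \ref{main3} and \ref{thm-Ito21} respectively), their composition is too, which gives the asserted equivalence.

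The two commutative diagrams are then essentially formal. The second diagram is the displayed isomorphism above, read off directly. For the first diagram, I would apply $I_{X_\infty^\an}^\rmE$ to Lemma \ref{lem3.32} and use the canonical isomorphism $\id \simto J_{X_\infty^\an}^\rmE \circ I_{X_\infty^\an}^\rmE$ of Theorem \ref{thm2.6}\,(1), together with the fact that $I_{X_\infty^\an}^\rmE$ and $\lambda_{X_\infty^\an}^\rmE$ are quasi-inverse on the $\I\RR$-constructible (and hence on the $\CC$-constructible) subcategory, to identify $I_{X_\infty^\an}^\rmE \circ \Sol_{X_\infty}^{\rmE,\sub}$ with $\Sol_{X_\infty}^{\rmE}$.

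The step that requires the most care, though it is not truly an obstacle, is confirming that the ambient isomorphism $J_{X_\infty^\an}^\rmE \simeq \lambda_{X_\infty^\an}^\rmE$ really is available on $\BEC_{\CC\mbox{\scriptsize -}c}(\I\CC_{X_\infty})$: this is guaranteed because $\CC$-constructibility implies $\I\RR$-constructibility (cf.\ the remark after Definition \ref{def-algconst}), and on $\BEC_{\I\RR\mbox{\scriptsize -}c}(\I\CC_{X_\infty})$ the functor $\lambda_{X_\infty^\an}^\rmE$ is by definition the inverse of $I_{X_\infty^\an}^\rmE$ given by Theorem \ref{thm2.6}\,(2), which coincides with the adjoint $J_{X_\infty^\an}^\rmE$ on that subcategory. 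Once this identification is in place, the rest of the argument is a direct assembly of the previously established equivalences.
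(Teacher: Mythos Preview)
Your proposal is correct and follows essentially the same approach as the paper: the paper's proof also cites Lemma~\ref{lem3.32}, Theorem~\ref{main3}, and \cite[Thm.\:3.11]{Ito21} (which is Theorem~\ref{thm-Ito21}) for each assertion. Your explicit verification that $J_{X_\infty^\an}^\rmE$ coincides with $\lambda_{X_\infty^\an}^\rmE$ on the $\CC$-constructible subcategory (via the inclusion $\BEC_{\CC\mbox{\scriptsize -}c}(\I\CC_{X_\infty}) \subset \BEC_{\RR\mbox{\scriptsize -}c}(\I\CC_{X_\infty^\an}) \subset \BEC_{\I\RR\mbox{\scriptsize -}c}(\I\CC_{X_\infty^\an})$) is a useful clarification that the paper leaves implicit.
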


\begin{proof}
The first assertion follows from Lemma \ref{lem3.32}, Theorem \ref{main3} and \cite[Thm.\:3.11]{Ito21}.
The second  assertion follows from Lemma \ref{lem3.32}, \cite[Thm.\:3.11]{Ito21} and Theorem \ref{main3}.
The third assertion follows from Lemma \ref{lem3.32} and Theorem \ref{main3}.
\end{proof}

\begin{corollary}
We have a commutative diagram:
\[\xymatrix@C=30pt@M=5pt{
\BDChol(\D_X)^{\op}\ar@{->}[r]^\sim\ar@<1.0ex>@{}[r]^-{\Sol_{X_\infty}^{\rmE,\sub}}
\ar@{}[rd]|{\rotatebox[origin=c]{180}{$\circlearrowright$}}
 & \BEC_{\CC-c}(\CC_{X_\infty}^\sub)\\
\BDCrh(\D_X)^{\op}\ar@{->}[r]_-{\Sol_X}^-{\sim}\ar@{}[u]|-{\bigcup}
&\BDC_{\CC-c}(\CC_X).
\ar@{^{(}->}[u]_-{e_{X_\infty}^\sub\circ \rho_{X_\infty\ast}}
}\]
\end{corollary}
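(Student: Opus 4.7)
The plan is to follow the proof of the analytic corollary, substituting algebraic ingredients. The key identity to establish is
$$\Sol_{X_\infty}^{\rmE,\sub}(\M) \simeq e_{X_\infty^\an}^\sub\big(\rho_{X_\infty^\an\ast}(\Sol_X(\M))\big), \qquad \M \in \BDCrh(\D_X),$$
where $\Sol_X(\M)$ is the classical constructible solution complex. The surrounding diagram data (the inclusions and the vertical embedding on the right) is already supplied by Proposition \ref{prop3.31}.

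First I would use Lemma \ref{lem3.32} to rewrite the left-hand side as $J_{X_\infty^\an}^\rmE(\Sol_{X_\infty}^{\rmE}(\M))$. Second, I would prove the algebraic analog of the regular-case identity $\Sol_X^\rmE(\M) \simeq e_X(\iota_X(\Sol_X(\M)))$, namely $\Sol_{X_\infty}^\rmE(\M) \simeq e_{X_\infty^\an}(\iota_{X_\infty^\an}(\Sol_X(\M)))$. Unwinding Theorem \ref{thm-Ito21}, this reduces to applying the analytic identity on $\tl{X}^\an$ to the analytification of $\bfD j_\ast \M$, which remains regular holonomic since $\bfD j_\ast$ preserves regular holonomicity for an open embedding into a smooth complete algebraic variety, and then restricting through $\bfE j_{X_\infty^\an}^{-1}$ using its standard compatibilities with $e$ and $\iota$. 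Third, I would combine the right commutative square of subsection \ref{2.11}, which yields $J^\rmE \circ e \simeq e^\sub \circ \bfR J$ on $\I\RR$-constructible objects, with \cite[Prop.\:3.24, Lem.\:3.40 (3)]{Ito21b} to identify $\bfR J_{X_\infty^\an} \circ \iota_{X_\infty^\an}$ with $\rho_{X_\infty^\an\ast}$ on $\CC$-constructible sheaves. Chaining these three isomorphisms produces the desired identity.

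The main obstacle is the middle step, the algebraic version of the regular-holonomic comparison. One must verify that $\bfD j_\ast$ maps $\BDCrh(\D_X)$ into $\BDCrh(\D_{\tl{X}})$ and that the analytic identity of \cite[Prop.\:9.1.3]{DK16} transports correctly through analytification and restriction along $\bfE j_{X_\infty^\an}^{-1}$; once this is in hand the remaining manipulations are formal consequences of subsection \ref{2.11}.
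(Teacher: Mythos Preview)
Your approach is correct and follows essentially the same route as the paper: both start from Lemma~\ref{lem3.32} to pass through $J_{X_\infty^\an}^\rmE\circ\Sol_{X_\infty}^\rmE$ and then invoke the compatibility results of \cite{Ito21b} (the paper cites Props.~3.7(3)(i),(4)(i) and 3.24 there, matching your third step). The middle step you single out --- the algebraic regular-holonomic comparison $\Sol_{X_\infty}^\rmE(\M)\simeq e_{X_\infty^\an}\big(\iota_{X_\infty^\an}(\Sol_X(\M))\big)$ --- is left implicit in the paper's terse proof, and your reduction via $\bfD j_\ast$ to the analytic identity on $\tl{X}^\an$ is the correct way to supply it.
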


\begin{proof}
This follows from Lemma \ref{lem3.32} and \cite[Props.\:3.7 (3)(i), (4)(i), 3.24]{Ito21b}.
\end{proof}

\end{document}